\title[Real extensions of distal minimal flows]{Real extensions of distal minimal flows and continuous topological ergodic decompositions}
\author{Gernot Greschonig}
\address{Institute of Discrete Mathematics and Geometry, Vienna University of Technology, Wiedner Hauptstra\ss e 8-10, A-1040 Wien, Austria}
\subjclass[2000]{Primary: 37B05, 37B20}
\keywords{Topological cocycles, distal minimal flows, decompositions}
\email{greschg@fastmail.net}
\newtheorem{theorem}{Theorem}[section]
\newtheorem*{strth*}{Structure theorem}
\newtheorem*{decth*}{Decomposition theorem}
\newtheorem*{tmath*}{Topological Mackey action}
\newtheorem*{cor*}{Corollary}
\newtheorem{lemma}[theorem]{Lemma}
\newtheorem{fact}[theorem]{Fact}
\newtheorem{proposition}[theorem]{Proposition}
\theoremstyle{definition}
\newtheorem{defi}[theorem]{Definition}
\theoremstyle{remark}
\newtheorem{remark}[theorem]{Remark}
\newtheorem{remarks}[theorem]{Remarks}
\newtheorem{remarks*}[]{Remark}
\newtheorem{example}[theorem]{Example}
\newcommand{\comment}[1]{}
\newcommand{\N}{\mathbb N}
\newcommand{\Z}{\mathbb Z}
\newcommand{\T}{\mathbb T}
\newcommand{\R}{\mathbb R}
\newcommand{\Q}{\mathbb Q}
\newcommand{\mc}{\mathcal}
\subjclass[2000]{37B05, 37B20}
\keywords{}
\begin{document}\allowdisplaybreaks\frenchspacing

\thanks{The author was supported by the FWF research grants J2622 and S9600, and by THE ISRAELI SCIENCE FOUNDATION grant No. 114/08}

\begin{abstract}
We prove a structure theorem for topologically recurrent real skew product extensions of distal minimal compact metric flows with a compactly generated Abelian acting group (e.g. $\Z^d$-flows and $\R^d$-flows).
The main result states that every such extension apart from a coboundary can be represented by a perturbation of a so-called Rokhlin skew product.
We obtain as a corollary that the topological ergodic decomposition of the skew product extension into prolongations is continuous and compact with respect to the Fell topology on the hyperspace.
The right translation acts minimally on this decomposition, therefore providing a minimal compact metric analogue to the Mackey action.
This topological Mackey action is a distal (possibly trivial) extension of a weakly mixing factor (possibly trivial), and it is distal if and only if perturbation of the Rokhlin skew product is defined by a topological coboundary.
\end{abstract}

\maketitle

\section{Introduction and main results}
The study of real-valued topological cocycles and real skew product extensions has been initiated by Besicovitch, Gottschalk, and Hedlund.
Besicovitch \cite{Be} proved the existence of point transitive real skew product extensions of an irrational rotation on the one-dimensional torus.
Furthermore, he proved that none of them is minimal, i.e. there are always non-transitive points for a point transitive real skew product extension.
The main result in Chapter 14 of \cite{G-H} can be rephrased to the dichotomy that a topologically conservative real skew product extension of a minimal rotation on a torus (finite or infinite dimensional) is either point transitive or it is defined by a topological coboundary and almost periodic.
This result and a generalisation to skew product extensions of a Kronecker transformation (cf. \cite{LM}) exploit the isometric behaviour of a minimal rotation.
A corresponding result apart from isometries is based on homotopy conditions for the class of distal minimal homeomorphisms usually called Furstenberg transformations (cf. \cite{Gr}).
However, in general this dichotomy is not valid, and counterexamples can be provided by the Rokhlin skew products of the so-called topological type $III_0$.
This motivates the study of topologically conservative real skew product extensions of compact flows apart from isometries and toral extensions, which is carried out in this note for \emph{distal} minimal flows with Abelian compactly generated acting groups.

Throughout this note we shall denote by $T$ a \emph{compactly generated Abelian} Hausdorff topological group acting continuously on a compact metric phase space $(X,d)$ so that $(X,T)$ is a \emph{compact metric flow}.
In the monograph \cite{G-H} such an acting group $T$ is called \emph{generative}, and notions of recurrence are provided for such Abelian acting groups apart from $\Z$ and $\R$.
For a $\Z$-action on $X$ we let $T$ be the self-homeomorphism of $X$ generating the action by $(n,x)\mapsto T^n x$, while in the case of a real flow we shall use the notation $\{\phi^t:t\in\R\}$ for the acting group.
We call a flow \emph{minimal} if the whole phase space is the only non-empty invariant closed subset, and then for every $x\in X$ the \emph{orbit closure} $\bar{\mc O}_T(x)=\overline{\{\tau x:\tau\in T\}}$ is all of $X$.
A flow $(X,T)$ is \emph{topologically transitive} if for arbitrary open neighbourhoods $\mc U,\mc V\subset X$ there exists some $\tau\in T$ with $\tau\mc U\cap\mc V\neq\emptyset$, and it is \emph{weakly mixing} if the flow $(X\times X,T)$ with the diagonal action is topologically transitive.
For a topologically transitive flow $(X,T)$ with complete separable metric phase space there exists a dense $G_\delta$-set of \emph{transitive points} $x$ with $\bar{\mc O}_T(x)=X$, and a flow with transitive points is \emph{point transitive}.
If $(X,T)$ and $(Y,T)$ are flows with the same acting group $T$ and $\pi:X\longrightarrow Y$ is a continuous \emph{onto} mapping with $\pi(\tau x)=\tau\pi(x)$ for every $\tau\in T$ and $x\in X$, then $(Y,T)=\pi(X,T)$ is called a factor of $(X,T)$ and $(X,T)$ is called an extension of $(Y,T)$.
Such a mapping $\pi$ is called a \emph{homomorphism} of the flows $(X,T)$ and $(Y,T)$.
The set of bicontinuous bijective homomorphisms of a flow $(X,T)$ onto itself is the topological group $\textup{Aut}(X,T)$ of \emph{automorphisms} of $(X,T)$ with the topology of uniform convergence.
Two points $x,y\in X$ are called \emph{distal} if
\begin{equation*}
\inf_{\tau\in T}d(\tau x,\tau y)> 0 ,
\end{equation*}
otherwise they are called \emph{proximal}.
For a general compact Hausdorff flow $(X,T)$ distality of two points $x,y\in X$ is defined by the absence of any nets $\{\tau_n\}_{n\in I}\subset T$ with $\lim \tau_n x=\lim \tau_n y$.
A flow is called distal if any two distinct points are distal, and an extension of flows is called distal if any two distinct points in the same fibre are distal.
An important property of distal compact flows is the \emph{partitioning} of the phase space into invariant closed minimal subsets, even if the flow is not minimal.

Suppose that $\mathbb A$ is an Abelian locally compact second countable (Abelian l.c.s.) group with zero element $\mathbf 0_\mathbb A$, and let $\mathbb A_\infty$ denote its one point compactification with the convention that $g+\infty=\infty+g=\infty$ for every $g\in \mathbb A$.
A cocycle of a compact metric flow $(X,T)$ is a continuous mapping $f:T\times X\longrightarrow \mathbb A$ with the identity
\begin{equation*}
f(\tau,\tau' x)+f(\tau',x)=f(\tau\tau',x)
\end{equation*}
for all $\tau,\tau'\in T$ and $x\in X$.
Given a compact metric $\Z$-flow $(X,T)$ and a continuous function $f:X\longrightarrow \mathbb A$, we can define a cocycle $f:\mathbb{Z}\times X \longrightarrow \mathbb A$ with $f(1,\cdot)\equiv f$ by
\begin{equation*}
f(n,x)=
\begin{cases}
\sum_{k=0}^{n-1}f(T^k x) & \textup{if}\enspace n\geq 1 ,
\\
\mathbf 0_\mathbb A & \textup{if}\enspace n=0 ,
\\
-f(-n,T^n x) & \textup{if}\enspace n < 0.
\end{cases}
\end{equation*}
Moreover, there is a natural occurrence of cocycles of $\R$-flows as solutions to ODE's.
Suppose that $(M,\{\phi^t:t\in\R\})$ is a smooth flow on a compact manifold $M$ and $A:M\longrightarrow\R$ is a smooth function.
Then a continuous real valued cocycle $f(t,m)$ of the flow $(M,\{\phi^t:t\in\R\})$ is given by the fundamental solution to the ODE
\begin{equation*}
\frac{d\, f(t,m)}{d\, t}=A(\phi^t(m))
\end{equation*}
with the initial condition $f(0,m)=0$.
The \emph{skew product extension} of the flow $(X,T)$ by a cocycle $f:T\times X\longrightarrow \mathbb A$ is defined by the homeomorphisms
\begin{equation*}
\widetilde\tau_f(x,a)=(\tau x, f(\tau,x)+ a)
\end{equation*}
of $X\times \mathbb A$ for all $\tau\in T$, which provide a continuous action $(\tau,x,a)\mapsto\widetilde\tau_f(x,a)$ of $T$ on $X\times \mathbb A$ by the cocycle identity.
For a $\Z$-flow $(X,T)$ this action is generated by
\begin{equation*}
\widetilde T_f(x,a)=(T x, f(x)+ a) .
\end{equation*}
The essential property of a skew product is that the $T$-action on $X\times\mathbb A$ commutes with the right translation action of the group $\mathbb A$ on $X\times\mathbb A$, which is defined by $$R_b(x,a)=(x,a-b)$$
 for every $b\in\mathbb A$.
The orbit closure of $(x,a)\in X\times\mathbb A$ under $\widetilde\tau_f$ will be denoted by $\bar{\mc O}_{T,f}(x,a)=\overline{\{\widetilde\tau_f(x,a):\tau\in T\}}$.

The \emph{prolongation} $\mc D_T(x)$ of $x\in X$ under the group action of $T$ is defined by
\begin{equation*}
\mc D_T(x)=\bigcap\{\bar{\mc O}_T(\mc U):\mc U\enspace\textup{is an open neighbourhood of}\enspace x\},
\end{equation*}
and we shall use the notation $\mc D_{T,f}(x,a)$ for the prolongation of a point $(x,a)\in X\times \mathbb A$ under the skew product action $\widetilde\tau_f$.

While the inclusion of the orbit closure in the prolongation is obvious, the coincidence of these sets is generic by a result from the paper \cite{Gl3}.
This result, one of our main tools, is usually referred to as ``topological ergodic decomposition''.

\begin{fact}\label{fact:o_p}
For every compact \emph{metric} flow $(X,T)$ there exists a $T$-invariant dense $G_\delta$ set $\mc F\subset X$ so that for every $x\in\mc F$ holds
\begin{equation*}
\bar{\mc O}_{T}(x)=\mc D_{T}(x) .
\end{equation*}
For a skew product extension $\widetilde\tau_f$ of $(X,T)$ by a cocycle $f:T\times X\longrightarrow \mathbb A$ there exists a $T$-invariant dense $G_\delta$ set $\mc F\subset X$ so that for every $x\in\mc F$ and \emph{every} $a\in \mathbb A$ holds
\begin{equation*}
\bar{\mc O}_{T,f}(x,a)=\mc D_{T,f}(x,a) .
\end{equation*}
This assertion holds as well for the extension of $\widetilde\tau_f$ to $X\times \mathbb A_\infty$ which is defined by $(x,\infty)\mapsto (\tau x,\infty)$ for every $x\in X$, and given an $\R^2$-valued topological cocycle $g=(g_1,g_2):T\times X\longrightarrow\R^2$ for the extension of $\widetilde\tau_g$ to $X\times(\R_\infty)^2$ which is defined by $(x,s,\infty)\mapsto (\tau x,s+g_1(x),\infty)$, $(x,\infty,t)\mapsto (\tau x,\infty,t+g_2(x))$, and $(x,\infty,\infty)\mapsto (\tau x,\infty,\infty)$, for every $x\in X$ and $s,t\in\R$.
\end{fact}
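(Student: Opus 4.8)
The plan is to realise both the orbit-closure and the prolongation as set-valued maps into the hyperspace $2^X$ of non-empty closed subsets of $X$, equipped with the Hausdorff metric --- a compact metric space since $(X,d)$ is --- and to deduce the coincidence on a residual set from semicontinuity together with a Baire category argument. First I would check that $O\colon x\mapsto\bar{\mc O}_T(x)$ is \emph{lower semicontinuous}: if an open set $V$ meets $\bar{\mc O}_T(x_0)$ it meets the dense orbit $\{\tau x_0:\tau\in T\}$, and then $\tau x\in V$, hence $V\cap\bar{\mc O}_T(x)\neq\emptyset$, for all $x$ near $x_0$ by continuity of the action. Since $T$ is a group, $\bar{\mc O}_T(\tau x)=\bar{\mc O}_T(x)$ and each $\tau$ acts as a homeomorphism, so $O$ is constant along orbits and its set $\mc F$ of continuity points (in the Hausdorff metric) is automatically $T$-invariant. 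By a standard Baire category argument (Fort's theorem on continuity points of semicontinuous set-valued maps into a compact metric space), $\mc F$ is then a dense $G_\delta$ in the Baire space $X$.

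Next I would verify $\bar{\mc O}_T(x)=\mc D_T(x)$ for $x\in\mc F$. Only the inclusion $\mc D_T(x)\subseteq\bar{\mc O}_T(x)$ needs an argument: given $y\in\mc D_T(x)$, applying the definition of the prolongation to the $1/n$-balls around $x$ produces points $x_n\to x$ and elements $\tau_n\in T$ with $\tau_n x_n\to y$; since $\tau_n x_n\in\bar{\mc O}_T(x_n)$ and $\bar{\mc O}_T(x_n)\to\bar{\mc O}_T(x)$ in the Hausdorff metric by continuity of $O$ at $x$, the limit $y$ lies in the closed set $\bar{\mc O}_T(x)$.

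For the skew-product statements I would pass to the compact metric space $X\times\mathbb A_\infty$ (as $\mathbb A$ is locally compact second countable, $\mathbb A_\infty$ is compact metrizable), with the extended $T$-action given by $\widetilde\tau_f$ on $X\times\mathbb A$ and by $(x,\infty)\mapsto(\tau x,\infty)$; this is continuous because the continuous image $\{f(\tau,x):x\in X\}$ is compact, so a single $\widetilde\tau_f$ cannot push finite second coordinates towards $\infty$. The first part, applied to $(X\times\mathbb A_\infty,T)$, yields a dense $G_\delta$ set $\mc G$ of continuity points of $(x,a)\mapsto\bar{\mc O}_{T,f}(x,a)$, and $\mc G$ is invariant under every right translation $R_b$ (a homeomorphism of $X\times\mathbb A_\infty$ fixing $X\times\{\infty\}$ and commuting with the flow) as well as under the flow itself. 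By the Kuratowski--Ulam theorem the fibre $\mc G_x=\{a:(x,a)\in\mc G\}$ is comeager in $\mathbb A_\infty$ for a comeager set of $x$; but $\mc G_x$ is invariant under all translations $a\mapsto a-b$, hence is one of $\emptyset$, $\{\infty\}$, $\mathbb A$, $\mathbb A_\infty$, and only the last two are comeager since $\mathbb A$ is open and non-empty. Therefore $\mc F:=\{x:\{x\}\times\mathbb A\subseteq\mc G\}$ --- which by $R_b$-invariance of $\mc G$ is exactly the slice $\{x:(x,\mathbf 0_\mathbb A)\in\mc G\}$, hence a $G_\delta$ in $X$ --- is comeager, so a dense $G_\delta$; it is $T$-invariant because $(\tau x,\mathbf 0_\mathbb A)=\widetilde\tau_f(x,-f(\tau,x))\in\mc G$ whenever $x\in\mc F$. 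For $x\in\mc F$ and every $a\in\mathbb A$ we then have $(x,a)\in\mc G$, giving $\bar{\mc O}_{T,f}(x,a)=\mc D_{T,f}(x,a)$ in $X\times\mathbb A_\infty$, and intersecting with the open $T$-invariant subset $X\times\mathbb A$ recovers the equality computed inside $X\times\mathbb A$. The $\R^2$-valued case is identical, with $X\times(\R_\infty)^2$ in place of $X\times\mathbb A_\infty$, the right $\R^2$-translations, and Kuratowski--Ulam applied in two extra coordinates.

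I expect the semicontinuity and limit arguments to be routine; the delicate step is the passage from a residual set of good points in the product to a residual set of base points over which the \emph{entire} fibre is good. This is exactly where the Kuratowski--Ulam theorem must be combined with the right-translation symmetry of the skew product, and where one has to check that adjoining the point at infinity does no harm --- it does not, since $\{\infty\}$ is nowhere dense in $\mathbb A_\infty$ and is fixed by every $R_b$, and since compactness of $X$ is precisely what keeps the extended action on $X\times\mathbb A_\infty$ continuous.
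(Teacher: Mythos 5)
Your proof is correct and its overall strategy is the same as the paper's: pass to the one-point compactification $X\times\mathbb A_\infty$, apply the base-case statement to the compact metric flow there, and transfer along the right-translation symmetry $R_b$ to get a $T$-invariant $G_\delta$ slice in $X$ over which the whole $\mathbb A$-fibre is good. What you do differently is essentially to make the two links of the chain self-contained: the paper simply cites Theorem~1 of Akin--Glasner for the statement $\bar{\mc O}_T=\mc D_T$ on a dense $G_\delta$, while you reprove it by observing that $x\mapsto\bar{\mc O}_T(x)$ is lower semicontinuous into the compact hyperspace $2^X$, that its set of continuity points is the set of good points, and that this set is a dense $G_\delta$ by Fort's theorem; and the paper leaves the passage from a residual set of good points in $X\times\mathbb A_\infty$ to a residual, $T$-invariant set of base points entirely implicit (one terse sentence about right-translations), while you carry it out explicitly via the Kuratowski--Ulam theorem combined with the $R_b$-saturation of $\mc G$. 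Both fill-ins are correct and, if anything, repair a gap in the paper's very terse argument: your identification of $\mc F$ as the $G_\delta$ slice $\{x:(x,\mathbf 0_\mathbb A)\in\mc G\}$ and your verification of its $T$-invariance via $(\tau x,\mathbf 0_\mathbb A)=\widetilde\tau_f(x,-f(\tau,x))$ is exactly what is needed and is not written out in the paper. The only point worth flagging is cosmetic: you establish equality for $a\in\mathbb A$ over $x\in\mc F$, whereas the paper's sentence claims the coincidence propagates also to $a'=\infty$; that extra claim is not needed for the theorem as used, and your restriction to finite $a$ is harmless.
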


\begin{proof}
The statement for a compact metric phase space and a general acting group is according to Theorem 1 of \cite{AkGl}.
The other statements can be verified by means of the extension of $\widetilde\tau_f$ onto the compactification of $X\times\mathbb A$.
The coincidence of $\bar{\mc O}_{T,f}(x,a)$ and $\mc D_{T,f}(x,a)$ for some $(x,a)\in X\times \mathbb A$ implies this coincidence for all $(x,a')\in\{x\}\times \mathbb A_\infty$, since the extension of $\widetilde\tau_f$ to $X\times\mathbb A_\infty$ commutes with the right translation on $X\times \mathbb A_\infty$.
\end{proof}

\begin{remark}\label{rem:o_p}
If $y\in\bar{\mc O}_{T}(x)$ and $z\in\bar{\mc O}_{T}(y)$, then $z\in\bar{\mc O}_{T}(x)$ follows by a diagonalisation argument.
A corresponding statement for prolongations is not valid, however follows from $x\in\bar{\mc O}_{T}(y)$ and $z\in\mc D_{T}(y)$ that $z\in\mc D_{T}(x)$.
\end{remark}

We shall consider more general Abelian acting groups than $\Z$ and $\R$, hence the definition of recurrence requires the notions of a replete semigroup and an extensive subset of the Abelian compactly generated group $T$ (cf. \cite{G-H}).
We recall that a semigroup $P\subset T$ is replete if for every compact subset $K\subset T$ there exists a $\tau\in T$ with $\tau K\subset P$, and a subset $E\subset T$ is extensive if it intersects every replete semigroup.
Therefore, a subset $E$ of $T=\Z$ or $T=\R$ is extensive if and only if $E$ contains arbitrarily large positive \emph{and} arbitrarily large negative elements.

\begin{defi}
We call a cocycle $f(\tau,x)$ of a minimal compact metric flow $(X,T)$ \emph{topologically recurrent} if for arbitrary neighbourhoods $\mc U\subset X$ and $U(\mathbf 0_\mathbb A)\subset \mathbb A$ of $\mathbf 0_\mathbb A$ there exists an extensive set of elements $\tau\in T$ with
\begin{equation*}
\mc U\cap\tau^{-1}(\mc U)\cap\{x\in X: f(\tau,x)\in U(\mathbf 0_\mathbb A)\}\neq\emptyset .
\end{equation*}
Since $\widetilde\tau_f$ and the right translation on $X\times\mathbb A$ commute, this is equivalent to the \emph{regional recurrence} of the skew product action $\widetilde\tau_f$ on $X\times\mathbb A$, i.e. for every open neighbourhood $U\subset X\times \mathbb A$ there exists an extensive set of elements $\tau\in T$ with $\widetilde\tau_f(U)\cap U\neq\emptyset$.
A non-recurrent cocycle is called \emph{transient}.

A point $(x,a)\in X\times \mathbb A$ is $\widetilde\tau_f$-recurrent if for every neighbourhood $U\subset X\times \mathbb A$ of $(x,a)$ the set of $\tau\in T$ with $\widetilde\tau_f(x,a)\in U$ is extensive.
Moreover, a point $(x,a)\in X\times\mathbb A$ is \emph{regionally} $\widetilde\tau_f$-recurrent if for every neighbourhood $U$ of $(x,a)$ the set of $\tau\in T$ with $\widetilde\tau_f(U)\cap U\neq\emptyset$ is extensive.
\end{defi}

\begin{remarks}\label{rems:rec}
If $f(\tau,x)$ is recurrent, then by Theorems 7.15 and 7.16 in \cite{G-H} there exists a dense $G_\delta$ set of $\widetilde\tau_f$-recurrent points in $X\times\R$.

Given a regionally $\widetilde\tau_f$-recurrent point $(x,a)\in X\times\mathbb A$, every point in $\{x\}\times\mathbb A$ is regionally $\widetilde\tau_f$-recurrent.
The minimality of $(X,T)$ and Theorem 7.13 in \cite{G-H} imply that every point in $X\times \mathbb A$ is regionally $\widetilde\tau_f$-recurrent, hence $f(\tau,x)$ is recurrent.

A cocycle $f(n,x)$ of a $\Z$-flow is topologically recurrent if and only if $\widetilde T_f$ is topologically conservative, i.e. for every open neighbourhood $U\subset X\times\mathbb A$ there exists an integer $n\neq 0$ so that $\widetilde T_f^n(U)\cap U\neq\emptyset$.
\end{remarks}

One of the most important concepts in the study of cocycles is the essential range, originally introduced in the measure theoretic category by Schmidt \cite{Sch}.

\begin{defi}\label{def:er}
Let $f(\tau,x)$ be a cocycle of a minimal compact metric flow $(X,T)$.
An element $a\in \mathbb A$ is in the set $E(f)$ of \emph{topological essential values} if for arbitrary neighbourhoods $\mc U\subset X$ and $U(a)\subset \mathbb A$ of $a$ there exists an element $\tau\in T$ so that
\begin{equation*}
\mc U\cap\tau^{-1}(\mc U)\cap\{x\in X: f(\tau,x)\in U(a)\}
\end{equation*}
is non-empty.
The set $E(f)$ is also called the \emph{topological essential range}.
The cocycle identity implies that $f(\mathbf 1_T,x)=\mathbf 0_\mathbb A$ for all $x\in X$ and hence $\mathbf 0_\mathbb A\in E(f)$.
Moreover, the essential range is always a closed \emph{subgroup} of $\mathbb A$ (cf. \cite{LM}, Proposition 3.1, which carries over from the case of a minimal $\Z$-action to a general Abelian group acting minimally).
\end{defi}

\begin{fact}\label{fact:er}
If $f(\tau,x)$ is a cocycle with full topological essential range $E(f)=\mathbb A$, then $\mc D_{T,f}(x,a)\subset\{x\}\times\mathbb A$ holds for every $(x,a)\in X\times\mathbb A$.
By Fact \ref{fact:o_p} there exists a $T$-invariant dense $G_\delta$ set $\mc F\subset X$ with $\{x\}\times \mathbb A\subset\bar{\mc O}_{T,f}(x,a)$ for every $(x,a)\in\mc F\times \mathbb A$.
For every $\tau\in T$ follows that $\{\tau x\}\times \mathbb A\subset\bar{\mc O}_{T,f}(x,g)$, and by the minimality of the flow $(X,T)$ every $(x,a)\in\mc F\times \mathbb A$ is a transitive point for $\widetilde\tau_f$.
\end{fact}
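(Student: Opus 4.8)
The plan is to verify the three assertions in order; only the first one carries content, the other two being formal consequences of it together with Fact~\ref{fact:o_p} and the minimality of $(X,T)$. For the first assertion I would fix $(x,a)\in X\times\mathbb A$ and an arbitrary $b\in\mathbb A$ and deduce $(x,b)\in\mc D_{T,f}(x,a)$ straight from the definition of the prolongation, so that $\{x\}\times\mathbb A\subset\mc D_{T,f}(x,a)$. Let $\mc U\subset X$ be a neighbourhood of $x$ and $U(a),U(b)\subset\mathbb A$ neighbourhoods of $a$ and $b$; it suffices to produce $\tau\in T$ and a point $(x',a')\in\mc U\times U(a)$ with $\widetilde\tau_f(x',a')\in\mc U\times U(b)$. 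Now $U(b)-a$ is a neighbourhood of $b-a$, and since $b-a\in E(f)=\mathbb A$, Definition~\ref{def:er} applied to the neighbourhood $\mc U$ and to $U(b)-a$ yields $\tau\in T$ and $x'\in\mc U\cap\tau^{-1}\mc U$ with $f(\tau,x')\in U(b)-a$. Then $(x',a)\in\mc U\times U(a)$ and $\widetilde\tau_f(x',a)=(\tau x',f(\tau,x')+a)\in\mc U\times U(b)$, as required; hence $(x,b)\in\mc D_{T,f}(x,a)$, and letting $b$ range over $\mathbb A$ gives the claim.

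For the remaining assertions, Fact~\ref{fact:o_p} supplies a $T$-invariant dense $G_\delta$ set $\mc F\subset X$ with $\bar{\mc O}_{T,f}(x,a)=\mc D_{T,f}(x,a)$ for all $x\in\mc F$ and all $a\in\mathbb A$, so the first step immediately gives $\{x\}\times\mathbb A\subset\bar{\mc O}_{T,f}(x,a)$ for $(x,a)\in\mc F\times\mathbb A$. Since the orbit $\mc O_{T,f}(x,a)$, and hence its closure, is invariant under every $\widetilde\tau_f$, and $\widetilde\tau_f(\{x\}\times\mathbb A)=\{\tau x\}\times\mathbb A$, it follows that $\{\tau x\}\times\mathbb A\subset\bar{\mc O}_{T,f}(x,a)$ for every $\tau\in T$; by the minimality of $(X,T)$ the set $\{\tau x:\tau\in T\}$ is dense in $X$, so passing to the closure gives $\bar{\mc O}_{T,f}(x,a)=X\times\mathbb A$, that is, every $(x,a)\in\mc F\times\mathbb A$ is a transitive point of $\widetilde\tau_f$.

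I do not expect a genuine obstacle: the argument is little more than an unwinding of the definition of the topological essential range. The one point worth noting, though an easy one, is that a single $\tau\in T$ for which $f(\tau,x')$ lies close to the prescribed value $b-a$ already carries a point near $(x,a)$ to a point near $(x,b)$, so that no iteration is needed and no compactness of $\mathbb A$ is used; what is left is the routine bookkeeping with product neighbourhoods in $X\times\mathbb A$ together with the repeatedly invoked $\widetilde\tau_f$-invariance of the orbit closures of a skew product.
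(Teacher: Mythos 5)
Your proof is correct and follows the paper's intended argument; the Fact is stated without a separate proof environment, with the three assertions themselves serving as the proof sketch, and you unwind them exactly as intended. One point worth flagging: the first assertion as printed reads $\mc D_{T,f}(x,a)\subset\{x\}\times\mathbb A$, but this inclusion cannot hold, since the prolongation always contains the orbit closure, which visits $\{\tau x\}\times\mathbb A$ for every $\tau$; what you prove, and what the rest of the Fact actually uses (via Fact \ref{fact:o_p} to pass from $\mc D_{T,f}$ to $\bar{\mc O}_{T,f}$), is the reverse inclusion $\{x\}\times\mathbb A\subset\mc D_{T,f}(x,a)$, so the paper has the inclusion reversed and you have silently corrected it.
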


Throughout this note we shall use a notion of ``relative'' triviality of cocycles.

\begin{defi}\label{def:rnt}
Let $f_1(\tau,x)$ and $f_2(\tau,x)$ be $\R$-valued cocycles of a minimal compact metric flow $(X,T)$.
We shall call the cocycle $f_2(\tau,x)$ \emph{relatively trivial} with respect to $f_1(\tau,x)$, if for every sequence $\{(\tau_k,x_k)\}_{k\geq 1}\subset T\times X$ with $d(x_k, \tau_k x_k)\to 0$ and $f_1(\tau_k,x_k)\to 0$ it holds also that $f_2(\tau_k,x_k)\to 0$ as $k\to\infty$.
For a sequence $\{\tau_k\}_{k\geq 1}\subset T$ and a point $\bar x\in X$ so that $\tau_k\bar x$ and $f_1(\tau_k,\bar x)$ are convergent, this implies that also $f_2(\tau_k,\bar x)$ is convergent.
\end{defi}

By the following lemma it suffices to verify an essential value condition ``locally''.

\begin{lemma}\label{lem:trans}
Let $(X,T)$ be a minimal compact metric flow with an \emph{Abelian} group $T$ acting, and let $f(\tau,x)$ be a cocycle of $(X,T)$ with values in an Abelian l.c.s. group $\mathbb A$.
If there exists a sequence $\{(\tau_k,x_k)\}_{k\geq 1}\subset T\times X$ with $d(x_k, \tau_k x_k)\to 0$ and $f(\tau_k,x_k)\to a\in \mathbb A_\infty$ ($\R_\infty\times\R_\infty$ for $\mathbb A=\R^2$, respectively) as $k\to\infty$, then for every $x\in X$ it holds that $(x,a)\in\mc D_{T,f}(x,\mathbf 0_\mathbb A)$.
Hence if $a\in \mathbb A$ is finite, then $a\in E(f)$.

Now let $g=(g_1,g_2):T\times X\longrightarrow\R^2$ be a cocycle of the flow $(X,T)$ with a sequence $\{(\tau_k,x_k)\}_{k\geq 1}\subset T\times X$ so that $d(x_k,\tau_k x_k)\to 0$, $g_1(\tau_k,x_k)\to 0$, and $g_2(\tau_k,x_k)\nrightarrow 0$ as $k\to\infty$.
Then there exist a point $\bar x\in X$ and a sequence $\{\bar\tau_k\}_{k\geq 1}\subset T$ so that $d(\bar x,\bar\tau_k \bar x)\to 0$ and $g(\bar\tau_k,\bar x)\to(0,\infty)$ as $k\to\infty$.
Moreover, for an extension $(Y,T)$ of $(X,T)=\pi(Y,T)$ there exists a sequence $\{(\tilde\tau_k,y_k)\}_{k\geq 1}\subset T\times Y$ with $d_Y(y_k,\tilde\tau_k y_k)\to 0$ and $(g\circ\pi)(\tilde\tau_k,y_k)\to(0,\infty)$ as $k\to\infty$.
\end{lemma}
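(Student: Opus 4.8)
The plan is to prove the three assertions of the lemma in turn, the first feeding the second and both feeding the third.

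\emph{First assertion.} Passing to a subsequence I may assume $x_k\to x_0$ in the compact space $X$; then $\tau_k x_k\to x_0$ as well, so in $X\times\mathbb A_\infty$ we have $(x_k,\mathbf 0_\mathbb A)\to(x_0,\mathbf 0_\mathbb A)$ while $\widetilde{\tau_k}_f(x_k,\mathbf 0_\mathbb A)=(\tau_k x_k,f(\tau_k,x_k))\to(x_0,a)$. Since every neighbourhood of $(x_0,\mathbf 0_\mathbb A)$ eventually contains $(x_k,\mathbf 0_\mathbb A)$, the definition of the prolongation gives $(x_0,a)\in\mc D_{T,f}(x_0,\mathbf 0_\mathbb A)$. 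Because $T$ is Abelian each $\widetilde\rho_f$ with $\rho\in T$ commutes with the whole action $\widetilde\tau_f$, and so does each right translation $R_b$; hence both are bicontinuous automorphisms carrying prolongations to prolongations, and applying first $\widetilde\rho_f$ and then $R_{f(\rho,x_0)}$ to the membership just obtained yields $(\rho x_0,a)\in\mc D_{T,f}(\rho x_0,\mathbf 0_\mathbb A)$ for every $\rho\in T$. Now $q\in\mc D_{T,f}(p)$ holds precisely when for all open $\mc U\ni p$ and $\mc W\ni q$ there is $\tau\in T$ with $\widetilde\tau_f(\mc U)\cap\mc W\neq\emptyset$, so the relation $\{(p,q):q\in\mc D_{T,f}(p)\}$ is closed on the compact space $X\times\mathbb A_\infty$; as $\{\rho x_0:\rho\in T\}$ is dense by minimality, choosing $\rho_n x_0\to x$ gives $(x,a)\in\mc D_{T,f}(x,\mathbf 0_\mathbb A)$ for every $x\in X$. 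If $a\in\mathbb A$, unwinding this membership against small neighbourhoods of $\mathbf 0_\mathbb A$ is exactly the defining condition for $a\in E(f)$. The argument for the $\R^2$-cocycle is identical with $\mathbb A_\infty$ replaced by $\R_\infty\times\R_\infty$.

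\emph{Second assertion.} Pass to a subsequence along which $g_1(\tau_k,x_k)\to 0$ and $g_2(\tau_k,x_k)\to s_0$ in $\R_\infty$ with $s_0\neq 0$, possibly $s_0=\infty$. Then $d(x_k,\tau_k x_k)\to 0$ and $g(\tau_k,x_k)\to(0,s_0)$, so by the first assertion $(x,(0,s_0))\in\mc D_{T,g}(x,(0,0))$ for every $x\in X$, prolongations being taken in $X\times\R_\infty\times\R_\infty$. Choose $\bar x$ in the dense $G_\delta$ set provided by Fact \ref{fact:o_p} for the extension of $\widetilde\tau_g$ to $X\times\R_\infty\times\R_\infty$, so that $\mc O:=\bar{\mc O}_{T,g}(\bar x,(0,0))$ coincides with $\mc D_{T,g}(\bar x,(0,0))$; hence $(\bar x,(0,s_0))\in\mc O$. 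If $s_0=\infty$ we extract $\bar\tau_k$ with $\widetilde{\bar\tau_k}_g(\bar x,(0,0))=(\bar\tau_k\bar x,g(\bar\tau_k,\bar x))\to(\bar x,(0,\infty))$, i.e. $d(\bar x,\bar\tau_k\bar x)\to 0$ and $g(\bar\tau_k,\bar x)\to(0,\infty)$, and we are done. If $s_0$ is finite, fix $\sigma_k$ with $\widetilde{\sigma_k}_g(\bar x,(0,0))\to(\bar x,(0,s_0))$ and show by induction that $(\bar x,(0,ns_0))\in\mc O$ for all $n\geq 1$: given this for $n$, pick $\rho_k$ with $\widetilde{\rho_k}_g(\bar x,(0,0))\to(\bar x,(0,ns_0))$; since $T$ is Abelian and $\widetilde{\sigma_j}_g$ is continuous, the orbit points $\widetilde{\sigma_j\rho_k}_g(\bar x,(0,0))=\widetilde{\sigma_j}_g\bigl(\widetilde{\rho_k}_g(\bar x,(0,0))\bigr)$ tend, as $k\to\infty$, to $(\sigma_j\bar x,g(\sigma_j,\bar x)+(0,ns_0))\in\mc O$, and letting $j\to\infty$ these tend to $(\bar x,(0,(n+1)s_0))$, which lies in $\mc O$ by closedness. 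As $ns_0\to\infty$ in $\R_\infty$, closedness of $\mc O$ gives $(\bar x,(0,\infty))\in\mc O$, and we finish as before. Transporting this membership by the homeomorphisms $\widetilde\rho_g$ and $R_{g(\rho,\bar x)}$ and using the closed graph of the prolongation also yields $(x,(0,\infty))\in\mc D_{T,g}(x,(0,0))$ for every $x\in X$.

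\emph{Third assertion.} Replacing $Y$ by a minimal subset — whose image is a nonempty closed invariant subset of the minimal $X$, hence all of $X$ — I may assume $(Y,T)$ is itself minimal, and it suffices to produce the sequence there. Consider the factor map $\Pi=\pi\times\mathrm{id}$ of $(Y\times\R_\infty^2,\widetilde\tau_{g\circ\pi})$ onto $(X\times\R_\infty^2,\widetilde\tau_g)$: it is equivariant, preserves the $\R_\infty^2$-coordinate, and carries orbit closures onto orbit closures. Taking $\bar x$ generic for $\widetilde\tau_g$ as in the second assertion and any $\bar y\in\pi^{-1}(\bar x)$, we have $(\bar x,(0,\infty))\in\bar{\mc O}_{T,g}(\bar x,(0,0))=\Pi\bigl(\bar{\mc O}_{T,g\circ\pi}(\bar y,(0,0))\bigr)$, so $(y^\ast,(0,\infty))\in\bar{\mc O}_{T,g\circ\pi}(\bar y,(0,0))$ for some $y^\ast\in\pi^{-1}(\bar x)$. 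Choose $\sigma_k\in T$ with $\widetilde{\sigma_k}_{g\circ\pi}(\bar y,(0,0))=(\sigma_k\bar y,g(\sigma_k,\bar x))\to(y^\ast,(0,\infty))$, so $\sigma_k\bar y\to y^\ast$, $g_1(\sigma_k,\bar x)\to 0$ and $|g_2(\sigma_k,\bar x)|\to\infty$. After a subsequence we may assume $g_2(\sigma_k,\bar x)\to+\infty$ (the other case being symmetric), and then thin to $k_1<k_2<\cdots$ with $g_2(\sigma_{k_{j+1}},\bar x)-g_2(\sigma_{k_j},\bar x)>j$. Set $y_j:=\sigma_{k_j}\bar y$ and $\tilde\tau_j:=\sigma_{k_{j+1}}\sigma_{k_j}^{-1}$. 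Then $\tilde\tau_j y_j=\sigma_{k_{j+1}}\bar y=y_{j+1}$, so $d_Y(y_j,\tilde\tau_j y_j)=d_Y(\sigma_{k_j}\bar y,\sigma_{k_{j+1}}\bar y)\to 0$ (both tend to $y^\ast$), and by the cocycle identity $(g\circ\pi)(\tilde\tau_j,y_j)=g(\sigma_{k_{j+1}},\bar x)-g(\sigma_{k_j},\bar x)$, whose first coordinate tends to $0$ and whose second exceeds $j$; hence $(g\circ\pi)(\tilde\tau_j,y_j)\to(0,\infty)$, as required.

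\emph{Main obstacle.} I expect the crux of the whole argument to be the iteration step in the second assertion. Prolongations do not compose in general (Remark \ref{rem:o_p}), so one cannot merely ``add'' essential values; passing to a point $\bar x$ where the prolongation coincides with an orbit closure (Fact \ref{fact:o_p}) and then amplifying a finite value $s_0$ up to $\infty$ by repeatedly composing near-recurrences there is what makes the proof go through, and checking that each step of this amplification stays inside the fixed orbit closure — rather than merely in the prolongation — is the delicate point on which everything rests.
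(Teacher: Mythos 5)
Your proof is correct, and the overall skeleton (first assertion \(\Rightarrow\) second \(\Rightarrow\) third) matches the paper's, but your second assertion follows a genuinely different route, so it is worth comparing.

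For the first assertion the paper argues directly for an arbitrary \(x\in X\): fixing neighbourhoods \(\mc U\ni x\) and \(U(a)\), it chooses \(\tau\) with \(\tau x'\in\mc U\) (where \(x_k\to x'\)) and uses the cocycle identity with Abelianness to show \(f(\tau_k,\tau x_k)\to a\) while \(\tau x_k,\tau_k\tau x_k\in\mc U\). You instead first establish \((x_0,a)\in\mc D_{T,f}(x_0,\mathbf 0)\) for the limit point, then transport this along the dense orbit via the automorphisms \(\widetilde\rho_f\) and \(R_{f(\rho,x_0)}\), and finish with the closedness of the prolongation relation on the compact space \(X\times\mathbb A_\infty\). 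This is a correct repackaging; the same Abelian commuting of \(\widetilde\rho_f\) with the action and the cocycle identity are doing the work in both versions.

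The real divergence is in the second assertion. The paper takes a subsequential limit \((0,c)\) with \(c\in\R\setminus\{0\}\), observes via the first part that \((0,c)\in E(g)\), then uses that \(E(g)\) is a closed \emph{subgroup} of \(\R^2\) (cited from \cite{LM}, Proposition 3.1) to manufacture a new sequence with \(g_1\to 0\) and \(|g_2|\to\infty\), and applies the first assertion once more with \(a=(0,\infty)\). You avoid the subgroup fact entirely and amplify \((0,s_0)\) up to \((0,\infty)\) inside the fixed closed set \(\mc O=\bar{\mc O}_{T,g}(\bar x,(0,0))=\mc D_{T,g}(\bar x,(0,0))\), using the nested-limit argument \(\widetilde{\sigma_j\rho_k}_g=\widetilde{\sigma_j}_g\circ\widetilde{\rho_k}_g\) and the closedness of \(\mc O\) in \(X\times(\R_\infty)^2\). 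Your ``Main obstacle'' remark correctly identifies the delicacy: prolongations do not compose (Remark \ref{rem:o_p}), so the amplification must happen inside the orbit closure of a point where \(\bar{\mc O}=\mc D\), and your execution of that step is sound. What you gain is self-containment; what the paper gains is brevity by leaning on the established closed-subgroup structure of \(E(g)\). The third assertion is essentially the paper's argument (diagonal subsequence of \(\bar\tau_k\bar y\) with divergent increments of \(g_2\)); your reduction to a minimal subset of \(Y\) is harmless but unnecessary, since compactness of \(Y\) alone already yields the convergent subsequence.

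One cosmetic note: in the amplification step you invoke Abelianness for the identity \(\widetilde{\sigma_j\rho_k}_g=\widetilde{\sigma_j}_g\circ\widetilde{\rho_k}_g\); this is just the action property and needs no commutativity. Abelianness is only needed where \(\widetilde\rho_g\) must be an automorphism of the skew product action, as in your first assertion.
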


\begin{proof}
We let $\{(\tau_k,x_k)\}_{k\geq 1}\subset T\times X$ be a sequence with the properties above, and we may assume that $x_k\to x'\in X$ as $k\to\infty$.
For arbitrary neighbourhoods $\mc U\subset X$ and $U(a)$ of $a\in \mathbb A_\infty$ we can fix an element $\tau\in T$ with $\tau x'\in\mc U$, and since the group $T$ is Abelian it holds that $\tau x_k\to\tau x'$ and $\tau_k\tau x_k=\tau\tau_k x_k\to \tau x'$ as $k\to\infty$.
From the cocycle identity and the continuity of $f(\tau,\cdot)$ follows
\begin{eqnarray*}
f(\tau_k,\tau x_k) & = & f(\tau,\tau_k x_k) + f(\tau_k,x_k)+f(\tau^{-1},\tau x_k)\\
& = & f(\tau,\tau_k x_k)+ f(\tau_k,x_k)-f(\tau, x_k) \to a
\end{eqnarray*}
as $k\to\infty$, and for all $k$ large enough it holds that $\tau x_k$, $\tau_k\tau x_k\in\mc U$ and $f(\tau_k,\tau x_k)\in U(a)$.
Since the neighbourhoods $\mc U$ and $U(a)$ were arbitrary, we have $(x,a)\in\mc D_{T,f}(x,\mathbf 0_\mathbb A)$ for every $x\in X$ and $a\in E(f)$ if $a\neq\infty$.

If $g(\tau_k,x_k)\nrightarrow(0,\infty)$ as $k\to\infty$, then $E(g)$ has an element $(0,c)$ with $c\in\R\setminus\{0\}$.
Since $E(g)$ is a closed subspace of $\R^2$, we can start over with a sequence $\{(\tau_k,x_k)\}_{k\geq 1}\subset T\times X$ so that $d(x_k,\tau_k x_k)\to 0$, $g_1(\tau_k,x_k)\to 0$, and $|g_2(\tau_k,x_k)|\to\infty$ as $k\to\infty$.
The statement above implies that $(x,0,\infty)\in\mc D_{T,g}(x,0,0)$ for every $x\in X$, and by Fact \ref{fact:o_p} we can select $\bar x\in X$ and a sequence $\{\bar\tau_k\}_{k\geq 1}\subset T$ so that $\bar\tau_k\bar x\to\bar x$ and $g(\bar\tau_k,\bar x)\to(0,\infty)$.
For an arbitrary point $\bar y\in\pi^{-1}(\bar x)$ we can select an increasing sequence of positive integers $\{k_l\}_{l\geq 1}$ with $d_Y(\bar\tau_{k_{l+1}} \bar y,\bar\tau_{k_l}\bar y)\to 0$ and $(g\circ\sigma)(\bar\tau_{k_{l+1}}(\bar\tau_{k_l})^{-1},\bar\tau_{k_l}\bar y)\to(0,\infty)$ and put $\{(\tilde\tau_l,y_l)=(\bar\tau_{k_{l+1}}(\bar\tau_{k_l})^{-1},\bar\tau_{k_l}\bar y)\}_{l\geq 1}$.
\end{proof}

\begin{defi}
Let $f(\tau,x)$ be a cocycle of a minimal compact metric flow $(X,T)$ with values in an Abelian l.c.s. group $\mathbb A$, and let $b:X\longrightarrow \mathbb A$ be a continuous function.
Another cocycle of the flow $(X,T)$ can be defined by the $\mathbb A$-valued function
\begin{equation*}
g(\tau,x)=f(\tau,x)+b(\tau x)-b(x).
\end{equation*}
The cocycle $g(\tau,x)$ is called \emph{topologically cohomologous} to the cocycle $f(\tau,x)$ with the \emph{transfer function} $b(x)$.
A cocycle $g(\tau,x)=b(\tau x)-b(x)$ topologically cohomologous to zero is bounded on $T\times X$ and called a \emph{topological coboundary}.
\end{defi}

The Gottschalk-Hedlund theorem (\cite{G-H}, Theorem 14.11) characterises topological coboundaries of a minimal $\Z$-action as cocycles bounded on at least one semi-orbit.
The generalisation to an Abelian group $T$ acting minimally is natural.

\begin{fact}\label{fact:GH}
A real valued topological cocycle $f(\tau,x)$ of a minimal compact metric flow $(X,T)$ with an Abelian acting group $T$ is a coboundary if and only if there exists a point $\bar x\in X$ so that the function $\tau\mapsto f(\tau,\bar x)$ is bounded on $T$.
For the groups $T=\Z$ and $T=\R$ acting, the boundedness on a semi-orbit is sufficient.

A real valued cocycle $f(\tau,x)$ is also a topological coboundary if for every sequence $\{(\tau_k,x_k)\}_{k\geq 1}\subset T\times X$ with $d(x_k,\tau_k x_k)\to 0$ the set $\{f(\tau_k,x_k)\}_{k\geq 1}\subset\R$ is bounded.
\end{fact}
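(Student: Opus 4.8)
The plan is to reduce both assertions to the compactness of a single orbit closure in the skew product $X\times\R$. One direction of the first assertion is immediate: if $f(\tau,x)=b(\tau x)-b(x)$ with $b\in C(X)$, then $|f(\tau,\bar x)|\le 2\|b\|_\infty$ for every $\bar x\in X$ and $\tau\in T$, since $X$ is compact. For the converse I would assume that $\tau\mapsto f(\tau,\bar x)$ is bounded on $T$, say $|f(\tau,\bar x)|\le C$; in the case $T=\Z$ or $T=\R$ assume only that it is bounded on a semi-orbit of $\bar x$. Then $\widetilde\tau_f(\bar x,0)$ remains in $X\times[-C,C]$ along (this part of) its orbit, so the closure of that orbit is a compact subset of $X\times\R$. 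If the boundedness holds on all of $T$, set $N:=\bar{\mc O}_{T,f}(\bar x,0)$, which is $\widetilde\tau_f$-invariant; in the semi-orbit case the closure $K^{+}$ of the forward orbit of $(\bar x,0)$ is only forward invariant, and I let $N$ be a minimal --- hence $\widetilde\tau_f$-invariant --- subset of $K^{+}$. Since the projection $\pi:X\times\R\to X$ is a homomorphism onto the minimal flow $(X,T)$, the compact invariant set $\pi(N)$ equals $X$, so the fibre $N_x:=\{s\in\R:(x,s)\in N\}$ is a nonempty compact subset of $\R$ for every $x\in X$.

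The key point will be that $N$ need not be a graph, but its fibrewise diameter is constant. The map $\widetilde\tau_f$ carries the fibre over $x$ onto the fibre over $\tau x$ by the translation $s\mapsto s+f(\tau,x)$, and $N$ is invariant; hence $N_{\tau x}=f(\tau,x)+N_x$ and the function $\delta(x):=\operatorname{diam}(N_x)$ is $T$-invariant. Moreover $\delta$ is upper semicontinuous, since $N$ is compact. Therefore $\{x\in X:\delta(x)\ge\max_X\delta\}$ is a nonempty closed $T$-invariant subset of the minimal flow $(X,T)$, so it is all of $X$ and $\delta\equiv\delta_0$ is constant. Put $\psi(x):=\min N_x$. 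Extracting convergent sequences in the compact set $N$ shows that $\psi$ is lower semicontinuous, while $x\mapsto\max N_x=\psi(x)+\delta_0$ is upper semicontinuous; hence $\psi$ is continuous. Finally $\psi(\tau x)=\min\bigl(f(\tau,x)+N_x\bigr)=f(\tau,x)+\psi(x)$, i.e. $f(\tau,x)=\psi(\tau x)-\psi(x)$, so $f$ is a topological coboundary with transfer function $\psi\in C(X)$.

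For the last assertion I would argue by contraposition. Suppose $f$ is not a coboundary; then by the first part $\tau\mapsto f(\tau,\bar x)$ is unbounded on $T$ for every $\bar x\in X$. Fix $\bar x$ and choose $\tau_k\in T$ with $|f(\tau_k,\bar x)|\to\infty$. Passing to a subsequence I may assume $\tau_k\bar x\to x^{*}$ for some $x^{*}\in X$, and then, passing to a further subsequence and possibly replacing $f$ by $-f$, that $f(\tau_{k+1},\bar x)-f(\tau_k,\bar x)\ge k$ for all $k$. Set $\sigma_k:=\tau_{k+1}\tau_k^{-1}$ and $y_k:=\tau_k\bar x$. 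Then $\sigma_k y_k=\tau_{k+1}\bar x$, so $d(y_k,\sigma_k y_k)=d(\tau_k\bar x,\tau_{k+1}\bar x)\to 0$ because both points converge to $x^{*}$, whereas the cocycle identity gives $f(\sigma_k,y_k)=f(\tau_{k+1},\bar x)-f(\tau_k,\bar x)\ge k\to\infty$. Thus $\{f(\sigma_k,y_k)\}_{k\ge1}$ is unbounded although $d(y_k,\sigma_k y_k)\to 0$, contradicting the hypothesis; hence $f$ is a coboundary.

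I expect the genuinely delicate points to be the semi-orbit case --- where the forward orbit closure is only forward invariant, forcing the passage to a minimal (hence fully invariant) subset before the fibre-diameter computation applies --- and the coordination of the two semicontinuity properties of $\psi$, which is exactly where constancy of $\delta$ is used.
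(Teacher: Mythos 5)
Your proof is correct, and for the main ``if and only if'' it takes a genuinely different route from the paper. The paper's argument first observes (via the cocycle identity and density of the orbit of $\bar x$) that $f$ is bounded on $T\times X$, hence the topological essential range $E(f)$ is trivial, and then invokes Lemma~\ref{lem:trans} to show that each fibre $\{x\}\times\R\cap\bar{\mc O}_{T,f}(\bar x,0)$ is a \emph{singleton}, so the orbit closure is the graph of a continuous transfer function. Your argument replaces the essential-range machinery with a fibre-diameter argument: you do not need to know a priori that the fibre is a singleton, only that the diameter $\delta(x)=\operatorname{diam}(N_x)$ is $T$-invariant, upper semicontinuous, and therefore constant by minimality of $(X,T)$, after which the two semicontinuity properties of $\psi(x)=\min N_x$ (lower semicontinuity from compactness of $N$, upper semicontinuity from $\max N_x=\psi(x)+\delta_0$) force continuity of $\psi$. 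This is a more self-contained and elementary route that bypasses Lemma~\ref{lem:trans} and the fact that $E(f)$ is a subgroup. In the semi-orbit case the paper works with the $\omega$-limit set of $(\bar x,0)$ in $X\times\R$, which is already fully $\widetilde\tau_f$-invariant; you instead pass to a minimal closed fully invariant subset of the forward orbit closure, which amounts to the same thing (such a set exists precisely because the $\omega$-limit set is nonempty and fully invariant). For the last assertion your contrapositive argument is the same in spirit as the paper's but slightly leaner: you take $\sigma_k=\tau_{k+1}\tau_k^{-1}$ and $y_k=\tau_k\bar x$, both converging to a cluster point $x^*$, and read off $f(\sigma_k,y_k)=f(\tau_{k+1},\bar x)-f(\tau_k,\bar x)\to\infty$ from the cocycle identity, whereas the paper additionally uses minimality to steer the sequence back to $\bar x$ before diagonalising. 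Both give a sequence $\{(\tau_k,x_k)\}$ with $d(x_k,\tau_k x_k)\to 0$ and $\{f(\tau_k,x_k)\}$ unbounded, as required.
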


\begin{proof}
Suppose that $\tau\mapsto f(\tau,\bar x)$ is bounded on $T$.
By the cocycle identity holds
\begin{equation*}
f(\tau,\tau'\bar x)=f(\tau\tau',\bar x)-f(\tau',\bar x)
\end{equation*}
for all $\tau,\tau'\in T$, and by the density of the $T$-orbit of $\bar x$ follows the boundedness of $f(\tau,x)$ on $T\times X$ and thus the triviality of the subgroup $E(f)=\{0\}$.
By the density of the $T$-orbit of $\bar x$ and the boundedness of $\tau\mapsto f(\tau,\bar x)$, the intersection $\{x\}\times\R\cap\bar{\mc O}_{T,f}(\bar x,0)$ is non-empty for every $x\in X$.
For every $x\in X$ this intersection is a singleton, since otherwise Lemma \ref{lem:trans} proves a non-zero element in $E(f)$.
Hence the compact set $\bar{\mc O}_{T,f}(\bar x,0)$ is the graph of a continuous function $b:X\longrightarrow\R$ with $f(\tau,\bar x)=b(\tau\bar x)$, and thus $f(\tau,x)=b(\tau x)-b(x)$ holds for every $(\tau,x)\in T\times X$.
For $T=\Z$ and $T=\R$ the set of \emph{limit points} of a semi-orbit is a $T$-invariant closed subset of $X$, which is non-empty by compactness and equal to $X$ by minimality.
We can conclude the proof as above, but using the semi-orbit.

Now suppose that $f(\tau,x)$ is not a topological coboundary and let $\bar x\in X$ be arbitrary.
Then there exists a sequence $\{\tau'_l\}_{l\geq 1}\subset T$ with $|f(\tau'_l,\bar x)|\to\infty$, and we may assume that $\tau'_l \bar x\to x'$ as $l\to\infty$.
Since $(X,T)$ is minimal, there exists sequence $\{\tau''_k\}_{k\geq 1}\subset T$ with $\tau''_k x'\to\bar x$ as $k\to\infty$.
A diagonalisation with a sufficiently increasing sequence of positive integers $\{l_k\}_{l\geq 1}$ yields for $\tau_k=\tau_k''\tau_{l_k}'$ that $\tau_k\bar x\to\bar x$ and $|f(\tau_k,\bar x)|=|f(\tau_k'',\tau_{l_k}'\bar x)+f(\tau_{l_k}',\bar x)|\to\infty$ as $k\to\infty$.
\end{proof}

The following lemma appeared originally in the paper \cite{A} in a setting for $\R^d$-valued cocycles of a minimal rotation on a torus.

\begin{lemma}\label{lem:at}
Let $f(\tau,x)$ be a real valued topological cocycle of a minimal compact metric flow $(X,T)$ with an Abelian acting group $T$.
If the skew product action $\widetilde\tau_f$ is \emph{not} point transitive on $X\times\R$, then for every neighbourhood $U\subset\R$ of $0$ there exist a compact symmetric neighbourhood $K\subset U$ of $0$ and an $\varepsilon>0$ so that for every $\tau\in T$ holds
\begin{equation}\label{eq:s_l}
\{x\in X:d(x,\tau x)<\varepsilon\enspace\textup{and}\enspace f(\tau,x)\in 2K\setminus K^0\}=\emptyset .
\end{equation}
\end{lemma}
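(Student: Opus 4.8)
The plan is to reduce the statement to the structure of the topological essential range $E(f)$ and then finish with a short compactness argument. First I would note that if $\widetilde\tau_f$ is not point transitive on $X\times\R$, then necessarily $E(f)\neq\R$: this is the contrapositive of Fact~\ref{fact:er}, which produces transitive points as soon as the essential range is full. Since $E(f)$ is a closed subgroup of $\R$ by Definition~\ref{def:er}, it must therefore be $\{0\}$ or $c\Z$ for some $c>0$; in either case it is discrete, so one can fix $\rho>0$ with $E(f)\cap[-\rho,\rho]=\{0\}$.

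Given a neighbourhood $U$ of $0$, I would then choose $r>0$ with $[-r,r]\subseteq U$ and $2r\le\rho$, and set $K=[-r,r]$. This is a compact symmetric neighbourhood of $0$ contained in $U$, and the shell $2K\setminus K^0=[-2r,-r]\cup[r,2r]$ is a compact subset of $[-\rho,\rho]\setminus\{0\}$, hence disjoint from $E(f)$.

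To produce the $\varepsilon$, I would argue by contradiction. If no $\varepsilon>0$ makes \eqref{eq:s_l} hold for this $K$, then for each $n\ge1$ there are $\tau_n\in T$ and $x_n\in X$ with $d(x_n,\tau_n x_n)<1/n$ and $f(\tau_n,x_n)\in 2K\setminus K^0$. Since the shell is compact, after passing to a subsequence $f(\tau_n,x_n)\to a$ with $a\in 2K\setminus K^0$, so $a$ is a nonzero real number. Now $d(x_n,\tau_n x_n)\to 0$ and $f(\tau_n,x_n)\to a\in\R$, so Lemma~\ref{lem:trans} yields $a\in E(f)$, contradicting the disjointness established above. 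Hence some $\varepsilon>0$ works, which is precisely \eqref{eq:s_l}.

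The argument is short, and the one step carrying the real content is the first reduction: recognising via Fact~\ref{fact:er} that failure of point transitivity forces $E(f)$ to be a proper --- hence discrete --- closed subgroup of $\R$, so that a sufficiently small shell $2K\setminus K^0$ around $0$ avoids $E(f)$ entirely. The rest is routine bookkeeping; the only points to watch are that the shell is bounded, so the limit $a$ fed into Lemma~\ref{lem:trans} is finite (no escape to $\infty$), and that the negation of \eqref{eq:s_l} for the fixed $K$ already supplies a single sequence of pairs $(\tau_n,x_n)$ ranging over all $\tau\in T$, which is exactly the input Lemma~\ref{lem:trans} requires.
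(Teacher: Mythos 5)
Your proof is correct and follows essentially the same route as the paper's: invoke Fact~\ref{fact:er} to deduce that $E(f)$ is a proper (hence discrete) closed subgroup of $\R$, choose a compact symmetric $K\subset U$ so that $2K\setminus K^0$ misses $E(f)$, and then derive a contradiction via Lemma~\ref{lem:trans} from the assumed failure of \eqref{eq:s_l}. You have merely spelled out two details the paper leaves implicit, namely the classification of proper closed subgroups of $\R$ and the extraction of a convergent subsequence by compactness of the shell.
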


\begin{proof}
Suppose that $f(\tau,x)$ is real valued and $\widetilde\tau_f$ is not point transitive.
By Fact \ref{fact:er} the essential range $E(f)$ is a proper closed subgroup of $\R$, and thus there exists a compact symmetric neighbourhood $K\subset U$ of $0$ with $(2K\setminus K^0)\cap E(f)=\emptyset$.
If the assertion is false for the neighbourhood $K$, then there exists a sequence $\{(\tau_k,x_k)\}_{k\geq 1}\subset T\times X$ with $d(x_k, \tau_k x_k)\to 0$ and $f(\tau_k,x_k)\to t\in 2K\setminus K^0$.
Now Lemma \ref{lem:trans} implies $t\in E(f)\cap 2K\setminus K^0$, in contradiction to the choice of $K$.
\end{proof}

We shall commence the study of cocycles of distal minimal flows by the generalisation of the results for minimal rotations in \cite{G-H} and \cite{LM}.

\begin{proposition}\label{prop:isom}
Let $(X,T)$ be a minimal compact \emph{isometric} flow with a compactly generated Abelian acting group $T$, and let $f(\tau,x)$ be a topologically recurrent real valued cocycle of $(X,T)$.
Then the cocycle $f(\tau,x)$ is either a coboundary or its skew product extension $\widetilde \tau_f$ is point transitive on $X\times\R$.
\end{proposition}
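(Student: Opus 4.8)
The plan is to prove the dichotomy in contrapositive form: assuming that $\widetilde\tau_f$ is \emph{not} point transitive on $X\times\R$, I will show that $f$ is a topological coboundary. By the second criterion of Fact~\ref{fact:GH} it suffices to exhibit an $\varepsilon>0$ for which $\{f(\tau,x):\tau\in T,\ x\in X,\ d(x,\tau x)<\varepsilon\}$ is a bounded subset of $\R$.

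Since $\widetilde\tau_f$ is not point transitive, Fact~\ref{fact:er} shows that $E(f)$ is a proper closed subgroup of $\R$, and Lemma~\ref{lem:at}, applied to a bounded neighbourhood of $0$, yields a symmetric \emph{interval} $K=[-r,r]$ and an $\varepsilon>0$ for which \eqref{eq:s_l} holds; thus no pair $(\tau,x)$ with $d(x,\tau x)<\varepsilon$ has $|f(\tau,x)|\in[r,2r]$, so the absolute values of $f$ along $\varepsilon$-near-returns lie in $[0,r)\cup(2r,\infty)$. The isometry of the flow enters through a \emph{doubling} observation: if $d(x,\tau x)<\varepsilon/2$ and $|f(\tau,x)|,|f(\tau,\tau x)|<r$, then $|f(\tau^{2},x)|<2r$, while $d(x,\tau^{2}x)\le d(x,\tau x)+d(\tau x,\tau^{2}x)=2\,d(x,\tau x)<\varepsilon$ (using $d(\tau x,\tau^{2}x)=d(x,\tau x)$), so \eqref{eq:s_l} upgrades this to $|f(\tau^{2},x)|<r$. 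Iterating, and more generally concatenating near-returns that stay within $\varepsilon$, a value once in $K^{0}$ remains in $K^{0}$, whereas a value exceeding $2r$ can never be pushed below $2r$.

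It remains to rule out the existence of an $\varepsilon$-near-return $(\tau_{0},x_{0})$ with $|f(\tau_{0},x_{0})|>2r$; this is the crux. Here I would use that a minimal compact isometric flow is uniformly almost periodic: letting $G$ be the closure of $\{\tau|_{X}:\tau\in T\}$ in the (compact, abelian) isometry group of $X$, so that $X\cong G/H$ for the stabiliser $H$ of a point and $T$ is dense in $G$, for every $\delta>0$ the set $T_{\delta}=\{\tau\in T:\sup_{x\in X}d(x,\tau x)<\delta\}$ is syndetic, and for $\delta<\varepsilon$ and $\tau\in T_{\delta}$ the continuous function $f(\tau,\cdot)$ avoids the annulus $\{t:r\le|t|\le 2r\}$ on all of $X$ by \eqref{eq:s_l}. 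Topological recurrence, on the other hand, only provides \emph{pointwise} small returns --- elements $\tau$ together with a single point $x$ at which $d(x,\tau x)$ and $|f(\tau,x)|$ are small --- so the heart of the argument is to leverage the homogeneity of $X\cong G/H$ and the density of $T$ in $G$ to transport such pointwise data into uniform control, obtaining $\sup_{x\in X}|f(\sigma,x)|<r$ for a syndetic family of $\sigma\in T_{\delta}$; once this is available, writing an arbitrary $\tau\in T$ as $\tau=\tau_{i}\sigma$ with $\sigma$ in this family and $\tau_{i}$ ranging over the finitely many syndeticity translates bounds $|f(\tau,x)|$ by $r+\max_{i}\|f(\tau_{i},\cdot)\|_{\infty}$ uniformly, and alternatively a bad near-return $(\tau_{0},x_{0})$ would be contradicted by concatenating such uniform small returns with $\tau_{0}$ --- staying $\varepsilon$-close to a fixed target by the isometry, keeping the running value inside $K^{0}$ by the doubling step --- so as to force an $\varepsilon$-near-return whose $f$-value lies in $[r,2r]$, against \eqref{eq:s_l}. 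Hence $\{f(\tau,x):d(x,\tau x)<\varepsilon\}$ is bounded, and Fact~\ref{fact:GH} gives that $f$ is a coboundary.

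The main obstacle is precisely this passage from the pointwise small returns supplied by topological recurrence to uniform control of $f(\tau,\cdot)$ over $X$; the isometric hypothesis, through uniform almost periodicity and the homogeneity of $X$, is the tool meant to bridge it, and I expect the coordination of the syndeticity constants of the sets $T_{\delta}$ with the moduli of continuity of the individual functions $f(\tau,\cdot)$ to be the principal technical burden. I would also note the classical organisation following \cite{G-H} and \cite{LM}: when $E(f)=c\Z$ with $c>0$, one reduces $f$ modulo $c$ to a cocycle into the compact group $\R/c\Z$, observes that the corresponding skew product is a distal extension of $(X,T)$ and hence (being a distal compact flow) partitions into continuous graphs, concludes from the triviality of the essential range via Lemma~\ref{lem:trans} that $f\bmod c$ is a coboundary, and is then left with a residual $c\Z$-valued recurrent cocycle --- but this last step runs into the very same pointwise-to-uniform difficulty, now for a cocycle into a discrete group.
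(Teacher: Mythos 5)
Your overall setup mirrors the paper's: invoke Lemma~\ref{lem:at} to obtain the annulus gap~\eqref{eq:s_l} with constants $K=[-r,r]$ and $\varepsilon$, observe that the isometry preserves $\varepsilon$-near-returns along orbits, and aim for a contradiction between a non-coboundary cocycle and recurrence. The ``doubling'' computation is correct. However, you yourself flag the decisive step --- ``the passage from the pointwise small returns supplied by topological recurrence to uniform control of $f(\tau,\cdot)$ over $X$'' --- as an unsolved ``principal technical burden,'' and the surrounding sketch (syndeticity of $T_\delta$, concatenating uniform small returns against a bad near-return) is circular: you need uniform smallness of $f(\sigma,\cdot)$ for $\sigma\in T_\delta$ to run the concatenation, but that uniform smallness is precisely what you have not established. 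Topological recurrence gives only one point $x$ per $\tau$ at which $f(\tau,x)$ is small, and the isometry does not transfer the \emph{value} of $f$ from $x$ to other points (cocycles are not equivariant in that sense). So as written there is a genuine gap.

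The paper's mechanism is the opposite and more economical. Instead of trying to propagate smallness, it fixes a single bad pair $(\bar\tau,\bar x)$ with $d(\bar x,\bar\tau\bar x)<\varepsilon$ and $f(\bar\tau,\bar x)\notin 2K$ (its existence is immediate from Fact~\ref{fact:GH} when $f$ is not a coboundary). It then shrinks $\varepsilon$ so that a fixed compact generative set $L\subset T$ perturbs $f$-values by only $K^0$, and uses the cocycle identity
\begin{equation*}
f(\bar\tau,\tau'\bar x)=f(\bar\tau,\bar x)-f(\tau',\bar x)+f(\tau',\bar\tau\bar x)
\end{equation*}
together with the isometry to show $f(\bar\tau,\tau'\bar x)\notin 2K$ and with the same sign, for all $\tau'\in L$. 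Transfinitely along the $L$-generated orbit of $\bar x$ (dense by minimality) and by continuity of $f(\bar\tau,\cdot)$, one gets $|f(\bar\tau,x)|>2r$ with a fixed sign for all $x\in X$, whence $|f(\bar\tau^k,x)|>|k|c$, a replete semigroup $P$ on which $|f|>c/2$ uniformly, and finally a contradiction to the dense $G_\delta$ of $\widetilde\tau_f$-recurrent points guaranteed by Remarks~\ref{rems:rec}. This is the piece you need: the cocycle identity together with the isometry and a generative set transports the \emph{large} value (not the small one) along a dense orbit, and it is the replete-semigroup construction, not a pointwise-to-uniform passage via syndeticity of $T_\delta$, that finally brings topological recurrence to bear.

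One further logical point: your stated plan is to show boundedness of $\{f(\tau,x):d(x,\tau x)<\varepsilon\}$ directly from non-transitivity of $\widetilde\tau_f$, but this cannot be true without invoking recurrence --- a transient cocycle such as $f\equiv 1$ over an irrational rotation has a non-transitive skew product yet $f(n,x)=n$ is unbounded along near-returns. Recurrence must enter as an essential ingredient, not merely an afterthought, and in the paper's proof it enters exactly once, to kill the replete semigroup $P$.
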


\begin{proof}
Suppose that the cocycle $f(\tau,x)$ is not a coboundary and $\widetilde \tau_f$ is not point transitive.
Then by Lemma \ref{lem:at} there exist a compact symmetric neighbourhood $K$ of $0$ and an $\varepsilon>0$ so that equality (\ref{eq:s_l}) holds for every $\tau\in T$.
Furthermore, if $L\subset T$ is a compact generative subset, then $\varepsilon>0$ can be chosen small enough so that for all $\tau'\in L$ and $x,x'\in X$ with $d(x,x')<\varepsilon$ it holds that
\begin{equation*}
f(\tau',x)-f(\tau',x')\in K^0 .
\end{equation*}
By Fact \ref{fact:GH} we can fix a pair $(\bar\tau,\bar x)\in T\times X$ with $d(\bar x,\bar\tau\bar x)<\varepsilon$ and $f(\bar\tau,\bar x)\notin 2K$, since $f(\tau,x)$ is not a coboundary.
The Abelian group $T$ acts on $X$ isometrically, and thus $d(\bar x,\bar\tau\bar x)<\varepsilon$ implies that $d(\tau'\bar x,\bar\tau\tau' \bar x)=d(\tau'\bar x,\tau'\bar\tau\bar x)<\varepsilon$.
Together with equality (\ref{eq:s_l}) we can conclude for every $\tau'\in L$ that
\begin{equation*}
f(\bar\tau,\tau'\bar x)=f(\bar\tau,\bar x)-f(\tau',\bar x)+f(\tau',\bar\tau\bar x)\notin 2K ,
\end{equation*}
and hence both of the real numbers $f(\bar\tau,\bar x)$ and $f(\bar\tau,\tau'\bar x)$ are elements of the one and the same of the disjoint sets $\R^+\setminus 2K$ and $\R^-\setminus 2K$.
Since the set $L$ is generative in the Abelian group $T$ acting minimally on $X$, it follows by induction that $f(\bar\tau, x)$ is in the closure of one of the sets $\R^+\setminus 2K$ and $\R^-\setminus 2K$ for every $x\in X$.
Thus we have a constant $c>0$ with $|f(\bar\tau^k,x)|> |k| c$ for every integer $k$, and we define a subset $P\subset T$ by
\begin{equation*}
P=\cup_{k\geq 1}\bar\tau^k\cdot\{\tau\in T:f(\tau,\cdot)<|k| c/2\} .
\end{equation*}
Given two integers $k,k'\geq 1$ and $\bar\tau^k\tau$, $\bar\tau^{k'}\tau'\in P$ with $f(\tau,\cdot)<|k| c/2$ and $f(\tau',\cdot)<|k'| c/2$, we can conclude that $\bar\tau^k\tau\bar\tau^{k'}\tau'=\bar\tau^{k+k'}(\tau\tau')$ with $f(\tau\tau',\cdot)<|k+k'| c/2$, hence $P$ is a semigroup.
Moreover, the semigroup $P$ contains a translate of every compact set $L\subset T$, since for large enough $k\geq 1$ the inequality $f(\tau,x)<|k| c/2$ holds for every $\tau\in L$ and every $x\in X$.
Therefore $P$ is a replete semigroup in $T$ so that $|f(\tau,x)|>c/2$ holds for every $(\tau,x)\in P\times X$, which contradicts the existence of a dense $G_\delta$ set of $\widetilde\tau_f$-recurrent points (cf. Remarks \ref{rems:rec}).
\end{proof}

The \emph{Rokhlin extensions} and the \emph{Rokhlin skew products} have been studied in the measure theoretic setting in \cite{LL} and \cite{LP}.
We shall introduce the notion of a \emph{perturbed Rokhlin skew product}, which will be inevitable in our main result.

\begin{defi}
Suppose that $(X,T)$ is a distal minimal compact metric flow and $(M,\{\phi^t:t\in\R\})$ is a distal minimal compact metric $\R$-flow.
Let $f:T\times X\longrightarrow\R$ be a cocycle of $(X,T)$ with a point transitive skew product $\widetilde\tau_f$ on $X\times\R$.
We define the \emph{Rokhlin extension} $\tau_{\phi,f}$ on $X\times M$ by
\begin{equation*}
\tau_{\phi,f}(x,m)=(\tau x,\phi^{f(\tau,x)}(m)) ,
\end{equation*}
which is an action of the group $T$ on $X\times M$ due to the cocycle identity for $f(\tau,x)$.
If $(\bar x,0)$ is a transitive point for $\widetilde\tau_f$, then by the minimality of $(M,\{\phi^t:t\in\R\})$ every point $(\bar x,m)$ with $m\in M$ is a transitive point for $(X\times M,T)$.
Since the flow $(X\times M,T)$ is distal by the distality of its components, it is even minimal.

The skew product extension of $(X\times M,T)$ by the cocycle $(\tau,x,m)\mapsto f(\tau,x)$ is the \emph{Rokhlin skew product} $\widetilde\tau_{\phi,f}$ on $X\times M\times \R$ with
\begin{equation*}
\widetilde\tau_{\phi,f}(x,m,t)=(\tau x,\phi^{f(\tau,x)}(m),t+f(\tau,x)) .
\end{equation*}
Let $g:\R\times M\longrightarrow\R$ be a cocycle of the flow $(M,\{\phi^t:t\in\R\})$.
The $\R$-valued map
\begin{equation*}
(\tau,x,m)\mapsto f(\tau,x)+g(f(\tau,x),m)
\end{equation*}
defined on $T\times X\times M$ turns out to be a cocycle of the flow $(X\times M,T)$ due to the cocycle identity for $g(t,m)$.
The skew product extension of this cocycle with
\begin{equation*}
\widetilde\tau_{\phi,f,g}(x,m,t)=(\tau x,\phi^{f(\tau,x)}(m),t+f(\tau,x)+g(f(\tau,x),m)) .
\end{equation*}
is called a \emph{perturbed Rokhlin skew product} $\widetilde\tau_{\phi,f,g}$ on $X\times M\times\R$.
\end{defi}

We shall present at first the basic example of a topological Rokhlin skew product of topological type $III_0$, i.e. recurrent with a trivial topological essential range but not a topological coboundary.

\begin{example}\label{ex:zi}
Let $f:\T\longrightarrow\R$ be a continuous function with a point transitive skew product extension $\widetilde T_f$ of the irrational rotation $T$ by $\alpha$ on the torus, and let $\beta\in(0,1)$ be irrational so that the $\R$-flow $(\T^2,\{\phi^t:t\in\R\})$ defined by $\phi^t(y,z)=(y+t,z+\beta t)$ is minimal and distal.
The minimal and distal Rokhlin extension $T_{\phi,f}$ on $\T^3$ is
\begin{equation*}
T_{\phi,f}(x,y,z)=(x+\alpha,y+f(x),z+\beta f(x)),
\end{equation*}
and putting $h(x,y,z)=f(x)$ for all $(x,y,z)\in\T^3$ gives a topological type $III_0$ cocycle $h(n,(x,y,z))$ of the homeomorphism $T_{\phi,f}$ with the skew product extension $\widetilde T_{\phi,f}$.
Indeed, since $\widetilde T_f$ is point transitive, the cocycle $h(n,(x,y,z))$ is recurrent, but it is not bounded and therefore no topological coboundary.
Furthermore, a sequence $\{t_n\}_{n\geq 1}\subset\R$ with $t_n\mod 1\to 0$ and $(\beta t_n)\mod 1\to 0$ cannot have a finite cluster point apart from zero, and hence $E(h)=\{0\}$.
For a point $\bar x\in\T$ so that $(\bar x,0)\in\T\times\R$ is transitive under $\widetilde T_f$ and arbitrary $y,z\in\T$ the orbit closure of $(\bar x,y,z,0)$ under the skew product extension of $T_{\phi,f}$ by $h$ is of the form
\begin{equation*}
\bar{\mc O}_{\widetilde T_{\phi,f}}((\bar x,y,z),0)=\bar{\mc O}_{T_{\phi,f},h}((\bar x,y,z),0)=\T\times\{(\phi^t(y,z),t)\in\T^2\times\R:t\in\R\} .
\end{equation*}
The collection of these sets is a partition of $\T^3\times\R$ into $\widetilde T_{\phi,f}$-orbit closures.
\end{example}

The next example makes clear that the perturbation of a Rokhlin skew product by a cocycle is an essential component, which in general cannot be eliminated by continuous cohomology.

\begin{example}\label{ex:pe}
Let $T$, $f$, $h$, and $\{\phi^t:t\in\R\}$ be defined as in Example \ref{ex:zi}, and suppose that $g(t,(y,z))$ is a point transitive $\R$-valued cocycle of the flow $\{\phi^t:t\in\R\}$.
From the unique ergodicity of the flow $\{\phi^t:t\in\R\}$ follows $\int_{\T^2} g(t,(y,z)) d\lambda(y,z)=0$ for every $t\in\R$, and after rescaling $g$ we can assume that $|g(t,(y,z))|<|t|/2$ for every $t\in\R$ and $(y,z)\in\T^2$.
We define a function
\begin{equation*}
\bar h(x,y,z)= f(x)+g(f(x),(y,z))
\end{equation*}
so that the cocycle of $T_{\phi,f}$ is $\bar h(n,(x,y,z))= f(n,x)+g(f(n,x),(y,z))$ for every $n$.
Since the perturbation $g(f(n,x),(y,z))$ is unbounded, there cannot be a continuous transfer function defined on $\T^3$ so that $\bar h$ and $h$ are cohomologous.
However, due to the condition $|g(t,(y,z))|<|t|/2$ the set
\begin{equation*}
\T\times\{(\phi^t(y,z),t+g(t,(y,z)))\in\T^2\times\R:t\in\R\}
\end{equation*}
is closed, and it coincides with $\bar{\mc O}_{\widetilde T_{\phi,f,g}}((\bar x,y,z),0)$ if the point $(\bar x,0)\in\T\times\R$ is transitive under $\widetilde T_f$.
Thus the structure of the orbit closures is preserved as well as these sets provide a partition of $\T^3\times\R$.
\end{example}

\begin{remark}
The structure of Example \ref{ex:zi} can be revealed from the toral extensions of $T_{\phi,f}$ by the function $(\gamma h) \mod 1$ for all $\gamma\in\R$.
This distal homeomorphism of $\T^4$ is transitive and hence minimal for rationally independent $1$, $\beta$, and $\gamma$.
However, for $\gamma=1$ and $\gamma=\beta$ the orbit closures collapse to graphs representing the dependence of $h$ and the action on the coordinates of the torus.
The same approach will not be successful with respect to Example \ref{ex:pe}.
It can be verified that for every $\gamma\in\R$ the toral extension of $T_{\phi,f}$ by the function $(\gamma\bar h) \mod 1$ is minimal on $\T^4$.
\end{remark}

The main result of this note puts these examples into a structure theorem.

\begin{strth*}
Suppose that $(X,T)$ is a distal minimal compact metric flow with a compactly generated Abelian acting group $T$ and that $f:T\times X\longrightarrow\R$ is a topologically recurrent cocycle which is not a coboundary.
Then there exist a factor $(X_\alpha,T)=\pi_\alpha(X,T)$, a topological cocycle $f_\alpha:T\times X_\alpha\longrightarrow\R$ of $(X_\alpha,T)$, and a distal minimal compact metric $\R$-flow $(M,\{\phi^t:t\in\R\})$, so that the Rokhlin extension $(X_\alpha\times M,T)$ with the action $\tau_{\phi,f_\alpha}$ is a factor $(Y,T)=\pi_Y(X,T)$ of $(X,T)$.
The cocycle $f(\tau,x)$ is topologically cohomologous to $(f_Y\circ\pi_Y)(\tau,x)=f(\tau,x)+b(\tau x)-b(x)$ with a topological cocycle $f_Y:T\times Y\longrightarrow\R$ of the flow $(Y,T)$ so that
\begin{equation}\label{eq:fpy}
\mc D_{T, f_Y\circ\pi_Y}(x,0)\cap(\pi_\alpha^{-1}(\pi_\alpha(x))\times\{0\})=\pi_Y^{-1}(\pi_Y(x))\times\{0\}
\end{equation}
holds for all $x\in X$.
Moreover, there exists a cocycle $g:\R\times M\longrightarrow\R$ of the $\R$-flow $(M,\{\phi^t:t\in\R\})$ so that the cocycle $(\mathbbm 1+g)(t,m)= t+g(t,m)$ is topologically \emph{transient} and
\begin{equation}\label{eq:f_Y2}
f_Y(\tau,(x,m))=f_\alpha(\tau,x)+g(f_\alpha(\tau,x),m)=(\mathbbm 1+g)(f_\alpha(\tau,x),m)
\end{equation}
holds for every $\tau\in T$ and $(x,m)\in Y=X_\alpha\times M$.
Thus the skew product $\widetilde\tau_{f_Y}$ on $Y\times\R$ is the perturbed Rokhlin skew product $\widetilde \tau_{\phi,f_\alpha, g}$.
\end{strth*}

\noindent We shall conclude the proof of this theorem in the next section of this note.
\medskip

The application of the structure theorem for a topological ergodic decomposition requires a suitable topology on the hyperspace of the non-compact space $X\times\R$.
We shall use the Fell topology on the hyperspace of \emph{non-empty} closed subsets of a locally compact separable metric space.
Given finitely many open neighbourhoods $\mc U_1,\dots,\mc U_k$ and a compact set $K$, an element of the Fell topology base consists of all non-empty closed subsets which intersect each of the open neighbourhoods $\mc U_1,\dots,\mc U_k$ while being disjoint from $K$.
This topology is separable, metrisable, and $\sigma$-compact (cf. \cite{HLP}).
The Fell topology was introduced in \cite{Fe} as a compact topology on the hyperspace of all closed subsets, with the empty set as infinity.

\begin{decth*}
Suppose that $f:T\times X\longrightarrow\R$ is a topologically recurrent cocycle of a distal minimal compact metric flow $(X,T)$ with a compactly generated Abelian acting group $T$.
The prolongations $\mc D_{T,f}(x,s)\subset X\times\R$ of the skew product action $\widetilde\tau_{f}$ with $(x,s)\in X\times\R$ define a \emph{partition} of $X\times\R$.
The mapping $(x,s)\mapsto\mc D_{T,f}(x,s)$ is continuous with respect to the Fell topology on the hyperspace of non-empty closed subsets of $X\times\R$, and the right translation on $X\times\R$ is a minimal continuous $\R$-action on the set of prolongations.
If the cocycle $f(\tau,x)$ is not a topological coboundary, then the set of all prolongations in the skew product is Fell compact.
\end{decth*}

\begin{tmath*}
A recurrent cocycle $f(\tau,x)$ apart from a coboundary has a minimal compact metric flow as a topological version of the \emph{Mackey action}.
Its phase space is the set of prolongations in the skew product with the Fell topology, with the right translation of $\R$ acting on the prolongations.
This flow is a distal extension (possibly the trivial extension) of a weakly mixing compact metric flow (possibly the trivial flow).
The Mackey action is distal if and only if the perturbation cocycle $g(t,m)$ in the structure theorem is a topological coboundary.
\end{tmath*}

While most of the properties of the topological Mackey action are part of the decomposition theorem, its structure as a distal extension of a weakly mixing flow will be verified in the next section of this note.
The proof of the decomposition theorem depends on the following general lemma on transient cocycles of minimal $\R$-flows, which might be of independent interest.

\begin{lemma}\label{lem:tr_coc}
Let $(M,\{\phi^t:t\in\R\})$ be a minimal compact metric $\R$-flow and let $h(t,m):\R\times M\longrightarrow\R$ be a \emph{transient} cocycle of $(M,\{\phi^t:t\in\R\})$.
Then for every point $(m,s)\in M\times\R$ the orbit $\mc O_{\phi,h}(m,s)$, the orbit closure $\bar{\mc O}_{\phi,h}(m,s)$, and the prolongation $\mc D_{\phi,h}(m,s)$ under the skew product extension $\widetilde{\phi^t}_h$ coincide.
The mapping from points to their orbits in $M\times\R$ is continuous with a compact range with respect to the Fell topology, and the right translation on $M\times\R$ provides a minimal continuous $\R$-action on the set of orbits.
Moreover, for every $m\in M$ the mapping $t\mapsto h(t,m)$ maps $\R$ \emph{onto} $\R$.
\end{lemma}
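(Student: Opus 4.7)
The plan is to prove the four assertions sequentially, with the surjectivity claim about $t \mapsto h(t,m)$ being the main technical obstacle; the other conclusions then follow by fairly routine transience and minimality arguments.

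First I would establish the pointwise estimate $|h(t,m)| \to \infty$ as $|t| \to \infty$ for every $m \in M$: if a sequence $t_n \to \infty$ had $h(t_n,m) \to c$ bounded, then $\widetilde{\phi^{t_n}}_h(m,0) \to (m',c)$ and the cocycle identity $\widetilde{\phi^{t_n-t_k}}_h \circ \widetilde{\phi^{t_k}}_h(m,0) = \widetilde{\phi^{t_n}}_h(m,0)$ shows that $\{t_n - t_k : n,k \text{ large}\}$ consists of return times to any neighborhood of $(m',c)$ and is extensive in $\R$ (positive as $n\to\infty$ with $k$ fixed, negative as $k\to\infty$ with $n$ fixed); regional recurrence then spreads to every point by minimality of $\phi$ combined with commutativity of right translation (Remarks~\ref{rems:rec}), contradicting transience. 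The same diagonal argument, extended to varying base points with the help of minimality of $\phi$ to pull the limit $m' = \lim \phi^{t_n} m_n$ back to $m^* = \lim m_n$, yields properness: $\{t \in \R : \widetilde{\phi^t}_h(K) \cap K \neq \emptyset\}$ is bounded for every compact $K \subset M \times \R$. Properness immediately implies $\mc O_{\phi,h}(m,s) = \bar{\mc O}_{\phi,h}(m,s) = \mc D_{\phi,h}(m,s)$, since any witnessing sequence has its parameter $t_n$ forced into a bounded set and a convergent subsequence gives $(m',s') = \widetilde{\phi^{t^*}}_h(m,s)$; Fell continuity of the orbit map then follows by upper and lower semi-continuity (lower from continuity of each $\widetilde{\phi^{t^*}}_h$, upper from $\mc D_{\phi,h} = \mc O_{\phi,h}$).

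The main obstacle is surjectivity of $t \mapsto h(t,m)$. Suppose $h(\cdot,m_0)$ is not surjective for some $m_0$; without loss of generality it is bounded below, so the growth estimate forces $h(t,m_0) \to +\infty$ as $|t| \to \infty$. Set $\mu(m) := \inf_t h(t,m)$; the cocycle identity gives $\mu(\phi^t m) = \mu(m) - h(t,m)$, so that $h(t,m) = \mu(m) - \mu(\phi^t m)$ wherever $\mu$ is finite. Upper semi-continuity of $\mu$ is automatic as an infimum of continuous functions; the delicate point is lower semi-continuity. If $m_n \to m^*$ and the minimizers $t_n^{\min}$ realizing $\mu(m_n) = h(t_n^{\min}, m_n)$ were unbounded, the uniform version of the growth estimate obtained in the previous paragraph would force $h(t_n^{\min}, m_n) \to +\infty$, contradicting $\mu(m_n) \leq h(0,m_n) = 0$. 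Hence $t_n^{\min}$ remains bounded, a convergent subsequence yields $\mu(m_n) \to h(t^*, m^*) \geq \mu(m^*)$, and $\mu$ is continuous on $M$. But then $h$ is a topological coboundary with a continuous transfer function on the compact space $M$, hence bounded on $\R\times M$, hence recurrent by Fact~\ref{fact:GH} --- contradicting transience.

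Finally, surjectivity ensures every orbit $\mc O_{\phi,h}(m,s)$ meets $M\times\{0\}$ (for any $t$ with $h(t,m) = -s$ the image $\widetilde{\phi^t}_h(m,s)$ equals $(\phi^t m, 0)$), so the orbit space is the Fell-continuous image of the compact space $M$ under $m \mapsto \mc O_{\phi,h}(m,0)$ and is therefore Fell-compact. For minimality of the right translation, given a target orbit $\mc O_{\phi,h}(m',s')$, use minimality of $\phi$ to choose $t_n \to \infty$ with $\phi^{t_n} m \to m'$ and set $b_n := h(t_n, m) + s - s'$, so that $\widetilde{\phi^{t_n}}_h(m, s - b_n) = (\phi^{t_n} m, s') \to (m', s')$. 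Since $\mc O_{\phi,h}$ is constant along orbits and Fell-continuous, $R_{b_n} \mc O_{\phi,h}(m,s) = \mc O_{\phi,h}(m, s - b_n) = \mc O_{\phi,h}(\widetilde{\phi^{t_n}}_h(m, s - b_n)) \to \mc O_{\phi,h}(m', s')$ in the Fell topology, showing that every orbit of right translation is dense in the orbit space.
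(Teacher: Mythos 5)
Your first step (pointwise divergence of $|h(t,m)|$ for each fixed $m$) is correct: the two visits $\widetilde{\phi^{t_k}}_h(m,0)$ and $\widetilde{\phi^{t_n}}_h(m,0)$ lie on one and the same skew-product orbit, so the differences $t_n-t_k$ are genuine return times of a neighbourhood of $(m',c)$ with cocycle increments $h(t_n,m)-h(t_k,m)\to 0$, and Remarks~\ref{rems:rec} upgrade one regionally recurrent point to recurrence. But everything afterwards rests on your properness claim ($\{t:\widetilde{\phi^t}_h(K)\cap K\neq\emptyset\}$ bounded for compact $K$, equivalently $\inf_{m\in M}|h(t,m)|\to\infty$ as $|t|\to\infty$), and the one-sentence justification you offer --- ``the same diagonal argument, extended to varying base points, pulling $m'=\lim\phi^{t_n}m_n$ back to $m^*=\lim m_n$ by minimality'' --- does not go through. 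With varying base points the visits $\widetilde{\phi^{t_n}}_h(m_n,0)$ and $\widetilde{\phi^{t_k}}_h(m_k,0)$ lie on \emph{different} orbits, so $t_n-t_k$ is no longer a return time with a small increment; and if instead you compose with an $s$ chosen by minimality so that $\phi^{s}(m')$ is near $m^*$, the increment becomes $h(t_n+s,m_n)\approx c+h(s,m')$, over which minimality gives no control at all. Regional recurrence requires increments that are \emph{small} relative to the prescribed neighbourhood of $0$, not merely bounded; what your construction yields is at best a finite limit $a=c+h(s,m')$ produced along a sequence with $d(m_{n},\phi^{\tau_n}m_{n})\to 0$, i.e.\ an element of $E(h)$ by Lemma~\ref{lem:trans} --- and a nonzero essential value does not contradict transience (for the minimal periodic $\R$-flow on the circle the cocycle $h(t,m)=ct$, $c\neq 0$, is transient with $E(h)$ a nontrivial discrete subgroup). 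So the passage from the pointwise estimate to the uniform one is a genuine gap, and it is precisely the hard core of the lemma: the paper handles it by the dichotomy that either the paths $h([0,t_k],m_k)$ stay in a compact set (then Fact~\ref{fact:GH} makes $h$ a coboundary) or the excursion maxima blow up, extracting in the latter case a point $\bar m$ with $S\cdot h(t,\bar m)\le 0$ for all $t\in\R$, proving density of the open sets $M_k=\{m:|h(t,m)|<2^{-k}\enspace\textup{for some}\enspace t<-k\}$, and then running a Baire-category plus pigeonhole construction that produces a regionally recurrent point. Nothing in your proposal replaces that argument.

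For what it is worth, the remainder of your plan would work once properness is actually established: closedness of orbits, $\mc O_{\phi,h}=\mc D_{\phi,h}$, Fell continuity, compactness via surjectivity, and minimality of the right translation are then routine and essentially as in the paper, and your Gottschalk--Hedlund-type treatment of surjectivity via $\mu(m)=\inf_t h(t,m)$ is a reasonable alternative to the paper's route --- provided you also fix the sign issue: uniform divergence only gives $|h(t,m)|\to\infty$, and you need $h(t,m)\to+\infty$ for \emph{all} $m$ (and minimizers in a fixed compact time interval); this does follow, since the eventual sign of $h(t,m)$ as $t\to\pm\infty$ is a continuous, flow-invariant function of $m$, hence constant by minimality and equal to $+$ at $m_0$, but it should be said. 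As written, however, the proposal assumes the essential content of the lemma at the properness step.
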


\begin{proof}
Since prolongations are closed sets, it suffices to verify that for every $(m,s)\in M\times\R$ the orbit and the prolongation coincide.
Otherwise, there exist two points $(m,s), (m',s')\in M\times\R$ so that $(m',s')$ is not in the $\widetilde{\phi^t}_h$-orbit of $(m,s)$, however there exists a sequence $\{(t_k,m_k)\}_{k\geq 1}\subset\R\times M$ so that $(t_k,m_k)\to (+\infty,m)$ and
\begin{equation*}
\widetilde{\phi^{t_k}}_h(m_k,s)=(\phi^{t_k}(m_k),s+h(t_k,m_k))\to (m',s') .
\end{equation*}
If there exists a compact set $L\subset\R$ with $h([0,t_k],m_k)\subset L$ for all $k\geq 1$, then $h([0,\infty),m)\subset L$ since $m_k\to m$, and by Fact \ref{fact:GH} the cocycle $h(t,m)$ is a coboundary in contradiction to its transience.
Therefore we have an increasing sequence of integers $\{k_l\}_{l\geq 1}$, a sequence $\{t'_l\}_{l\geq 1}\subset\R$ with $t'_l\in[0,t_{k_l}]$, and $S\in\{+1,-1\}$ so that
\begin{equation*}
S\cdot h(t'_l,m_{k_l})=\max_{t\in[0,t_{k_l}]}S\cdot h(t,m_{k_l})\to +\infty
\end{equation*}
as $l\to\infty$.
For every limit point $\bar m$ of the sequence $\{\phi^{t'_l}(m_{k_l})\}_{l\geq 1}$ it holds that $S\cdot h(t,\bar m)\leq 0$ for all $t\in\R$, and the mapping $t\mapsto h(t,\bar m)$ maps each of the sets $\R^+$ and $\R^-$ onto $S\cdot \R^-$.
Hence for every $t\in\R^+$ there exists a $t'\in\R^-$ with $h(t,\bar m)=h(t',\bar m)$, and by the density of the semi-orbit $\{\phi^t(\bar m):t\in\R^+\}$ (cf. the proof of Fact \ref{fact:GH}) and the cocycle identity the open set
\begin{equation*}
M_k=\{m\in M:|h(t,m)|<2^{-k}\enspace\textup{for some}\enspace t<-k\}
\end{equation*}
is dense for every integer $k\geq 1$.
For a point $m_k$ in the dense $G_\delta$ set $\bigcap_{t\in\Q}\phi^t(M_k)$, we can find rational numbers $t_1,\dots,t_k<-k$ so that $\phi^{t_1+\cdots+t_l}(m_k)\in M_k$ and $|h(t_1+\dots+t_l,m_k)|<k 2^{-k}$ for all $1\leq l\leq k$.
Since $M$ is compact, there exists a point $\tilde m\in M$ each neighbourhood of which contains at least two distinct points out of the finite sequence $\phi^{t_1}(m_k),\dots,\phi^{t_1+\cdots+t_k}(m_k)\in M_k$ for an infinite set of integers $k\geq 1$.
Thus $(\tilde m,0)$ is a regionally recurrent point for the skew product $\widetilde{\phi^t}_h$, in contradiction to the Remarks \ref{rems:rec} and the transience of the cocycle $h(t,m)$.

Suppose for some $m\in M$ the mapping $t\mapsto h(t,m)$ is not onto $\R$, then either there exists a compact set $L\subset\R$ so that one of the inclusions $h([0,\infty),m)\subset L$, $h((-\infty,0],m)\subset L$ holds, or there exists a point $\bar m\in M$ as above.
In each of these cases, we can obtain a contradiction to the transience of $h(t,m)$ as above.

Let $(m,s)\in M\times\R$ be arbitrary, and let a Fell neighbourhood of its \emph{closed} $\widetilde{\phi^t}_h$-orbit be defined by the open neighbourhoods $\mc U_1,\dots,\mc U_k$ and a compact set $K$.
Obviously the $\widetilde{\phi^t}_h$-orbit of every point in a suitable neighbourhood of $(m,s)$ intersects each of the neighbourhoods $\mc U_1,\dots,\mc U_k$.
Moreover, the $\widetilde{\phi^t}_h$-orbit is disjoint from $K$ for every point in a suitable neighbourhood of $(m,s)$, since otherwise the $\widetilde{\phi^t}_h$-prolongation of $(m,s)$ intersects the compact set $K$.
This contradicts the coincidence of orbits and prolongations of $\widetilde{\phi^t}_h$, and hence the mapping of a point $(m,s)\in M\times\R$ to its $\widetilde{\phi^t}_h$-orbit is Fell continuous.
By the surjectivity of the mapping $t\mapsto h(t,m)$, every $\widetilde{\phi^t}_h$-orbit intersects the set $\{(m,0):m\in M\}$.
The Fell compactness of the set of $\widetilde{\phi^t}_h$-orbits in $M\times\R$ follows, and the right translation on $M\times\R$ is a Fell continuous $\R$-action by the definition of the Fell neighbourhoods.
Since the points $(\phi^t(m),0)$ and $(m,-h(m,t))$ are within the same $\widetilde{\phi^t}_h$-orbit, the minimality of $(M,\{\phi^t:t\in\R\})$ implies the minimality of the right translation action.
\end{proof}

\begin{proof}[Proof of the decomposition theorem]
For a topological coboundary $f(\tau,x)$ the assertions are trivial, since the prolongations are just the right translates of the graph of the transfer function.
If the cocycle $f(\tau,x)$ is not a coboundary, then we apply the structure theorem.
For a point $\bar x_\alpha\in X_\alpha$ with $\bar{\mc O}_{T,f_\alpha}(\bar x_\alpha,0)=X_\alpha\times\R$ and every $m\in M$ follows from equality (\ref{eq:f_Y2}) that $X_\alpha\times\mc O_{\phi,\mathbbm 1+g}(m,0)\subset\bar{\mc O}_{T,f_Y}(\bar x_\alpha,m,0)\subset X_\alpha\times\bar{\mc O}_{\phi,\mathbbm 1+g}(m,0)$, and  by Lemma \ref{lem:tr_coc} holds $\mc O_{\phi,\mathbbm 1+g}(m,0)=\bar{\mc O}_{\phi,\mathbbm 1+g}(m,0)=\mc D_{\phi,\mathbbm 1+g}(m,0)$.
The set of points $\bar x_\alpha\in X_\alpha$ with $\bar{\mc O}_{T,f_\alpha}(\bar x_\alpha,0)=X_\alpha\times\R$ is a dense $G_\delta$ set, and for arbitrary $(x_\alpha,m)\in X_\alpha\times M$ and $(x_\alpha',m',t')\in\mc D_{T,f_Y}(x_\alpha,m,0)$ we can find a sequence $\{(\tau_k,\bar x_k,m_k)\}\subset T\times X_\alpha\times M$ with $(\bar x_k,m_k)\to (x_\alpha,m)$, $\tau_k(\bar x_k,m_k)\to (x_\alpha',m')$, $f_Y(\tau_k,(\bar x_k, m_k))\to t'$, and $X_\alpha\times\mc O_{\phi,\mathbbm 1+g}(m_k,0)=\bar{\mc O}_{T,f_Y}(\bar x_k,m_k,0)$.
Therefore $(m',t')\in\mc D_{\phi,\mathbbm 1+g}(m,0)$ so that $\mc D_{T,f_Y}(x_\alpha,m,0)\subset X_\alpha\times\mc D_{\phi,\mathbbm 1+g}(m,0)$.
We conclude for every $(x_\alpha,m,s)\in X_\alpha\times M\times\R$ that
\begin{equation*}
\mc D_{T,f_Y}(x_\alpha,m,s)=X_\alpha\times\{(\phi^t(m),s+t+g(t,m)):t\in\R\} ,
\end{equation*}
and these sets define a partition of $X_\alpha\times M\times\R$.
The Fell continuity of the mapping $(x_\alpha,m,s)\mapsto\mc D_{T,f_Y}(x_\alpha,m,s)$, the compactness of its range, and the minimality of the right translation follow directly from Lemma \ref{lem:tr_coc}.

Suppose that $(x,s)\in X\times\R$ and $(x_\alpha',m',s')\in\mc D_{T,f_Y}(\pi_Y(x),s)$, and let $\{\tau_k\}_{k\geq 1}\subset T$ and $\{y_k\}_{k\geq 1}\subset X_\alpha\times M$ be sequences with $y_k\to\pi_Y(x)$, $\tau_k y_k\to (x_\alpha',m')$, and $f_Y(\tau_k,y_k)\to s'-s$.
Since $\pi_Y$ is a distal homomorphism, it is an open onto mapping, and the mapping $y\mapsto\pi_Y^{-1}(y)$ is continuous with respect to the Hausdorff metric $d_H$ (cf. \cite{K}, p. 68, Theorem 1, and p. 47).
Therefore we can define a sequence $\{x_k\in\pi_Y^{-1}(y_k)\}_{k\geq 1}\subset X$ so that $x_k\to x$, $\tau_k x_k\to x'\in\pi_Y^{-1}(x'_\alpha,m')$, and $(f_Y\circ\pi_Y)(\tau_k,x_k)\to s'-s$.
By equality (\ref{eq:fpy}), for every $x''\in\pi_Y^{-1}(\pi_Y(x'))$ there exists a sequence $\{(\bar\tau_k,\bar x_k)\}_{k\geq 1}$ with $\bar x_k$ sufficiently close to $x_k$ so that $\bar x_k\to x$, $\tau_k\bar x_k\to x'$, $(f_Y\circ\pi_Y)(\tau_k,\bar x_k)\to s'-s$, $d(\bar\tau_k \tau_k\bar x_k,x'')<d_H(\pi_Y^{-1}(\pi_Y(\tau_k x_k)),\pi_Y^{-1}(\pi_Y(x')))+2^{-k}$, and $(f_Y\circ\pi_Y)(\bar\tau_k,\tau_k\bar x_k)<2^{-k}$.
From $d_H(\pi_Y^{-1}(\pi_Y(\tau_k x_k)),\pi_Y^{-1}(\pi_Y(x')))\to 0$ follows that $\bar\tau_k\tau_k\bar x_k\to x''$ and $(f_Y\circ\pi_Y)(\bar\tau_k\tau_k,\bar x_k)\to s'-s$, hence $(x'',s')\in\mc D_{T,f_Y\circ\pi_Y}(x,s)$.
Thus the $\widetilde\tau_{f_Y\circ\pi_Y}$-prolongations in $X\times\R$ are exactly the pre-images of the $\widetilde\tau_{f_Y}$-prolongations under the mapping $\pi_Y\times\textup{id}_\R$.
If a Fell neighbourhood of $\mc D_{T,f_Y\circ\pi_Y}(x,s)$ is defined by the open neighbourhoods $\mc U_1,\dots,\mc U_k$ and a compact set $K$ in $X\times\R$, then by the openness of $\pi_Y$ the $\pi_Y\times\textup{id}_\R$-images of these sets define a Fell neighbourhood of $\mc D_{T,f_Y}(\pi_Y(x),s)$.
The Fell continuity of $(x,s)\mapsto \mc D_{T,f_Y\circ\pi_Y}(x,s)$, the compactness of the range, and the minimality of the right translation follow.

Let $b:X\longrightarrow\R$ be the transfer function in the structure theorem.
The homeomorphism $(x,s)\mapsto (x,s+b(x))$ on $X\times\R$ defines a homeomorphism of the hyperspace with the Fell topology commuting with the right translation.
Since it maps $\mc D_{T,f}(x,s)$ onto $\mc D_{T,f_Y\circ\pi_Y}(x,s+b(x))$, it is a topological isomorphism of the right translation actions and all properties carry over.
\end{proof}

\begin{remarks}
Though the compact metric flow $(X,T)$ is not necessarily itself a Rokhlin extension, the existence of a real-valued recurrent non-coboundary cocycle $f(\tau,x)$ with a non-transitive skew product extension $\widetilde\tau_f$ implies the existence of a Rokhlin extension factor $(Y,T)=\pi_Y(X,T)$ with a non-trivial flow $\{\phi^t:t\in\R\}$ and a cocycle $f_Y(\tau,y)$ of the flow $(Y,T)$ with $f_Y\circ\pi_Y$ cohomologous to $f$.

The surjectivity of the mapping $t\mapsto (\mathbbm 1+g)(t,m)$ for every $m\in M$ implies that $\{f(\tau,x):\tau\in T\}$ is dense in $\R$ for all $x$ in a dense $G_\delta$ subset of $X$.

For other well-known topologies on the hyperspace like the Vietoris topology and the Hausdorff topology with respect to the product metric on $X\times\R$, the continuity of the mapping $(x,s)\mapsto\mc D_{T,f}(x,s)$ can be disproved by Example \ref{ex:pe}.
\end{remarks}

\section{Proof of the structure theorem}

Furstenberg's structure theorem for distal minimal flows shall be our main tool for studying the structure of cocycles.

\begin{defi}
Let $X$ and $Y$ be compact metric spaces, let $\pi$ be a continuous mapping from of $X$ onto $Y$, and let $M$ be a compact homogeneous metric space.
Suppose that $\rho(x_1,x_2)$ is a continuous real valued function defined on the set
\begin{equation*}
R_\pi=\{(x,x')\in X^2: \pi(x)=\pi(x')\}
\end{equation*}
so that for every $y\in Y$ the function $\rho$ is a metric on the fibre $\pi^{-1}(y)$ with an isometric mapping between $\pi^{-1}(y)$ and $M$.
Then $X$ is called an $M$-\emph{bundle} over $Y$.

Now let $(X,T)$ and $(Y,T)=\pi(X,T)$ be compact metric flows with $X$ an $M$-bundle over $Y$.
If the function $\rho$ satisfies $\rho(x,x')=\rho(\tau x,\tau x')$ for all $(x,x')\in R_\pi$ and $\tau\in T$, then $(X,T)$ is called an \emph{isometric extension} of $(Y,T)$.
\end{defi}

\begin{fact}[Furstenberg's structure theorem \cite{Fu}]
Let $(X,T)$ be a distal minimal compact metric flow.
Then there exist a countable ordinal $\eta$ and factors $(X_\xi,T)=\pi_\xi(X,T)$ for each ordinal $0\leq\xi\leq\eta$ with the following properties:
\begin{enumerate}
\item $(X_\eta,T)=(X,T)$ and $(X_0,T)$ is the trivial flow.
\item $(X_\xi,T)=\pi_\xi^{\zeta}(X_{\zeta},T)$ is a factor of $(X_{\zeta},T)$ for all ordinals $0\leq\xi<\zeta\leq\eta$.
\item For every ordinal $0\leq\xi<\eta$ the flow $(X_{\xi+1},T)$ is an isometric extension of $(X_\xi,T_\xi)$.
\item For a limit ordinal $0<\xi\leq\eta$ the flow $(X_\xi,T)$ is the inverse limit of the flows $\{(X_{\zeta},T):0\leq\zeta<\xi\}$.
\end{enumerate}
A system $\{(X_\xi,T):0\leq\xi\leq\eta\}$ with the properties above is called a quasi-isometric system or \emph{I}-system.
\end{fact}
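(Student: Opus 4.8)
This is Furstenberg's classical structure theorem \cite{Fu}, so the plan would be to reproduce its proof; I sketch the strategy. The $I$-system is assembled by transfinite induction whose inductive engine is the \emph{relative primitivity lemma}: if $\pi\colon X\longrightarrow Y$ is a distal homomorphism of minimal compact metric flows which is not an isomorphism, then $\pi$ factors through a \emph{non-trivial} isometric extension, i.e.\ there is a factor $X\longrightarrow Z\longrightarrow Y$ with $Z\neq Y$ and $Z\longrightarrow Y$ isometric. Granting the lemma, put $X_0$ equal to the trivial flow; given $X_\xi=\pi_\xi(X,T)$ with $X_\xi\neq X$, the homomorphism $X\longrightarrow X_\xi$ is distal --- two distinct points of a fibre are distinct points of the distal flow $X$, hence distal --- and is not an isomorphism, so one lets $X_{\xi+1}$ be the \emph{maximal} isometric extension of $X_\xi$ which is still a factor of $X$, that is, the factor whose function algebra is the closed subalgebra of $C(X)$ generated by $C(X_\xi)$ together with the functions almost periodic over $X_\xi$; the lemma guarantees $X_{\xi+1}\neq X_\xi$. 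At a limit ordinal $\xi$ let $X_\xi$ be the inverse limit of the $X_\zeta$, $\zeta<\xi$. So long as $X_\xi\neq X$ the algebras $C(X_\xi)$ form a strictly increasing transfinite chain of closed subspaces of the separable Banach space $C(X)$, so the chain has countable length; at the ordinal $\eta$ where it stabilises one must have $X_\eta=X$, since otherwise the lemma applied to $X\longrightarrow X_\eta$ would produce $X_{\eta+1}\supsetneq X_\eta$. Properties (i)--(iv) are then read off the construction, and the fibre $M$ of an isometric extension $X_{\xi+1}\longrightarrow X_\xi$ is a compact homogeneous metric space because the minimality of $X_\xi$ together with the $T$-invariance of the fibre metric forces, by a standard argument, the isometry group of $M$ to act transitively on $M$.

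The substance is the lemma, which I would prove through the enveloping semigroup of the distal flow. Recall that $E=E(X,T)$ is a group of (possibly discontinuous) bijections of $X$, compact in the topology of pointwise convergence, in which $T$ sits densely and centrally; being compact, $E$ has closed orbits, so minimality makes $E$ act transitively on $X$ and likewise on $Y$, and, as $T$ is a dense group of continuity points, $E$ carries a Haar probability measure. Form the relation flow $R_\pi=\{(x,x')\in X^2:\pi(x)=\pi(x')\}$, a closed $T$-invariant subset of the distal flow $X\times X$; it therefore partitions into minimal subflows, one of which is the diagonal $\Delta_X$, and the hypothesis that $\pi$ is not an isomorphism is exactly that $R_\pi\neq\Delta_X$, so that some minimal subflow $W\subseteq R_\pi$ is disjoint from $\Delta_X$. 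One then produces the maximal isometric extension $Z$ of $Y$ inside $X$: it is cut out by the largest closed $T$-invariant equivalence relation $S$ with $\Delta_X\subseteq S\subseteq R_\pi$ that carries a $T$-invariant continuous fibre metric over $Y$, and such an invariant metric is obtained by averaging a fibrewise pseudometric on $R_\pi$ over the Haar measure of $E$, the pseudometric inequalities surviving because they are preserved by averaging. This exhibits $Z\longrightarrow Y$ as an isometric extension, maximal with $X\longrightarrow Z$.

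There are two points one has to work for, and the second is the one I expect to be the main obstacle. The first is \emph{continuity} of the averaged fibre metric: for a distal flow the relevant averages are continuous, but this is a genuine technical step, since the individual translates are not equicontinuous. The second, decisive, point is that $Z\neq Y$ when $\pi$ is not an isomorphism, and this is where distality is indispensable. The mechanism is the topological relative structure dichotomy: if $Z=Y$, so that $X\longrightarrow Y$ has no non-trivial isometric sub-extension, then the relation flow $R_\pi$ is forced to be topologically transitive; but $R_\pi$ is distal, so the orbit closure of a transitive point is a single minimal subflow, hence $R_\pi$ itself is minimal, and as $R_\pi$ contains the subflow $\Delta_X$ this gives $R_\pi=\Delta_X$ --- contradicting the hypothesis on $\pi$. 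Proving the implication ``trivial relative Kronecker factor $\Longrightarrow$ topologically transitive relation flow'' is the genuinely hard part of the whole argument; in Furstenberg's original treatment one bypasses the abstract dichotomy and argues directly from the chosen minimal subflow $W\neq\Delta_X$, showing that the uniformity on the $\pi$-fibres generated by $W$ and its iterated relation products is, by distality together with minimality, compatible with a $T$-invariant metric and is non-trivial over $Y$ precisely because $W$ strictly contains the diagonal. Once non-triviality is secured, the residual bookkeeping --- openness of distal factor maps, pairwise isometry of the fibres of $Z\longrightarrow Y$ to a fixed compact homogeneous model, and the inverse-limit behaviour at limit stages --- is routine from minimality and compactness of the flows.
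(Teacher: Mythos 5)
The paper does not prove this statement: it is presented as a \emph{Fact} and simply cited from Furstenberg's paper \cite{Fu}, so there is no ``paper's proof'' to compare against. Your sketch is a correct reconstruction of the argument and correctly isolates the two load-bearing pieces: the transfinite induction driven by the relative primitivity lemma, and the non-triviality of the maximal isometric sub-extension, which you rightly flag as the genuinely hard step.

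One remark on provenance and on a small imprecision. The route you take through the enveloping semigroup --- $E(X,T)$ a compact right-topological group, $T$ a dense subgroup of continuity points so that $E$ carries Haar measure, and then averaging a fibrewise pseudometric over that measure --- is the Ellis--Glasner streamlining, not Furstenberg's original 1963 argument, which proceeds through a pseudometric minimax lemma on the relation flow $R_\pi$; you acknowledge this in your last paragraph, so the attribution is clear. The imprecision is in ``$T$ sits densely and centrally in $E$'': centrality of $T$ in the Ellis group is automatic only for Abelian $T$ (which is the standing assumption of the present paper but not of Furstenberg's theorem), so in the general statement one has to either drop it or replace it by the correct relation $p\tau=\tau p$ valid for a syndetic or Abelian acting group. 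Also, the existence of Haar measure on $E$ is not merely compactness: $E$ is right-topological, not topological, and one needs the admissibility supplied by the dense subgroup of continuity points (Namioka/Milnes--Pym/Glasner); you gesture at this, but the logical order is reversed in your phrasing (density of continuity points is the \emph{hypothesis} of the Haar-measure theorem, not a by-product of compactness). Finally, note that the paper subsequently restricts to the \emph{normal} $I$-system and invokes Furstenberg's Proposition~13.1, so the relevant $X_{\xi+1}$ must indeed be the \emph{maximal} isometric extension of $X_\xi$ inside $X$, exactly as your construction via the relativised equicontinuous structure relation produces. Modulo these clarifications, the sketch is sound.
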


\begin{defi}
An \emph{I}-system $\{(X_\xi,T):0\leq\xi\leq\eta\}$ is called \emph{normal} if for each ordinal $0\leq\xi<\eta$ the flow $(X_{\xi+1},T)$ is the maximal isometric extension of $(X_{\xi},T)$ in $(X_{\eta},T)$.
This \emph{I}-system gives the minimal ordinal $\eta$ to represent the compact metric flow $(X,T)=(X_\eta,T)$ (cf. \cite{Fu}, Proposition 13.1, Definitions 13.2 and 13.3).
\end{defi}

It will be essential that the fibres of all the isometric extensions are connected, with a possible exception of extension from the trivial flow to the minimal isometric flow $(X_1,T)$.
For a \emph{normal} \emph{I}-system this property will be ensured by results of the paper \cite{MMWu}.
These results require the acting group to be generated by every open neighbourhood of a compact subset, and the group $T$ is even compactly generated.

\begin{proposition}
Let $\{(X_\xi,T):0\leq\xi\leq\eta\}$ be a normal \emph{I}-system with a compactly generated group $T$ acting minimally and distally.
Then for all ordinals $1\leq\xi<\zeta\leq\eta$ the extension from $(X_\xi,T)$ to $(X_\zeta,T)$ has connected fibres.
\end{proposition}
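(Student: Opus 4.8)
The plan is to isolate the successor steps of the $I$-system, where the theorem of \cite{MMWu} supplies the essential input, and then propagate connectedness up the whole tower by transfinite induction on $\zeta$. For $1\le\xi<\eta$ the extension $\pi_\xi^{\xi+1}\colon(X_{\xi+1},T)\to(X_\xi,T)$ is isometric and, by normality of the $I$-system, it is the maximal isometric extension of $(X_\xi,T)$ inside $(X_\eta,T)$. Since $T$ is compactly generated, every open neighbourhood of a fixed compact generating set is again a generating set, which is exactly the hypothesis needed for the results of \cite{MMWu}; they yield that the fibres of $\pi_\xi^{\xi+1}$ are connected. This proves the statement for $\zeta=\xi+1$, and the restriction to $\xi\ge 1$ reflects that the bottom extension $(X_1,T)\to(X_0,T)$ — that is, the equicontinuous factor $X_1$ itself — may genuinely be disconnected.

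For $\zeta>\xi+1$ the map $\pi_\xi^\zeta\colon X_\zeta\to X_\xi$ is in general no longer isometric, so \cite{MMWu} cannot be applied to it directly; instead I would fix $\xi\ge 1$ and show by transfinite induction on $\zeta>\xi$ that every fibre $(\pi_\xi^\zeta)^{-1}(y)$, $y\in X_\xi$, is connected. The base case $\zeta=\xi+1$ is the paragraph above. If $\zeta$ is a limit ordinal, then $X_\zeta$ is the inverse limit of the flows $X_{\zeta'}$ with $\zeta'<\zeta$, and accordingly $(\pi_\xi^\zeta)^{-1}(y)$ is the inverse limit of the spaces $(\pi_\xi^{\zeta'})^{-1}(y)$ over $\xi\le\zeta'<\zeta$; the bonding maps among these are onto because factor maps of flows lift points, and each such space is connected by the inductive hypothesis when $\zeta'>\xi$ and is a singleton when $\zeta'=\xi$. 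As $\eta$ is a countable ordinal, $\zeta$ admits a cofinal increasing $\omega$-sequence, so $(\pi_\xi^\zeta)^{-1}(y)$ is the inverse limit of a sequence of nonempty compact connected spaces with surjective bonding maps and is therefore connected.

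It remains to treat $\zeta=\zeta'+1$ with $\zeta'>\xi$, so that $\zeta'\ge\xi+1\ge 2$. Then the fibres of $\pi_{\zeta'}^\zeta\colon X_\zeta\to X_{\zeta'}$ are connected by the successor case, and the fibres of $\pi_\xi^{\zeta'}$ are connected by the inductive hypothesis. Fixing $y\in X_\xi$ and setting $F=(\pi_\xi^\zeta)^{-1}(y)$ and $B=(\pi_\xi^{\zeta'})^{-1}(y)$, the identity $\pi_\xi^\zeta=\pi_\xi^{\zeta'}\circ\pi_{\zeta'}^\zeta$ gives $F=(\pi_{\zeta'}^\zeta)^{-1}(B)$, so $\pi_{\zeta'}^\zeta$ restricts to a continuous surjection $p\colon F\to B$ whose point-fibres are fibres of $\pi_{\zeta'}^\zeta$, hence connected. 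Isometric — and hence distal — extensions of minimal flows are open maps, so $\pi_{\zeta'}^\zeta$ is open, and so is its restriction $p$ to the full preimage $F$. I would then invoke the elementary fact that an open continuous surjection onto a connected space all of whose fibres are connected has connected domain: given a clopen partition $F=U\sqcup V$ into nonempty pieces, the images $p(U)$ and $p(V)$ are open with union the connected space $B$, hence meet, and any point of $p(U)\cap p(V)$ would have a disconnected fibre. Thus $F$ is connected, which completes the induction and the proof.

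The only genuinely hard ingredient is external: it is the theorem of \cite{MMWu} used in the successor case, which is where the structural hypothesis on $T$ is really exploited. Within the present argument the single delicate point is that a composition of isometric extensions need not be isometric, so the successor result cannot be applied to $\pi_\xi^\zeta$ as a whole; this is bypassed by the openness of isometric extensions together with the elementary connectedness lemma, with routine inverse-limit bookkeeping handling the limit ordinals.
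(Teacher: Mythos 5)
Your propagation steps are correct and nicely organized — inverse limits at limit ordinals, the open-map connectedness lemma at successor ordinals — but the base case misapplies McMahon–Wu, and this is a genuine gap. Theorem~3.7 of \cite{MMWu}, as the paper cites it, asserts that when $X_{\xi+1}=X/S(\pi_\xi)$ is the maximal isometric extension of $X_\xi$ inside $X$, the \emph{top} projection $\pi_{\xi+1}\colon X\to X_{\xi+1}$ has connected fibres. It does \emph{not} assert that the intermediate homomorphism $\pi_\xi^{\xi+1}\colon X_{\xi+1}\to X_\xi$ has connected fibres; these are different statements, the first about subsets of $X$ lying over points of $X_{\xi+1}$, the second about subsets of $X_{\xi+1}$ lying over points of $X_\xi$. (That the distinction is real is visible already at $\xi=0$: MMWu gives connected fibres of $\pi_1\colon X\to X_1$, and yet $X_1$ itself, the fibre of $\pi_0^1$, may be a Cantor set.) To pass from MMWu to the statement about $\pi_\xi^{\xi+1}$ one must observe that $(\pi_\xi^{\xi+1})^{-1}(x_\xi)=\pi_{\xi+1}\bigl(\pi_\xi^{-1}(x_\xi)\bigr)$ is the continuous image of a fibre of $\pi_\xi$, and for this one must already know that $\pi_\xi\colon X\to X_\xi$ has connected fibres. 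When $\xi$ is a successor, that follows from MMWu applied one step lower, to $\pi_{\xi-1}$; but your base case $\zeta=\xi+1$ is asserted for arbitrary $\xi\ge1$ including limit ordinals, where no single MMWu application gives the connectedness of $\pi_\xi$, and you have no mechanism for it.

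The paper sidesteps all of this by proving the stronger intermediate claim directly: for every $\xi\ge1$ the full projection $\pi_\xi\colon X\to X_\xi$ has connected fibres — by MMWu (applied to $\pi_{\xi-1}$) when $\xi$ is a successor, and by the decreasing intersection $\pi_\xi^{-1}(x_\xi)=\bigcap_{\zeta<\xi}\pi_{\zeta+1}^{-1}\bigl(\pi_{\zeta+1}^\xi(x_\xi)\bigr)$ of connected compacta when $\xi$ is a limit. Once that is in place, the Proposition is a one-liner: the fibres of $\pi_\xi^\zeta$ are the $\pi_\zeta$-images of the connected fibres of $\pi_\xi$. This renders your entire transfinite induction on $\zeta$ (the inverse-limit step and the open-map lemma) unnecessary — not wrong, but superfluous; and the induction as you have set it up cannot even get off the ground at limit $\xi$ without first importing the paper's intersection argument anyway.
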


\begin{proof}
In the following arguments we shall refer to terminology and results provided in the paper \cite{MMWu}.
At first suppose that $1\leq\xi<\eta$ is not a limit ordinal.
Let $S(\pi_{\xi-1})\subset X^2$ denote the relativised equicontinuous structure relation of the homomorphism $\pi_{\xi-1}:(X,T)\longrightarrow(X_{\xi-1},T)$, hence the flow $(X,T)/S(\pi_{\xi-1})$ is the maximal isometric extension $(X_\xi,T)$ of the flow $(X_{\xi-1},T)$ in $(X,T)$.
By Theorem 3.7 of \cite{MMWu} the homomorphism $\pi_\xi:(X,T)\longrightarrow (X_\xi,T)$ has connected fibres.
For a limit ordinal $1<\xi<\eta$ and an ordinal $0\leq\zeta<\xi$, the same argument shows the connectedness of the fibres of $\pi_{\zeta+1}:(X,T)\longrightarrow(X_{\zeta+1},T)$.
Since
\begin{equation*}
(\pi_\xi)^{-1}(x_\xi)=\bigcap_{0\leq\zeta<\xi}(\pi_{\zeta+1})^{-1}(\pi_{\zeta+1}^\xi(x_\xi))
\end{equation*}
holds for every $x_\xi\in X_\xi$, the fibre $(\pi_\xi)^{-1}(x_\xi)$ is connected as the limit of a sequence of connected sets in a compact metric space (cf. \cite{K}, p. 170, Theorem 14).
The hypothesis follows, since for all ordinals $1\leq\xi<\zeta\leq\eta$ the fibres of the homomorphism $\pi_\xi^\zeta$ are the images under $\pi_\zeta$ of the connected fibres of $\pi_\xi$.
\end{proof}

We shall henceforth assume that $\{(X_\xi,T):0\leq\xi\leq\eta\}$ is a normal \emph{I}-system with $(X_\eta,T)=(X,T)$.
For every ordinal $1\leq\xi<\eta$ we shall define a projection of the cocycles of $(X,T)$ to the cocycles of $(X_\xi,T)$ by families of probability measures, using the fact that every distal extension of compact metric flows is a so-called \emph{RIM}-extension (relatively invariant measure, cf. \cite{Gl}).
For an isometric extension this \emph{RIM} is unique (cf. \cite{Gl}), and within the \emph{I}-system the \emph{RIM's} obey to an integral decomposition formula.

\begin{fact}
For every ordinal $0\leq\xi\leq\eta$ there exists a family of probability measures $\{\mu_{\xi,x_\xi}:x_\xi\in X_\xi\}$ on $X$ so that for every $x_\xi\in X_\xi$ and $\tau\in T$ holds
\begin{equation*}
\mu_{\xi,x_\xi}(\pi_\xi^{-1}(x_\xi))=1\enspace\textup{and}\enspace\mu_{\xi,x_\xi}\circ \tau^{-1}=\mu_{\xi, \tau x_\xi}.
\end{equation*}
The mapping $x_\xi\mapsto\mu_{\xi,x_\xi}$ is continuous with respect to the weak-* topology on $C(X)^*$.
For a continuous function $\varphi\in C(X)$ and ordinals $1\leq\xi<\zeta\leq\eta$ we have for all $x_\xi\in X_\xi$ the equality that
\begin{equation}
\mu_{\xi,x_\xi}(\varphi)=\int_{X_\zeta}\mu_{\zeta,y_\zeta}(\varphi)\, d(\mu_{\xi,x_\xi}\circ\pi_\zeta^{-1})(y_\zeta) .
\end{equation}
\end{fact}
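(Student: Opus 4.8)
The plan is to construct the measures $\mu_{\xi,x_\xi}$ by transfinite recursion along the $I$-system, arranging the construction so that the integral decomposition formula is automatic rather than derived from a uniqueness statement. The one ingredient from outside the excerpt is the relatively invariant measure of an isometric extension: if $(X_{\zeta+1},T)$ is an isometric extension of $(X_\zeta,T)$, so that $X_{\zeta+1}$ is an $M_\zeta$-bundle over $X_\zeta$ with $M_\zeta$ a compact homogeneous space, then there is a weak-$*$ continuous family of probability measures $\nu^\zeta_{x_\zeta}$ on $X_{\zeta+1}$ with $\nu^\zeta_{x_\zeta}\bigl((\pi_\zeta^{\zeta+1})^{-1}(x_\zeta)\bigr)=1$ and $\nu^\zeta_{x_\zeta}\circ\tau^{-1}=\nu^\zeta_{\tau x_\zeta}$ --- obtained by transporting the normalized Haar measure of the homogeneous fibre --- and it is the unique such family (cf. \cite{Gl}).

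First I would introduce, for each ordinal $\zeta\le\eta$ and each $\xi\le\zeta$, an auxiliary probability measure $\mu^\zeta_{\xi,x_\xi}$ on $X_\zeta$, recursively in $\zeta$: put $\mu^\zeta_{\zeta,x_\zeta}=\delta_{x_\zeta}$ throughout; at a successor ordinal $\zeta=\zeta_0+1$ set, for $\xi\le\zeta_0$,
\begin{equation*}
\mu^\zeta_{\xi,x_\xi}(\varphi)=\int_{X_{\zeta_0}}\nu^{\zeta_0}_{y}(\varphi)\,d\mu^{\zeta_0}_{\xi,x_\xi}(y)\qquad(\varphi\in C(X_\zeta));
\end{equation*}
and at a limit ordinal $\zeta$ let $\mu^\zeta_{\xi,x_\xi}$ be the unique probability measure on the inverse limit $X_\zeta=\varprojlim_{\zeta'<\zeta}X_{\zeta'}$ whose pushforward under $\pi_{\zeta'}^{\zeta}$ equals $\mu^{\zeta'}_{\xi,x_\xi}$ for every $\zeta'$ with $\xi\le\zeta'<\zeta$. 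One then sets $\mu_{\xi,x_\xi}:=\mu^\eta_{\xi,x_\xi}$. Throughout the recursion I would maintain, for all $\xi\le\zeta'\le\zeta$, that (a) $\mu^\zeta_{\xi,x_\xi}$ is a probability measure concentrated on $(\pi_\xi^{\zeta})^{-1}(x_\xi)$; (b) $\mu^\zeta_{\xi,x_\xi}\circ\tau^{-1}=\mu^\zeta_{\xi,\tau x_\xi}$; (c) $x_\xi\mapsto\mu^\zeta_{\xi,x_\xi}$ is weak-$*$ continuous; (d) $\mu^\zeta_{\xi,x_\xi}\circ(\pi_{\zeta'}^{\zeta})^{-1}=\mu^{\zeta'}_{\xi,x_\xi}$; and (e) $\mu^\zeta_{\xi,x_\xi}(\varphi)=\int_{X_{\zeta'}}\mu^\zeta_{\zeta',y}(\varphi)\,d\mu^{\zeta'}_{\xi,x_\xi}(y)$. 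At the successor step each of (a)--(e) follows by a direct computation from the corresponding property of $\nu^{\zeta_0}$ and the inductive hypotheses, (e) using Fubini's theorem for finite measures together with (e) at level $\zeta_0$. The three assertions of the Fact are then exactly (a), (b) and (c) for $\zeta=\eta$, together with (e) for $\zeta=\eta$ and $\zeta'$ the ordinal of the statement, once one observes via (d) that $\mu_{\xi,x_\xi}\circ\pi_\zeta^{-1}=\mu^\zeta_{\xi,x_\xi}$.

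I expect the limit ordinal to be the main obstacle. There I would first note that the families $\{\mu^{\zeta'}_{\xi,x_\xi}:\xi\le\zeta'<\zeta\}$ are coherent under the bonding maps, which is precisely (d) at the earlier stages, and then invoke the standard construction of a projective limit of probability measures over a countable inverse system of compact metric spaces with surjective bonding maps --- tightness being automatic by compactness --- to obtain $\mu^\zeta_{\xi,x_\xi}$. Uniqueness of this projective limit is what makes (b), (d) and (e) pass cleanly to $\zeta$, since in each case both sides are measures on $X_\zeta$ with identical pushforwards to every $X_{\zeta'}$, $\zeta'<\zeta$. The genuinely delicate point is the weak-$*$ continuity (c) at the limit: if $x_\xi^{(n)}\to x_\xi$ in $X_\xi$, then for each $\zeta'<\zeta$ the pushforwards $\mu^\zeta_{\xi,x_\xi^{(n)}}\circ(\pi_{\zeta'}^{\zeta})^{-1}$ converge weak-$*$ to $\mu^\zeta_{\xi,x_\xi}\circ(\pi_{\zeta'}^{\zeta})^{-1}$ by (c) at level $\zeta'$, and since the functions $\psi\circ\pi_{\zeta'}^{\zeta}$ with $\psi\in C(X_{\zeta'})$, $\zeta'<\zeta$, form a dense subalgebra of $C(X_\zeta)$ by Stone--Weierstrass, this forces $\mu^\zeta_{\xi,x_\xi^{(n)}}\to\mu^\zeta_{\xi,x_\xi}$ weak-$*$. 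The remaining verifications at both the successor and limit stages are routine bookkeeping.
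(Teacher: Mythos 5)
Your construction is precisely the one the paper's proof cites from Furstenberg (Chapter 12 of \cite{Fu}) and de Vries (p.~494 of \cite{dVr}): a transfinite induction on the \emph{I}-system, initiated with point measures on $X_\xi$, pushing up through each isometric extension via its unique \emph{RIM} and through each limit ordinal via the projective limit of probability measures. Carrying the decomposition formula along as your invariant (e) is exactly the paper's remark that the formula follows from the inductive construction with the \emph{I}-system and the \emph{RIM}'s kept fixed, so the proposal is correct and takes essentially the same route.
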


\begin{proof}
The proof follows the inductive construction of an invariant measure for $(X,T)$ out of the unique \emph{RIM's} of the extensions $(X_\xi,T)=\pi_\xi^{\xi+1}(X_{\xi+1},T)$ with $0\leq\xi<\eta$ (cf. Chapter 12 of \cite{Fu}).
The generalisation to a relatively invariant measure is provided in \cite{dVr}, p. 494, where the induction process is initiated with the family of point measures on $X_\xi$ instead of the point measure on the trivial space $X_0$.
Since the \emph{I}-system and the \emph{RIM's} for the isometric extensions remain fixed, the decomposition formula follows from the inductive construction.
\end{proof}

Given a cocycle $f(\tau,x)$ of the flow $(X,T)$ and an ordinal $1\leq\xi<\eta$, the \emph{RIM} $\{\mu_{\xi,x_\xi}:x_\xi\in X_\xi\}$ defines a continuous function $f_\xi:T\times X_\xi\longrightarrow\R$ by
\begin{equation*}
f_\xi(\tau,x_\xi)=\mu_{\xi,x_\xi}(f(\tau,\cdot))=\int_X f(\tau,x)\,d\mu_{\xi,x_\xi}(x) .
\end{equation*}
The properties of the \emph{RIM} imply for all $\tau,\tau'\in T$ and $x_\xi\in X_\xi$ that
\begin{eqnarray*}
f_\xi(\tau,\tau'x_\xi)+f_\xi(\tau',x_\xi)=\mu_{\xi,\tau'x_\xi}(f(\tau,\cdot))+\mu_{\xi,x_\xi}(f(\tau',\cdot))=\\
=\mu_{\xi,x_\xi}(f(\tau,\cdot)\circ\tau')+\mu_{\xi,x_\xi}(f(\tau',\cdot))=f_\xi(\tau\tau',x_\xi),
\end{eqnarray*}
therefore $f_\xi$ is a cocycle of the flow $(X_\xi,T)$.
Furthermore, for ordinals $\xi$, $\zeta$ with $1\leq\xi<\zeta\leq\eta$ and every $x_\xi\in X_\xi$, the integral of the cocycle $(f_\zeta-f_\xi\circ\pi_\xi^\zeta)(\tau,x_\zeta)$ by the measure $\mu_{\xi,x_\xi}\circ\pi_\zeta^{-1}$ on $X_\zeta$ vanishes for every $\tau\in T$:
\begin{eqnarray*}
\int_{X_\xi}(f_\zeta-f_\xi\circ\pi_\xi^\zeta)(\tau,x_\zeta)\, d(\mu_{\xi,x_\xi}\circ\pi_\zeta^{-1})(x_\zeta) = \hspace{3cm}\\
= \int_{X_\xi}\left( \mu_{\zeta,x_\zeta}(f(\tau,\cdot))\right)\, d(\mu_{\xi,x_\xi}\circ\pi_\zeta^{-1})(x_\zeta) - \mu_{\xi,\pi_\xi^\zeta(x_\zeta)} (f(\tau,\cdot)) = 0
\end{eqnarray*}
Since the measure $\mu_{\xi,x_\xi}\circ\pi_\zeta^{-1}$ is supported by the connected fibre $(\pi_\xi^\zeta)^{-1}(x_\xi)$ in $X_\zeta$, for every $\tau\in T$ and every $x_\xi\in X_\xi$ the function $x_\zeta\mapsto (f_\zeta-f_\xi\circ\pi_\xi^\zeta)(\tau,x_\zeta)$ assumes zero on the fibre $(\pi_\xi^\zeta)^{-1}(x_\xi)$.
This property will be essential, as well as the representation of extensions of distal flows by so-called regular extensions.

\begin{fact}\label{fact:reg_ex}
Let $(X,T)$ be a distal minimal compact metric flow with a factor $(Y,T)=\sigma(X,T)$.
Then there exist a distal minimal compact Hausdorff flow $(\tilde X,T)$ with $(X,T)=\pi(\tilde X,T)$ as a factor and a Hausdorff topological group $G\subset\textup{Aut}(\tilde X,T)$ acting freely on $\tilde X$ (i.e. $g(\tilde x)=\tilde x$ for some $\tilde x\in\tilde X$ implies $g=\textbf 1_G$).
The group $G$ acts strictly transitive on the fibres $\tilde\sigma^{-1}(\tilde\sigma(\tilde x))=\{g(\tilde x):g\in G\}$ of the homomorphism $\tilde\sigma=\sigma\circ\pi$ for every $\tilde x\in\tilde X$, and $(X,T)$ is the orbit space of a subgroup $H$ of $G$ in $\tilde X$ so that $\pi$ is the mapping of a point in $\tilde X$ to its $H$-orbit (cf. \cite{El} 12.12, 12.13, and 14.26, with a direct proof in \cite{MMWu}, Proposition 1.1).

For an isometric extension $(X,T)$ of $(Y,T)$, the flow $(\tilde X,T)$ is metric and an isometric extension of $(Y,T)$, with a \emph{compact metric} group $G$ and a compact subgroup $H$, hence called a compact metric group extension.
\end{fact}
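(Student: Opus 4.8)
The assertion is the classical \emph{regularization} of a distal extension; the plan is to build $(\tilde X,T)$ explicitly, once by an enveloping-semigroup construction for the general (Hausdorff) statement and once by a bundle-of-isometries construction for the isometric (metric) refinement. Fix base points $x_0\in X$ and $y_0=\sigma(x_0)$ throughout.

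For the first part I take $\tilde X=E(X,T)$, the Ellis enveloping semigroup of $(X,T)$, on which $T$ acts by $\tau\cdot p=\tau^{*}\circ p$ with $\tau^{*}$ the homeomorphism $x\mapsto\tau x$. Since $(X,T)$ is distal, Ellis's theorem makes $E(X,T)$ a group, so $E(X,T)\circ p=E(X,T)$ for every $p$; together with continuity of the right translations $R_q\colon p\mapsto p\circ q$ this forces $\overline{\{\tau\cdot p:\tau\in T\}}\supseteq R_p\bigl(\overline{\{\tau^{*}:\tau\in T\}}\bigr)=E(X,T)$, i.e.\ $(E(X,T),T)$ is minimal. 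As a $T$-subflow of $X^{X}$ with $T$ acting coordinatewise on the values, it is a subflow of a product of copies of the distal flow $(X,T)$, hence distal; and each $R_q$ commutes with the $T$-action, so $q\mapsto R_q$ is a free embedding of $E(X,T)$ into $\mathrm{Aut}(E(X,T),T)$. Let $\pi\colon E(X,T)\to X$ be the evaluation $p\mapsto p\,x_0$ (surjective and $T$-equivariant, by minimality of $X$), let $\sigma_{*}\colon E(X,T)\to E(Y,T)$ be the induced group homomorphism, and set $\tilde\sigma=\sigma\circ\pi$, $G=\{R_q:\sigma_{*}(q)\,y_0=y_0\}$ and $H=\{R_q:q\,x_0=x_0\}$. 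A short computation with the group law of $E(X,T)$ identifies the $G$-orbit of $p$ with $\tilde\sigma^{-1}(\tilde\sigma(p))=\{r:\sigma(r x_0)=\sigma(px_0)\}$ and the $H$-orbit of $p$ with $\pi^{-1}(\pi(p))=p\cdot E(X,T)_{x_0}$; since $p\,x_0=x_0$ forces $\sigma_{*}(p)\,y_0=\sigma(x_0)=y_0$ we have $H\le G$, and all the asserted properties follow.

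For the isometric refinement one cannot use $E(X,T)$ directly (its metrizability is the issue), so suppose $\sigma$ is an isometric extension and $X$ is an $M_0$-bundle over $Y$ with compact homogeneous metric model space $M_0$ and $T$-invariant fibre metric $\rho$. Put $K_0=\mathrm{Isom}(M_0)$, a compact metric group in the sup metric, and form the bundle of frames $\widehat X=\{(y,\iota):y\in Y,\ \iota\colon M_0\to\sigma^{-1}(y)\ \text{an isometry}\}$, a compact metric space carrying a free right $K_0$-action $(y,\iota)\cdot k=(y,\iota\circ k)$ that is simply transitive on the fibres of $\widehat X\to Y$. Because the extension is isometric, each $\tau\in T$ restricts to an isometry $\sigma^{-1}(y)\to\sigma^{-1}(\tau y)$, so $\tau\cdot(y,\iota)=(\tau y,\ \tau|_{\sigma^{-1}(y)}\circ\iota)$ is a continuous $T$-action on $\widehat X$ commuting with the $K_0$-action, and $(\widehat X,T)$ is an isometric, hence distal, extension of the distal minimal flow $(Y,T)$. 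I then pass to a minimal subset $\tilde X\subseteq\widehat X$ (one exists by distality), put $G=\{k\in K_0:\tilde X\cdot k=\tilde X\}$ (a closed, hence compact metric, subgroup), $\pi\colon\tilde X\to X$ the map $(y,\iota)\mapsto\iota(m_0)$ for a fixed $m_0\in M_0$, and $H=G\cap\mathrm{Stab}_{K_0}(m_0)$. The image $\pi(\tilde X)$ and the image of $\tilde X$ in $Y$ are closed $T$-invariant subsets, hence equal to $X$ and $Y$ by minimality; restricting $\rho$ (equivalently, using the metric of $K_0$) exhibits $\tilde X$ as a $G$-bundle over $Y$ on which $T$ acts by fibrewise isometries, so $(\tilde X,T)$ is an isometric extension of $(Y,T)$ with compact metric structure group $G$ and compact subgroup $H$.

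The single point that is not formal --- and where I would invoke the hypotheses on the acting group, via \cite{MMWu}, together with the classical regularization results \cite{El} (12.12, 12.13, 14.26) --- is that on the minimal subset $\tilde X$ the smaller group $G$ still acts \emph{transitively} on each fibre $\tilde\sigma^{-1}(\tilde\sigma(\tilde x))$. In the enveloping-semigroup construction this is automatic from the group structure of $E(X,T)$, but on a minimal subset of the frame bundle it requires showing that the fibre of $\tilde X\to Y$ over $y_0$ is a single $G$-coset inside the $K_0$-torsor of frames; this is exactly where the homogeneity of $M_0$ and the structure theory of isometric extensions for a compactly generated acting group enter. Granting this, freeness of the $G$-action is inherited from that of the $K_0$-action, $H=\{k\in G:k\,m_0=m_0\}$ is recovered as the isotropy of $m_0$, and the identification of $\pi$-fibres with $H$-orbits is a direct check, completing the proof.
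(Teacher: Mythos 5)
The paper gives no argument for this Fact at all: it is quoted from the literature (Ellis 12.12, 12.13, 14.26 and McMahon--Wu, Proposition 1.1), so your proposal is by necessity a different route --- you actually construct the regulariser. Your first construction is sound: for a distal $(X,T)$ the enveloping semigroup $E(X,T)$ is a group, minimality of $(E(X,T),T)$ follows exactly as you say from $E\circ p=E$ together with continuity of the right translations, distality follows from embedding into the product flow on $X^{X}$, and your coset computations correctly identify the $\tilde\sigma$-fibres with the $G$-orbits and the $\pi$-fibres with the $H$-orbits; giving $G$ the uniform-convergence topology of $\textup{Aut}(\tilde X,T)$ (the topology the paper uses on automorphism groups) makes it a Hausdorff topological group, as required. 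This is close in spirit to Ellis's algebraic construction, while your second construction, the bundle of isometric frames with structure group $K_0=\textup{Isom}(M_0)$, is essentially the classical realisation of an isometric extension as a factor of a compact metric group extension, i.e. what \cite{MMWu}, Proposition 1.1, provides. What your version buys is self-containedness, at the price of some suppressed but routine verifications: compactness and metrisability of $\widehat X$ via Arzel\`a--Ascoli (which needs that $\rho$ and the ambient metric of $X$ are uniformly comparable on $R_\sigma$ --- true by compactness of $R_\sigma$ --- and that an isometric self-embedding of a compact metric space is onto), continuity of the $T$-action on $\widehat X$, and the identification of the fibres of $\tilde X\to Y$ as mutually isometric $G$-torsors.

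The one point you flag as ``not formal'' and outsource to the compactly generated hypothesis, the homogeneity of $M_0$, and the structure theory in \cite{MMWu} is in fact elementary and needs none of these. Transitivity of $G=\{k\in K_0:\tilde X\cdot k=\tilde X\}$ on the fibres of $\tilde X\to Y$ is a one-line minimality argument: if $\tilde x,\tilde x'\in\tilde X$ lie over the same $y\in Y$, simple transitivity of $K_0$ on the fibres of $\widehat X\to Y$ gives a unique $k\in K_0$ with $\tilde x'=\tilde x\cdot k$; the right translation by $k$ commutes with the $T$-action, so $\tilde X\cdot k$ is again a minimal subset, and it meets $\tilde X$ in $\tilde x'$, hence $\tilde X\cdot k=\tilde X$ and $k\in G$. (The compactly generated hypothesis and the results of \cite{MMWu} enter the paper elsewhere, for connectedness of fibres in the normal \emph{I}-system, not for this Fact.) With that argument inserted, freeness of $G$ on $\tilde X$ and the identification $H=G\cap\textup{Stab}_{K_0}(m_0)$ are inherited exactly as you say, and your proof of both halves of the statement is complete.
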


\begin{remark}\label{rem:d_n_m}
The construction above is also called the regulariser of an extension.
In \cite{Gl2} it is verified that a compact Hausdorff flow $(\tilde X,T)$ with these properties is metrisable if and only if the extension from $(Y,T)$ to $(X,T)$ is isometric.
\end{remark}

Studying the skew product extensions $\widetilde\tau_{f_\xi}:X_\xi\times\R\longrightarrow X_\xi\times\R$ for the ordinals $0\leq\xi\leq\eta$ will require the following technical lemma.

\begin{lemma}\label{lem:sub}
Let the minimal compact metric flow $(Z,T)$ be an extension of the flow $(Y,T)=\sigma(Z,T)$ and let $g(\tau,y)$ be a real-valued cocycle of $(Y,T)$.
Let $h(\tau,z)$ be a real-valued cocycle of $(Z,T)$ so that for every $\tau\in T$ and $z'\in Z$ the image of $\sigma^{-1}(\sigma(z'))$ under the function $z\mapsto h(\tau,z)$ is connected and includes zero.
Suppose that there exist a compact symmetric neighbourhood $K\subset\R$ of $0$ and $\varepsilon>0$ so that for all $z\in Z$ and $\tau\in T$ with $d_Z(z,\tau z)<\varepsilon$ holds $(g\circ\sigma+h)(\tau,z)\notin 2K\setminus K^0$.
Suppose that there exists a $\delta>0$ so that for all $\tau\in T$ and $z\in Z$ with $d_Z(z,\tau z)<\delta$ holds $d_Z(z',\tau z')<\varepsilon$ for every $z'\in\sigma^{-1}(\sigma(z))$.
Given $\bar z\in Z$ and a sequence $\{\bar\tau_k\}\subset T$ so that $\bar\tau_k \bar z$ converges and $(g\circ\sigma)(\bar\tau_k,\bar z)\to 0$ as $k\to\infty$, the sequence $\{(g\circ\sigma+h)(\bar\tau_k,\bar z)\}_{k\geq 1}$ is bounded.
Similarly, for a sequence $\{\bar\tau_k\}\subset T$ so that $\bar\tau_k\bar z$ converges and $(g\circ\sigma+h)(\bar\tau_k,\bar z)\to 0$, the sequence $\{(g\circ\sigma)(\bar\tau_k,\bar z)\}_{k\geq 1}$ is bounded.
\end{lemma}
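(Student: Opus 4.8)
The plan is to run the semigroup-free half of the argument used for Proposition~\ref{prop:isom}, with the isometry there replaced by the two fibrewise control hypotheses. Throughout, write $F=g\circ\sigma+h$, which is again a real valued cocycle of $(Z,T)$. The basic observation is that for a fixed $\tau\in T$ the function $z\mapsto(g\circ\sigma)(\tau,z)=g(\tau,\sigma(z))$ is constant on each fibre $\sigma^{-1}(\sigma(z))$, so that the set $\{F(\tau,z'):z'\in\sigma^{-1}(\sigma(z))\}$ is $(g\circ\sigma)(\tau,z)$ plus the image of that fibre under $h(\tau,\cdot)$; by continuity of $h(\tau,\cdot)$ and the hypothesis on $h$, this image is a compact interval containing $0$, so $\{F(\tau,z'):z'\in\sigma^{-1}(\sigma(z))\}$ is a compact interval containing simultaneously the value $F(\tau,z)$ (for any $z$ in the fibre) and the value $(g\circ\sigma)(\tau,z)$.

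First I would record an elementary fact about subsets of $\R$: a connected set $I\subset\R$ which meets $K^0$ and is disjoint from $2K\setminus K^0$ must be contained in $2K$. Indeed, since $K$ is a neighbourhood of $0$ one has $K\subset(2K)^0$, hence $\partial(2K)\subset 2K\setminus K^0$; a connected set meeting $K^0$ but not contained in $2K$ would meet both $(2K)^0$ and the complement of $2K$, and therefore also $\partial(2K)$, a contradiction.

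Now take the first sequence $\{\bar\tau_k\}$, with $\bar\tau_k\bar z$ convergent and $(g\circ\sigma)(\bar\tau_k,\bar z)\to 0$. Since $\{\bar\tau_k\bar z\}$ is Cauchy and $K^0$ is a neighbourhood of $0$, there is an $N$ with $d_Z(\bar\tau_k\bar z,\bar\tau_l\bar z)<\delta$ and $(g\circ\sigma)(\bar\tau_l,\bar z)-(g\circ\sigma)(\bar\tau_k,\bar z)\in K^0$ for all $k,l\geq N$. Fix such $k,l$ and put $\mu=\bar\tau_l\bar\tau_k^{-1}$, so $\mu(\bar\tau_k\bar z)=\bar\tau_l\bar z$; the $\delta$--$\varepsilon$ hypothesis gives $d_Z(z',\mu z')<\varepsilon$ for every $z'\in\sigma^{-1}(\sigma(\bar\tau_k\bar z))$, whence $F(\mu,z')\notin 2K\setminus K^0$ for every such $z'$. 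By the cocycle identity $(g\circ\sigma)(\mu,\bar\tau_k\bar z)=(g\circ\sigma)(\bar\tau_l,\bar z)-(g\circ\sigma)(\bar\tau_k,\bar z)\in K^0$, so the compact interval $\{F(\mu,z'):z'\in\sigma^{-1}(\sigma(\bar\tau_k\bar z))\}$ meets $K^0$ and is disjoint from $2K\setminus K^0$, hence is contained in $2K$ by the elementary fact; in particular its other distinguished point $F(\mu,\bar\tau_k\bar z)$, which by the cocycle identity equals $F(\bar\tau_l,\bar z)-F(\bar\tau_k,\bar z)$, lies in $2K$. Keeping $k=N$ and letting $l$ range over $l\geq N$ gives $F(\bar\tau_l,\bar z)\in F(\bar\tau_N,\bar z)+2K$, so the sequence $\{F(\bar\tau_k,\bar z)\}$ is bounded. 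For the second sequence the roles of $F$ and $g\circ\sigma$ in this last step are exchanged: now $F(\bar\tau_k,\bar z)\to 0$ forces $F(\mu,\bar\tau_k\bar z)=F(\bar\tau_l,\bar z)-F(\bar\tau_k,\bar z)\in K^0$ for $k,l\geq N$, the same interval is again contained in $2K$, and so $(g\circ\sigma)(\mu,\bar\tau_k\bar z)=(g\circ\sigma)(\bar\tau_l,\bar z)-(g\circ\sigma)(\bar\tau_k,\bar z)\in 2K$, which yields the boundedness of $\{(g\circ\sigma)(\bar\tau_k,\bar z)\}$.

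I expect the only genuinely delicate point to be the bookkeeping in the third paragraph: one must ensure that a single interval, $\{F(\mu,z'):z'\in\sigma^{-1}(\sigma(\bar\tau_k\bar z))\}$, simultaneously contains the near-zero value coming from one of the two cocycles and the value to be bounded coming from the other. This is precisely what the ``connected and includes zero'' hypothesis on the fibrewise image of $h(\tau,\cdot)$ supplies; apart from that, the proof uses only the cocycle identity and the convergence of $\bar\tau_k\bar z$.
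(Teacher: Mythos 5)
Your proof is correct and follows essentially the same approach as the paper: both fix $k,k'\geq k_0$ (your $N$), pass to the element $\bar\tau_{k'}\bar\tau_k^{-1}$, and use the connectedness of the fibrewise image of $g\circ\sigma+h$ together with the avoidance of $2K\setminus K^0$ to trap the increments. The only cosmetic difference is that the paper concludes the fibrewise range lies in $K^0$ (via the disjoint open cover $K^0\cup(\R\setminus 2K)$), while you settle for the weaker containment in $2K$; both suffice for boundedness.
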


\begin{proof}
There exists a $k_0\geq 1$ so that for all $k,k'\geq k_0$ holds $d_Z(\bar\tau_k\bar z,\bar\tau_{k'}\bar z)<\delta$ and
\begin{equation*}
(g\circ\sigma)(\bar\tau_{k'},\bar z)-(g\circ\sigma)(\bar\tau_k,\bar z)=(g\circ\sigma)(\bar\tau_{k'}\bar\tau_{k}^{-1},\bar\tau_k \bar z)\in K^0 .
\end{equation*}
By the choice of $K$, $\varepsilon$, and $\delta$ follows that $(g\circ\sigma+h)(\bar\tau_{k'}\bar\tau_k^{-1},z)\notin 2K\setminus K^0$ for all $z\in\sigma^{-1}(\sigma(\bar\tau_k\bar z))$.
Since the range of $(g\circ\sigma+h)(\bar\tau_{k'}\bar\tau_k^{-1},z)$ on the fibre $\sigma^{-1}(\sigma(\bar\tau_k\bar z))$ is connected and intersects $K^0$, we can conclude that $(g\circ\sigma+h)(\bar\tau_{k'}\bar\tau_k^{-1},\bar\tau_k \bar z)\in K^0$ for all $k,k'\geq k_0$.
Therefore the sequence $\{(g\circ\sigma+h)(\bar\tau_k,\bar z)\}_{k\geq 1}$ is bounded.

Provided a sequence $\{\bar\tau_k\}\subset T$ with convergent $\bar\tau_k\bar z$ and $(g\circ\sigma+h)(\bar\tau_k,\bar z)\to 0$, there exists an integer $k_0\geq 1$ so that for all $k,k'\geq k_0$ holds $d_Z(\bar\tau_k\bar z,\bar\tau_{k'}\bar z)<\delta$ and $(g\circ\sigma+h)(\bar\tau_{k'}\bar\tau_{k}^{-1},\bar\tau_k \bar z)\in K^0$.
We conclude as above that $(g\circ\sigma+h)(\bar\tau_{k'}\bar\tau_k^{-1},z)\in K^0$ for all $k,k'\geq k_0$ and $z\in\sigma^{-1}(\sigma(\bar\tau_k\bar z))$.
Since $h(\bar\tau_{k'}\bar\tau_k^{-1},z)=0$ for some $z\in\sigma^{-1}(\sigma(\bar\tau_k\bar z))$, the sequence $\{(g\circ\sigma)(\bar\tau_k,\bar z)\}_{k\geq 1}$ is bounded.
\end{proof}

At first the step from an ordinal to its successor by an isometric extension shall be considered.
The ``local'' behaviour within the fibres of a compact group extension is similar to a skew product extension by a compact metric group, even if the global structure might be different since it does not necessarily split into a product.

\begin{lemma}\label{lem:c_t}
Let $\gamma$ be an ordinal with $1\leq\gamma<\eta$.
If there exists a sequence $\{(\tau_k,x_k)\}_{k\geq 1}\subset T\times X_{\gamma+1}$ with $d_{\gamma+1} (x_k,\tau_k x_k)\to 0$ so that $(f_\gamma\circ\pi_\gamma^{\gamma+1})(\tau_k,x_k)\to 0$ and $f_{\gamma+1}(\tau_k,x_k)\nrightarrow 0$ as $k\to\infty$ (or equivalently $(f_{\gamma+1}\circ\pi_\gamma^{\gamma+1})(\tau_k,x_k)\nrightarrow 0$ and $f_\gamma(\tau_k,x_k)\to 0$), then the skew product $\widetilde\tau_{f_{\gamma+1}}$ is necessarily point transitive.
Therefore, if $f_\gamma(\tau,x_\gamma)$ is transient, then $f_{\gamma+1}(\tau,x_{\gamma+1})$ is either transient or the skew product $\widetilde\tau_{f_{\gamma+1}}$ is point transitive.
\end{lemma}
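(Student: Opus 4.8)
The plan is to isolate two facts already established above -- the fibres of $\sigma:=\pi_\gamma^{\gamma+1}$ are connected (here the hypothesis $\gamma\geq 1$ enters, via normality of the $I$-system and \cite{MMWu}), and the cocycle $h:=f_{\gamma+1}-f_\gamma\circ\sigma$ vanishes somewhere on each such fibre (from the relatively invariant measure supported on a connected fibre) -- and to combine them with the essential-value machinery. Put together: for every $\tau\in T$ and every $x_\gamma\in X_\gamma$ the continuous function $x_{\gamma+1}\mapsto h(\tau,x_{\gamma+1})$ carries the connected fibre $\sigma^{-1}(x_\gamma)$ onto a compact interval containing $0$. This intermediate-value behaviour of $h$ along fibres is what drives the argument.

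For the first assertion I would show directly that $E(f_{\gamma+1})=\R$, which by Fact \ref{fact:er} already gives point transitivity of $\widetilde\tau_{f_{\gamma+1}}$. Given the sequence, pass to a subsequence so that $f_{\gamma+1}(\tau_k,x_k)\to c\in(\R\setminus\{0\})\cup\{\infty\}$ (fixing a sign if $c=\infty$); since $(f_\gamma\circ\sigma)(\tau_k,x_k)\to 0$ we also have $h(\tau_k,x_k)\to c$. Fix any real $t$ strictly between $0$ and $c$. For $k$ large the interval onto which $h(\tau_k,\cdot)$ maps $\sigma^{-1}(\sigma(x_k))$ contains both $0$ and $h(\tau_k,x_k)$, hence contains $t$, so there is $x_k'\in\sigma^{-1}(\sigma(x_k))$ with $h(\tau_k,x_k')=t$; as $\sigma(x_k')=\sigma(x_k)$ this gives $f_{\gamma+1}(\tau_k,x_k')=(f_\gamma\circ\sigma)(\tau_k,x_k)+t\to t$. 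Provided also $d_{\gamma+1}(x_k',\tau_k x_k')\to 0$, Lemma \ref{lem:trans} puts $t$ in $E(f_{\gamma+1})$; letting $t$ range over that interval (over all of $\R$ if $c=\infty$) makes $E(f_{\gamma+1})$ a closed subgroup containing an interval, hence all of $\R$. Since the two displayed hypotheses on $(\tau_k,x_k)$ differ only by the substitution $f_{\gamma+1}=f_\gamma\circ\sigma+h$, the ``equivalently'' clause is covered automatically, and the final sentence (transience of $f_\gamma$) follows once the second assertion is proved.

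The step that needs work is the claim $d_{\gamma+1}(x_k',\tau_k x_k')\to 0$, i.e. the \emph{uniform fibre-displacement} statement that $\sup\{d_{\gamma+1}(x',\tau x'):x'\in\sigma^{-1}(\sigma(x))\}\to 0$ as $d_{\gamma+1}(x,\tau x)\to 0$. I expect this to be the main obstacle: it is genuinely false for an arbitrary isometric extension in the weak bundle sense used here -- an isometry of a fibre can fix the point $x$ yet displace the rest of the fibre by a definite amount (a rotation of $\mathbb S^2$ about an axis through $x$) -- so the proof has to pass to the compact group extension regulariser $(\tilde X_{\gamma+1},T)$ of Fact \ref{fact:reg_ex}, which is metric precisely because the extension is isometric. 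There $\tau$ acts on each fibre over $X_\gamma$ as translation by a single element of the compact structure group, independent of the point in the fibre, so uniform continuity of that group's action gives the statement on $\tilde X_{\gamma+1}$; the cocycle $f_{\gamma+1}$ and the recurrence data are then transported through the (open, because distal) factor map $\tilde X_{\gamma+1}\to X_{\gamma+1}$ as in the last part of Lemma \ref{lem:trans}, the argument of the previous paragraph is run upstairs (where $h$ still has the fibrewise interval property), and point transitivity descends by projecting transitive points. One can instead avoid mentioning $E(f_{\gamma+1})$ and run the same fibre/intermediate-value step against the forbidden annulus $2K\setminus K^0$ furnished by Lemma \ref{lem:at}; the geometric content is identical.

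For the second assertion I would argue by contraposition: suppose $f_{\gamma+1}$ is recurrent but $\widetilde\tau_{f_{\gamma+1}}$ is not point transitive, and deduce that $f_\gamma$ is recurrent. Work on the regulariser $\tilde X_{\gamma+1}$ again. Non-point-transitivity lets Lemma \ref{lem:at} produce a compact symmetric $K$ and an $\varepsilon>0$, and the fibre-displacement statement supplies the matching $\delta$, so all hypotheses of Lemma \ref{lem:sub} hold with $Z=\tilde X_{\gamma+1}$, $Y=X_\gamma$, $g=f_\gamma$, and $h$ lifted. Recurrence of $f_{\gamma+1}$ gives a base point $\bar x$ whose $\widetilde\tau_{f_{\gamma+1}}$-return sets near $(\bar x,0)$ are extensive; applying the second conclusion of Lemma \ref{lem:sub} to every return sequence (and a short compactness argument) yields a \emph{uniform} bound on $\{f_\gamma(\bar\tau,\sigma\bar x)\}$ over those extensive return sets. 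Consequently the fibre $\{\sigma\bar x\}\times\R$ in $X_\gamma\times\R$ is regionally $\widetilde\tau_{f_\gamma}$-recurrent, hence $f_\gamma$ is recurrent by the bounded-return mechanism of Remarks \ref{rems:rec}, contradicting its transience. Thus $f_\gamma$ transient forces $f_{\gamma+1}$ to be transient or point transitive. The only delicate point throughout is the fibre-displacement estimate and the consequent passage to the regulariser; the intermediate-value step, the invocations of Lemmas \ref{lem:trans}, \ref{lem:at}, \ref{lem:sub}, and the lifting/projection bookkeeping are then routine.
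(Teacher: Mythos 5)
Your proposal correctly identifies the geometric engine that drives the paper's argument: the $\pi_\gamma^{\gamma+1}$-fibres are connected (normality of the $I$-system plus \cite{MMWu}), the deviation cocycle $h=f_{\gamma+1}-f_\gamma\circ\pi_\gamma^{\gamma+1}$ vanishes somewhere on each fibre (from the \emph{RIM}), hence $h(\tau,\cdot)$ has the intermediate-value property along fibres, and the fibre-displacement estimate holds only after passing to the compact metric group extension regulariser of Fact \ref{fact:reg_ex}, where uniform equicontinuity of the structure group $G$ gives $d_Z(z,\tau z)<\delta\Rightarrow d_Z(g z,\tau g z)<\varepsilon$ uniformly in $g\in G$. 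This is exactly what the paper does. For the first assertion you compute $E(f_{\gamma+1})=\R$ directly and cite Fact \ref{fact:er}, whereas the paper feeds the same ingredients into Lemma \ref{lem:sub} (the annulus-avoidance version) and the limit from Lemma \ref{lem:trans} to reach a contradiction; as you yourself observe, these are two phrasings of one argument and both are valid.

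The second assertion is where I see a genuine gap. The paper does \emph{not} re-invoke Lemma \ref{lem:sub}: from the transience of $f_\gamma$ it extracts a replete semigroup $P$ avoiding a neighbourhood of $(x',0)$ under $\widetilde\tau_{f_\gamma\circ\pi_\gamma^{\gamma+1}}$, and then, using the extensiveness of the $\widetilde\tau_{f_{\gamma+1}}$-return times of $(x',0)$ together with Theorem 6.32 of \cite{G-H} to stay inside $P$ while escaping compacta, inductively produces a sequence $\{\tau_k\}\subset P$ with $\tau_k x'\to x'$, $f_{\gamma+1}(\tau_k,x')\to 0$, and $(f_\gamma\circ\pi_\gamma^{\gamma+1})(\tau_k,x')\nrightarrow 0$ --- i.e.\ a witness for the alternative hypothesis --- and then simply applies the first assertion. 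Your contrapositive route replaces this by: Lemma \ref{lem:sub} gives boundedness of $f_\gamma(\tau,\sigma\bar x)$ on the returns, and ``consequently'' $\{\sigma\bar x\}\times\R$ is regionally $\widetilde\tau_{f_\gamma}$-recurrent. That last step is not justified. Boundedness of $f_\gamma$ over an extensive return set does not directly yield a regionally recurrent point for $\widetilde\tau_{f_\gamma}$: one must pin down an actual cluster value and show that the returns \emph{to that value} are still extensive, which requires a pigeonhole over replete semigroups that your sketch does not provide (finite covers of a compact set do not obviously preserve extensiveness without checking that finite intersections of replete semigroups are replete and that the relevant translates remain semigroups). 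The cleanest way to close the gap is exactly the move the paper makes: deduce from transience a sequence satisfying hypothesis (B) and feed it to the first assertion, rather than trying to manufacture recurrence of $f_\gamma$ from boundedness.

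One small further remark: the two displayed hypotheses in the statement are not literally equivalent and the paper's ``equivalently'' is best read as ``or alternatively''; your intermediate-value argument handles both cases by symmetry (interchanging the roles of $f_\gamma\circ\pi_\gamma^{\gamma+1}$ and $h$), which is fine, but the phrase ``covered automatically by the substitution'' overstates it.
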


\begin{proof}
Suppose that $\widetilde\tau_{f_{\gamma+1}}$ is not point transitive and let $G\subset\textup{Aut}(Z,T)$ define a compact metric group extension of $(X_\gamma, T)$ with $(X_{\gamma+1},T)=\pi(Z,T)$.
Then the skew product extension $\widetilde\tau_{f_{\gamma+1}\circ\pi}$ of the flow $(Z,T)$ is also not point transitive, and Lemma \ref{lem:at} provides $K\subset\R$ and $\varepsilon>0$.
Since $G$ acts uniformly equicontinuous, there exists a $\delta>0$ so that for all $z\in Z$ and $\tau\in T$ with $d_Z (z,\tau z)<\delta$ follows $d_Z(k(z),k(\tau z))=d_Z(k(z),\tau k(z))<\varepsilon$ for all $k\in K$.
For every $z\in Z$ the $G$-orbit of $z$ is all of the fibre $(\pi_\gamma^{\gamma+1}\circ\pi)^{-1}((\pi_\gamma^{\gamma+1}\circ\pi)(z))$.
Since the $\pi^{\gamma+1}_\gamma$-fibres are connected, for every $\tau\in T$ and $z'\in Z$ the range of $(f_{\gamma+1}-f_\gamma\circ\pi^{\gamma+1}_\gamma)(\tau,\pi(z))$ on the fibre $(\pi_\gamma^{\gamma+1}\circ\pi)^{-1}((\pi_\gamma^{\gamma+1}\circ\pi)(z'))$ is connected and contains zero.
Hence Lemma \ref{lem:sub} applies with $(Y,T)=(X_\gamma,T)$, $\sigma=\pi_\gamma^{\gamma+1}\circ\pi$, $g=f_\gamma$, and $h(\tau,z)=(f_{\gamma+1}-f_\gamma\circ\pi^{\gamma+1}_\gamma)(\tau,\pi(z))$.

However, given the sequence $\{(\tau_k,x_k)\}_{k\geq 1}\subset T\times X_{\gamma+1}$ in the hypothesis, Lemma \ref{lem:trans} provides a point $\bar x\in X_{\gamma+1}$ and a sequence $\{\bar\tau_k\}\subset T$ so that $(f_\gamma\circ\pi_\gamma^{\gamma+1})(\bar\tau_k,\bar x)\to 0$ and $f_{\gamma+1}(\bar\tau_k,\bar x)\to\infty$ (or $(f_\gamma\circ\pi_\gamma^{\gamma+1})(\bar\tau_k,\bar x)\to\infty$ and $f_{\gamma+1}(\bar\tau_k,\bar x)\to 0$).
By choosing a point $\bar z\in Z$ with $\pi(\bar z)=\bar x$ and changing to a subsequence of $\{\bar\tau_k\}\subset T$ with $\bar\tau_k\bar z$ convergent, this contradicts to Lemma \ref{lem:sub}.

Now suppose that $f_\gamma(\tau,x_\gamma)$ is transient and $f_{\gamma+1}(\tau,x_{\gamma+1})$ is recurrent.
Let $x'\in X_{\gamma+1}$ be so that $(x',0)$ is $\widetilde\tau_{f_{\gamma+1}}$-recurrent (cf. Remarks \ref{rems:rec}).
Since $(x',0)$ is cannot be $\widetilde\tau_{f_\gamma\circ\pi_\gamma^{\gamma+1}}$-recurrent, there exist a neighbourhood $V\subset X_{\gamma+1}\times\R$ of $(x',0)$ and a replete semigroup $P\subset T$ so that $\widetilde\tau_{f_\gamma\circ\pi_\gamma^{\gamma+1}}(x',0)\notin V$ for every $\tau\in P$.
Given an arbitrary compact set $C\subset T$, by Theorem 6.32 in \cite{G-H} there exists a replete semigroup $Q\subset P\setminus C$.
Since $(x',0)$ is $\widetilde\tau_{f_{\gamma+1}}$-recurrent, we can inductively construct a sequence $\{\tau_k\}_{k\geq 1}\subset P$ with $\tau_k x'\to x'$, $f_{\gamma+1}(\tau_k,x')\to0$, and $(f_\gamma\circ\pi^{\gamma+1}_\gamma)(\tau_k,x')\nrightarrow 0$.
The point transitivity of $\widetilde\tau_{f_{\gamma+1}}$ follows by the preceding statement.
\end{proof}

Furthermore, we shall study the case of transfinite induction to a limit ordinal.
The arguments are quite similar, however with an approximation of a limit ordinal instead of an isometric group extension.

\begin{lemma}\label{lem:lim}
Suppose that $\gamma$ is a limit ordinal with $1<\gamma\leq\eta$.
\begin{enumerate}
\item If for every ordinal $1<\alpha<\gamma$ there exists an ordinal $\alpha\leq\xi<\gamma$ so that $f_\xi(\tau,x_\xi)$ has a point transitive skew product extension, then $f_\gamma(\tau,x_\gamma)$ has a point transitive skew product extension.
\item If there exist an ordinal $1\leq\alpha<\gamma$ and a sequence $\{(\tau_k,x_k)\}_{k\geq 1}\subset T\times X_\gamma$ with $d_\gamma (x_k,\tau_k x_k)\to 0$  so that $(f_\xi\circ\pi_\xi^\gamma)(\tau_k,x_k)\to 0$ for every $\alpha\leq\xi<\gamma$ and $f_\gamma(\tau_k,x_k)\nrightarrow 0$ as $k\to\infty$ (or equivalently $(f_\xi\circ\pi_\xi^\gamma)(\tau_k,x_k)\nrightarrow 0$ for every $\alpha\leq\xi<\gamma$ and $f_\gamma(\tau_k,x_k)\to 0$), then $\widetilde\tau_{f_\gamma}$ is necessarily point transitive.
\item If there exists an ordinal $1\leq\alpha<\gamma$ so that for all $\alpha\leq\xi<\gamma$ the cocycle $f_\xi(\tau,x_\xi)$ is transient, then $f_\gamma(\tau,x_\gamma)$ is either transient or its skew product extension is point transitive.
\end{enumerate}
\end{lemma}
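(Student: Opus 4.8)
The plan is to transfer the proof of Lemma~\ref{lem:c_t} to the limit‑ordinal situation, treating part~(ii) as the analogue of the first assertion of Lemma~\ref{lem:c_t}, part~(iii) as the analogue of its last assertion, and proving part~(i) by the same apparatus. The one ingredient of Lemma~\ref{lem:c_t} with no immediate counterpart is the uniform equicontinuity of the compact group acting on the fibres of an isometric extension, which furnished the third hypothesis of Lemma~\ref{lem:sub}; in its place I would use that $(X_\gamma,T)=\varprojlim_{\xi<\gamma}(X_\xi,T)$, so the equivalence relations $R_{\pi_\xi^\gamma}=\{(z,z'):\pi_\xi^\gamma(z)=\pi_\xi^\gamma(z')\}$ form a directed family of closed subsets of $X_\gamma\times X_\gamma$ with intersection the diagonal. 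A compact subset of $X_\gamma\times X_\gamma$ off the diagonal is then off some $R_{\pi_\xi^\gamma}$; in particular, for every $\varepsilon>0$ there is an ordinal $\xi<\gamma$ — which can be chosen in $[\alpha,\gamma)$ and as large as desired, since the fibres of $\pi_\xi^\gamma$ only shrink as $\xi$ grows — such that every fibre of $\pi_\xi^\gamma$ has $d_\gamma$‑diameter less than $\varepsilon$. Whenever the $\pi_\xi^\gamma$‑fibres have diameter $<\varepsilon/3$ and $\delta:=\varepsilon/3$, the third hypothesis of Lemma~\ref{lem:sub} with $\sigma=\pi_\xi^\gamma$ holds by the triangle inequality: if $d_\gamma(z,\tau z)<\delta$ and $\pi_\xi^\gamma(z')=\pi_\xi^\gamma(z)$, then $d_\gamma(z',z)<\varepsilon/3$, and $d_\gamma(\tau z,\tau z')<\varepsilon/3$ because $\pi_\xi^\gamma(\tau z')=\pi_\xi^\gamma(\tau z)$, so $d_\gamma(z',\tau z')<\varepsilon$. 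This is the device that replaces fibre equicontinuity.

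For part~(ii) I would assume $\widetilde\tau_{f_\gamma}$ is not point transitive and apply Lemma~\ref{lem:at} to $(X_\gamma,T)$ and $f_\gamma$, obtaining a compact symmetric $K\subset\R$ and $\varepsilon>0$ with $f_\gamma(\tau,z)\notin 2K\setminus K^0$ whenever $d_\gamma(z,\tau z)<\varepsilon$. Choose $\zeta\in[\alpha,\gamma)$ with $\pi_\zeta^\gamma$‑fibre‑diameter $<\varepsilon/3$; since $\zeta\ge\alpha\ge1$ the $\pi_\zeta^\gamma$‑fibres are connected. Apply Lemma~\ref{lem:sub} with $(Y,T)=(X_\zeta,T)$, $(Z,T)=(X_\gamma,T)$, $\sigma=\pi_\zeta^\gamma$, $g=f_\zeta$ and $h=f_\gamma-f_\zeta\circ\pi_\zeta^\gamma$: its first hypothesis holds because $z\mapsto h(\tau,z)$ is continuous on each $\pi_\zeta^\gamma$‑fibre, with connected domain, and vanishes somewhere on it; the second is the conclusion of Lemma~\ref{lem:at}; and the third is the triangle estimate with $\delta=\varepsilon/3$. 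Then I would pass the hypothesis sequence of part~(ii) through Lemma~\ref{lem:trans} read as the $\R^2$‑valued cocycle $(f_\zeta\circ\pi_\zeta^\gamma,f_\gamma)$ — which is legitimate because $\zeta\ge\alpha$, so $(f_\zeta\circ\pi_\zeta^\gamma)(\tau_k,x_k)\to0$ is part of the hypothesis — obtaining $\bar x\in X_\gamma$ and $\{\bar\tau_k\}\subset T$ with $\bar\tau_k\bar x\to\bar x$, $(f_\zeta\circ\pi_\zeta^\gamma)(\bar\tau_k,\bar x)\to0$ and $|f_\gamma(\bar\tau_k,\bar x)|\to\infty$; this contradicts the boundedness asserted by Lemma~\ref{lem:sub}. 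The equivalent form, with the roles of $f_\zeta\circ\pi_\zeta^\gamma$ and $f_\gamma$ exchanged, is handled symmetrically using the second conclusion of Lemma~\ref{lem:sub}. Hence $\widetilde\tau_{f_\gamma}$ is point transitive.

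For part~(iii): suppose $f_\xi$ is transient for $\alpha\le\xi<\gamma$ and, for contradiction, that $f_\gamma$ is recurrent but $\widetilde\tau_{f_\gamma}$ not point transitive. Fix a $\widetilde\tau_{f_\gamma}$‑recurrent point $(x',0)$. By Remarks~\ref{rems:rec}, transience of $f_\xi$ forces that no point of $X_\xi\times\R$ is regionally $\widetilde\tau_{f_\xi}$‑recurrent, so for each $\xi\in[\alpha,\gamma)$ there are a neighbourhood $W_\xi\times(-\varepsilon_\xi,\varepsilon_\xi)$ of $(\pi_\xi^\gamma(x'),0)$ and a replete semigroup $P_\xi\subset T$ with $\widetilde\tau_{f_\xi}(\pi_\xi^\gamma(x'),0)\notin W_\xi\times(-\varepsilon_\xi,\varepsilon_\xi)$ for every $\tau\in P_\xi$. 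Since $[\alpha,\gamma)$ is countable, I would enumerate the semigroups $P_\xi$ and, using the recurrence of $(x',0)$ together with Theorem~6.32 in~\cite{G-H} to stay inside a prescribed replete semigroup while escaping any given compact set, construct $\{\tau_k\}\subset T$ leaving every compact set, with $\tau_k x'\to x'$, $f_\gamma(\tau_k,x')\to0$, and with each $P_\xi$ containing $\tau_k$ for infinitely many $k$. Then $d_\gamma(x',\tau_k x')\to0$ and $f_\gamma(\tau_k,x')\to0$, while for every $\xi\in[\alpha,\gamma)$ the infinitely many $k$ with $\tau_k\in P_\xi$ and $\tau_k\pi_\xi^\gamma(x')\in W_\xi$ give $|(f_\xi\circ\pi_\xi^\gamma)(\tau_k,x')|=|f_\xi(\tau_k,\pi_\xi^\gamma(x'))|\ge\varepsilon_\xi$, so $(f_\xi\circ\pi_\xi^\gamma)(\tau_k,x')\nrightarrow0$. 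This is exactly the second form of the hypothesis of part~(ii) with this $\alpha$, so $\widetilde\tau_{f_\gamma}$ is point transitive, a contradiction; hence $f_\gamma$ is transient or point transitive.

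For part~(i) I would again assume $\widetilde\tau_{f_\gamma}$ not point transitive, obtain $K$ and $\varepsilon$ from Lemma~\ref{lem:at}, and — using that the point‑transitive ordinals are cofinal in $\gamma$ while the $\pi_\zeta^\gamma$‑fibres shrink — choose $\zeta\in[1,\gamma)$ with $\widetilde\tau_{f_\zeta}$ point transitive and $\pi_\zeta^\gamma$‑fibre‑diameter $<\varepsilon/3$, setting up Lemma~\ref{lem:sub} as in part~(ii); one checks that its boundedness conclusion survives when ``$\to0$'' is weakened to ``converges to a finite limit'', since the proof only uses that consecutive differences tend to $0$. Fixing a transitive point $(\bar y,0)$ of $\widetilde\tau_{f_\zeta}$ and a lift $\bar z\in(\pi_\zeta^\gamma)^{-1}(\bar y)$, I would show $(\bar z,0)$ is transitive for $\widetilde\tau_{f_\gamma}$, so $E(f_\gamma)=\R$: given $(z^*,s^*)\in X_\gamma\times\R$, first use the density of the $\widetilde\tau_{f_\zeta}$‑orbit of $(\bar y,0)$ in $X_\zeta\times\R$ to bring the $X_\zeta$‑coordinate and the real coordinate near their targets, then correct the fibre coordinate toward $z^*$ by minimality of $(X_\gamma,T)$; the point is that along any return sequence on which the $X_\zeta$‑coordinate stabilises, Lemma~\ref{lem:sub} keeps $f_\gamma$ bounded, so the fibre‑correction shifts the real coordinate only boundedly, and this slack is absorbed by choosing the first approximation suitably. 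The main obstacle throughout is precisely the third hypothesis of Lemma~\ref{lem:sub} — free in the successor case from fibre equicontinuity — and the uniform shrinking of the inverse‑limit fibres from the first paragraph is what supplies it; the genuinely delicate point, confined to part~(i), is coordinating the ``approach in the base'' with the ``fibre correction'' so that the real coordinate lands where intended, which I expect to be a matter of careful bookkeeping of the bounded error terms supplied by Lemma~\ref{lem:sub} rather than any new idea.
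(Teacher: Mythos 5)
Parts (ii) and (iii) of your proposal are essentially sound. Your treatment of part (ii) coincides with the paper's: replace the fibre equicontinuity of a compact group extension by the shrinking of inverse-limit fibres (choosing $\zeta\in[\alpha,\gamma)$ so that $\pi_\zeta^\gamma$-fibres have $d_\gamma$-diameter $<\varepsilon/3$ supplies the third hypothesis of Lemma~\ref{lem:sub} with $\delta=\varepsilon/3$), apply Lemma~\ref{lem:sub} with $\sigma=\pi_\zeta^\gamma$, $g=f_\zeta$, $h=f_\gamma-f_\zeta\circ\pi_\zeta^\gamma$, and contradict it via Lemma~\ref{lem:trans}. For part (iii) you route through part (ii) by diagonalizing over the countably many replete semigroups $P_\xi$ associated to all $\xi\in[\alpha,\gamma)$; this works, but it is more elaborate than the paper's argument, which picks the \emph{single} ordinal $\zeta$ already chosen for the small fibres, produces a replete semigroup from the non-recurrence of $(x',0)$ under $\widetilde\tau_{f_\zeta\circ\pi_\zeta^\gamma}$, extracts via Lemma~\ref{lem:trans} a point $\bar x$ and a sequence $\{\bar\tau_k\}$ with $(f_\gamma,f_\zeta\circ\pi_\zeta^\gamma)(\bar\tau_k,\bar x)\to(0,\infty)$ and $\bar\tau_k\bar x\to\bar x$, and contradicts Lemma~\ref{lem:sub} directly.

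Part (i), however, has a genuine gap. You propose to show that a lift $\bar z\in(\pi_\zeta^\gamma)^{-1}(\bar y)$ of a $\widetilde\tau_{f_\zeta}$-transitive point is itself $\widetilde\tau_{f_\gamma}$-transitive by ``first approaching $(\pi_\zeta^\gamma(z^*),s^*)$ in the base, then correcting the fibre coordinate toward $z^*$ by minimality of $(X_\gamma,T)$,'' with the slack shift ``absorbed by choosing the first approximation suitably.'' This step does not go through as sketched. Minimality of $(X_\gamma,T)$ produces $\tau$ with $\tau z\approx z^*$ for any $z$ near the fibre of $z^*$, but it gives no control whatever on $(f_\zeta\circ\pi_\zeta^\gamma)(\tau,z)$; Lemma~\ref{lem:sub} only bounds $f_\gamma$ along return sequences on which the $X_\zeta$-coordinate \emph{and} the real coordinate $f_\zeta\circ\pi_\zeta^\gamma$ stabilise, so you would need to control $f_\zeta$ along the fibre-correcting $\tau$'s, which is exactly what is missing. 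Moreover, even granting a bounded shift, the size of that shift depends on the fibre correction, which depends on where the base-approximation landed, which you would need to have chosen already knowing the shift --- a circularity that no amount of bookkeeping resolves without a genuinely new idea. The paper's argument for (i) is much shorter and sidesteps this entirely: since $\widetilde\tau_{f_\zeta}$ is point transitive, $E(f_\zeta)=\R$ by Fact~\ref{fact:er}, so one finds $(\tau,x_\zeta)$ with $f_\zeta(\tau,x_\zeta)\in 2K\setminus K^0$ and $d_\gamma(x'_\gamma,\tau x'_\gamma)<\delta$ at some lift $x'_\gamma\in(\pi_\zeta^\gamma)^{-1}(x_\zeta)$; then every $x_\gamma$ in that fibre has $d_\gamma(x_\gamma,\tau x_\gamma)<\varepsilon$, and because the fibre is connected and $(f_\gamma-f_\zeta\circ\pi_\zeta^\gamma)(\tau,\cdot)$ vanishes somewhere on it, there is $x_\gamma$ in the fibre with $f_\gamma(\tau,x_\gamma)=f_\zeta(\tau,x_\zeta)\in 2K\setminus K^0$, contradicting the conclusion of Lemma~\ref{lem:at}. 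You should replace your sketch of (i) by this argument.
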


\begin{proof}
Suppose that the skew product of $\tau_{f_\gamma}$ on $X_\gamma\times\R$ is not point transitive, and let $K\subset\R$ and $\varepsilon>0$ be provided by Lemma \ref{lem:at}.
Since $\gamma$ is a limit ordinal and $(X_\gamma,T)$ is the inverse limit of the flows $\{(X_\xi,T):0\leq\xi<\gamma\}$, we can choose an ordinal $\zeta<\gamma$ so that for all $x,x'\in X_\gamma$ with $\pi_\zeta^\gamma (x)=\pi_\zeta^\gamma (x')$ holds $d_\gamma (x,x')<\varepsilon/3$.
If we put $\delta=\varepsilon/3$, then $d_\gamma(x',\tau x')<\delta$ for $x'\in X_\gamma$ and $\tau\in T$ implies $d_\gamma (x,\tau x)<\varepsilon$ for all $x\in(\pi^\gamma_\zeta)^{-1}(\pi^\gamma_\zeta(x'))$.
These conditions remain valid even if the ordinal $\zeta$ will be increased later.
Since the $\pi_\zeta^{\gamma}$-fibres are connected, for every $\tau\in T$ and $x'_\gamma\in X_\gamma$ the range of $(f_{\gamma}-f_\zeta\circ\pi^{\gamma}_\zeta)(\tau,x_\gamma)$ on the $\pi_\zeta^{\gamma}$-fibre of $x'_\gamma$ is connected and contains $0$.

Under the hypothesis (i), we can choose $(\tau,x_\zeta)\in T\times X_\zeta$ so that $f_\zeta(\tau,x_\zeta)\in 2K\setminus K^0$ and $d_\gamma(x'_\gamma,\tau x'_\gamma)<\delta$ for some $x_\gamma'\in(\pi_\zeta^{\gamma})^{-1}(x_\zeta)$.
Thus $d_\gamma(x_\gamma,\tau x_\gamma)<\varepsilon$ holds for all $x_\gamma\in(\pi_\zeta^{\gamma})^{-1}(x_\zeta)$, and for $x_\gamma$ with $(f_{\gamma}-f_\zeta\circ\pi^{\gamma}_\zeta)(\tau,x_\gamma)=0$ this contradicts $f_\gamma(\tau,x_\gamma)\notin 2K\setminus K^0$.
Thus assertion (i) is verified.

We apply then Lemma \ref{lem:sub} with $(Z,T)=(X_\gamma,T)$, $(Y,T)=(X_\zeta,T)$, $\sigma=\pi_\zeta^{\gamma}$, $h=(f_{\gamma}-f_\zeta\circ\pi^{\gamma}_\zeta)$, and $g=f_\zeta$.
However, given the sequence $\{(\tau_k,x_k)\}_{k\geq 1}\subset T\times X_\gamma$ in hypothesis (ii), Lemma \ref{lem:trans} provides a point $\bar x\in X_\gamma$ and a sequence $\{\bar\tau_k\}\subset T$ so that $(f_\zeta\circ\pi_\zeta^{\gamma},f_{\gamma}-f_\zeta\circ\pi^{\gamma}_\zeta)(\bar\tau_k,\bar x)=(g\circ\sigma,h)(\bar\tau_k,\bar x)\to(0,\infty)$ (or $(\infty,0)$) and $\bar\tau_k\bar x\to\bar z$ as $k\to\infty$.
This is a contradiction to Lemma \ref{lem:sub} and verifies (ii).

Now suppose that $f_\zeta(\tau,x_\zeta)$ is transient and $f_{\gamma}(\tau,x_{\gamma})$ is recurrent, and choose $x'\in X_\gamma$ so that $(x',0)$ is $\widetilde\tau_{f_{\gamma}}$-recurrent.
Since $(x',0)$ is cannot be $\widetilde\tau_{f_\zeta\circ\pi_\zeta^{\gamma}}$-recurrent, there exist a neighbourhood $V\subset X_\gamma\times\R$ of $(x',0)$ and a replete semigroup $P\subset T$ with $\widetilde\tau_{f_\zeta\circ\pi_\zeta^{\gamma}}(x',0)\notin V$ for every $\tau\in P$.
By induction exists a sequence $\{\tau_k\}_{k\geq 1}\subset P$ with $\widetilde{(\tau_k)}_{f_{\gamma}}(x',0)\to(x',0)$ as $k\to\infty$, and by Lemma \ref{lem:trans} there exist a point $\bar x\in X_\gamma$ and sequence $\{\bar\tau_k\}\subset T$ so that $(f_\gamma,f_\zeta\circ\pi^{\gamma}_\zeta)(\bar\tau_k,\bar x)=(g\circ\sigma+h,g\circ\sigma)(\bar\tau_k,\bar x)\to(0,\infty)$ and $\bar\tau_k\bar x\to\bar x$.
This contradiction to Lemma \ref{lem:sub} verifies the statement (iii).
\end{proof}

\begin{proposition}\label{prop:max}
If the real-valued cocycle $f(\tau,x)$ is topologically recurrent apart from a coboundary, then there exists a maximal ordinal $1\leq\alpha\leq\eta$ so that the skew product extension $\widetilde\tau_{f_\alpha}$ is point transitive on $X_\alpha\times\R$.
The cocycle $(f-f_\alpha\circ\pi_\alpha) (\tau,x)$ is relatively trivial with respect to $(f_\alpha\circ\pi_\alpha)(\tau,x)$.
\end{proposition}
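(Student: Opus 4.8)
The plan is to separate an easy case from the main case. If $\widetilde\tau_f$ is point transitive, take $\alpha=\eta$: it is vacuously the maximal ordinal, and $f-f_\eta\circ\pi_\eta\equiv0$ is trivially relatively trivial with respect to anything. So assume henceforth that $\widetilde\tau_f$ is \emph{not} point transitive, set $S=\{\xi:1\le\xi\le\eta,\ \widetilde\tau_{f_\xi}\text{ is point transitive on }X_\xi\times\R\}$, and recall $f=f_\eta$. The whole argument runs by transfinite induction along the normal \emph{I}-system, invoking the contrapositive forms of Lemma \ref{lem:c_t} and Lemma \ref{lem:lim}, together with Fact \ref{fact:GH} and Proposition \ref{prop:isom}.

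\emph{First I would show $S\ne\emptyset$.} Suppose not, so $\widetilde\tau_{f_\xi}$ is point transitive for no $\xi\in[1,\eta]$. If $f_1$ were transient, then Lemma \ref{lem:c_t} at successors and Lemma \ref{lem:lim}(iii) (with its parameter $\alpha=1$) at limit ordinals would propagate transience up the entire \emph{I}-system, forcing $f=f_\eta$ to be transient, contradicting its recurrence. Hence $f_1$ is recurrent, and since $(X_1,T)$ is isometric and $\widetilde\tau_{f_1}$ is not point transitive, Proposition \ref{prop:isom} gives that $f_1$ is a topological coboundary. Now I would prove by transfinite induction that every $f_\xi$, $1\le\xi\le\eta$, is a topological coboundary. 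At a successor $\xi=\gamma+1$: any $\{(\tau_k,x_k)\}\subset T\times X_{\gamma+1}$ with $d_{\gamma+1}(x_k,\tau_k x_k)\to0$ also satisfies $(f_\gamma\circ\pi_\gamma^{\gamma+1})(\tau_k,x_k)\to0$ (push the near-return down to $X_\gamma$ and use uniform continuity of a transfer function for the coboundary $f_\gamma$), so the contrapositive of Lemma \ref{lem:c_t}, together with the non-point-transitivity of $\widetilde\tau_{f_{\gamma+1}}$, yields $f_{\gamma+1}(\tau_k,x_k)\to0$; as this holds for every such sequence, $f_{\gamma+1}$ is a coboundary by Fact \ref{fact:GH}. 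At a limit ordinal $\xi=\gamma$ the same reasoning works via the contrapositive of Lemma \ref{lem:lim}(ii) (with parameter $\alpha=1$), using that every $f_\zeta$ with $\zeta<\gamma$ is already a coboundary. Thus $f=f_\eta$ is a coboundary, contradicting the hypothesis, so $S\ne\emptyset$.

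\emph{Next, $S$ has a maximum.} Put $\alpha=\sup S\le\eta$. If $\alpha\notin S$, then $\alpha$ is a limit ordinal with $1<\alpha$ and $S\cap[1,\alpha)$ cofinal in $\alpha$, so Lemma \ref{lem:lim}(i) forces $\widetilde\tau_{f_\alpha}$ to be point transitive, i.e. $\alpha\in S$ --- a contradiction. Hence $\alpha=\max S$ is the desired ordinal with $1\le\alpha\le\eta$ and $\widetilde\tau_{f_\alpha}$ point transitive, and every ordinal in $(\alpha,\eta]$ has a non-point-transitive skew product.

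\emph{Finally, the relative triviality.} Let $\{(\tau_k,x_k)\}\subset T\times X$ with $d(x_k,\tau_k x_k)\to0$ and $(f_\alpha\circ\pi_\alpha)(\tau_k,x_k)\to0$, and write $y_k^\xi=\pi_\xi(x_k)$, so $d_\xi(y_k^\xi,\tau_k y_k^\xi)\to0$ for each $\xi$. I would prove by transfinite induction on $\xi\in[\alpha,\eta]$ that $f_\xi(\tau_k,y_k^\xi)\to0$, the base case $\xi=\alpha$ being the hypothesis. For $\xi=\gamma+1$ with $\alpha\le\gamma<\eta$, maximality of $\alpha$ makes $\widetilde\tau_{f_{\gamma+1}}$ non-point-transitive, and the induction hypothesis gives $(f_\gamma\circ\pi_\gamma^{\gamma+1})(\tau_k,y_k^{\gamma+1})\to0$, so the contrapositive of Lemma \ref{lem:c_t} yields $f_{\gamma+1}(\tau_k,y_k^{\gamma+1})\to0$. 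For a limit ordinal $\xi=\gamma$ with $\alpha<\gamma\le\eta$, again $\widetilde\tau_{f_\gamma}$ is non-point-transitive and the induction hypothesis gives $(f_\zeta\circ\pi_\zeta^\gamma)(\tau_k,y_k^\gamma)\to0$ for all $\zeta\in[\alpha,\gamma)$, so the contrapositive of Lemma \ref{lem:lim}(ii) (with its parameter taken to be our $\alpha$) yields $f_\gamma(\tau_k,y_k^\gamma)\to0$. Taking $\xi=\eta$ gives $f(\tau_k,x_k)\to0$, which is precisely relative triviality of $(f-f_\alpha\circ\pi_\alpha)$ with respect to $(f_\alpha\circ\pi_\alpha)$. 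The step I expect to be the main obstacle is the non-emptiness of $S$: one must carefully interleave the upward propagation of transience with the upward propagation of ``being a coboundary'', making sure that at each successor and limit stage the lower-level transfer functions really do force the auxiliary convergences $(f_\zeta\circ\pi_\zeta^\gamma)(\tau_k,x_k)\to0$ needed to apply Lemmas \ref{lem:c_t} and \ref{lem:lim} before invoking Fact \ref{fact:GH}.
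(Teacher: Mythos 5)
Your proof is correct and uses the same essential machinery as the paper: transfinite induction through the normal \emph{I}-system via Lemmas~\ref{lem:c_t} and \ref{lem:lim}, Proposition~\ref{prop:isom}, Fact~\ref{fact:GH}, and Lemma~\ref{lem:lim}(i) at the limit to secure the maximum. The only organizational difference is that you establish the non-emptiness of the set of point-transitive stages by contradiction, propagating either transience or the coboundary property upward depending on whether $f_1$ is transient or recurrent, whereas the paper argues directly by locating the minimal ordinal $\beta$ at which the relevant property (recurrence, or non-coboundary) first occurs and showing $\widetilde\tau_{f_\beta}$ must be point transitive; the maximality and relative-triviality steps are identical.
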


\begin{proof}
Let us first suppose that the cocycle $f_\xi(\tau,x_\xi)$ is recurrent for every ordinal $1\leq\xi<\eta$, and let $\mc O=\{1\leq\xi\leq\eta:{f_\xi}(\tau,x_\xi)\enspace\textup{is \emph{not} a coboundary}\}$.
This set is non-empty since $f_\eta(\tau,x)$ is not a coboundary, and let $\beta$ be its minimal element.
If $\beta=1$, then by Proposition \ref{prop:isom} the recurrent skew product $\widetilde{\tau}_{f_1}$ of the isometric flow $(X_1,T)$ is point transitive.
If $\beta>1$, then Fact \ref{fact:GH} provides a sequence $\{(\tau_k,x_k)\}_{k\geq 1}\subset T\times X_\beta$ with $d_\beta(x_k,\tau_k x_k)\to 0$ and $f_\beta(\tau_k, x_k)\to\infty$.
For all $1\leq\zeta<\beta$ holds $(f_\beta\circ\pi_\zeta^\beta)(\tau_k, x_k)\to 0$, and by the Lemmas \ref{lem:c_t} and \ref{lem:lim} (ii) $\widetilde{\tau}_{f_\beta}$ is point transitive.

If $f_\xi(\tau,x_\xi)$ is transient for an ordinal $1\leq\xi<\eta$, then let $\beta$ be the minimal element of the set $\mc O=\{\xi<\zeta\leq\eta:f_\zeta(\tau,x_\zeta)\enspace\textup{is topologically recurrent}\}$.
This set is non-empty since $f_\eta(\tau,x_\eta)$ is topologically recurrent, and it follows from the Lemmas \ref{lem:c_t} and \ref{lem:lim} (iii) that $\widetilde\tau_{f_\beta}$ is even point transitive.

Now let $\mc O=\{1\leq\xi\leq\eta:\widetilde\tau_{f_\zeta}\enspace\textup{is \emph{not} point transitive for all}\enspace\xi\leq\zeta\leq\eta\}$.
If $\mc O$ is empty, then $\widetilde\tau_{f_\eta}$ is point transitive and $\alpha=\eta$.
Otherwise, the set $\mc O$ has a minimal element $\gamma>1$ since $\widetilde\tau_{f_\beta}$ is point transitive for some $1\leq\beta\leq\eta$.
Since $\gamma$ cannot be a limit ordinal by Lemma \ref{lem:lim} (i), there exists a maximal ordinal $\alpha\geq 1$ with point transitive $\widetilde\tau_{f_\alpha}$.
If $\{(\tau_k,x_k)\}_{k\geq 1}\subset T\times X$ is a sequence with $d (x_k,\tau_k x_k)\to 0$ and $(f_\alpha\circ\pi_\alpha)(\tau_k,x_k)\to 0$, then transfinite induction using the maximality of $\alpha$ and Lemmas \ref{lem:c_t}, \ref{lem:lim} (ii) verifies that $(f_\xi\circ\pi_\xi)(\tau_k,x_k)\to 0$ for every $\alpha\leq\xi\leq\eta$.
\end{proof}

After the flow $(X_\alpha,T)$ with a point transitive skew product extension $\widetilde\tau_{f_\alpha}$ has been identified, we shall study the extension from $(X_\alpha,T)$ to $(X,T)$.
There might be infinitely many isometric extensions in between, and therefore this extension is in general a distal extension.
Since our construction will use the regulariser of this extension, it is necessary to leave the category of compact metric flows for the category of compact Hausdorff flows during the following construction (cf. Remark \ref{rem:d_n_m}).
However, the flow which will be constructed by means of the regulariser will be metric as a factor of the compact metric flow $(X,T)$.

\begin{proposition}\label{prop:flow}
There exists a factor $(Y,T)=(X_\alpha\times M,T)=\pi_Y(X,T)$ which is a Rokhlin extension of $(X_\alpha,T)=\rho_\alpha(Y,T)$ by a distal minimal compact metric $\R$-flow $(M,\{\phi^t:t\in\R\})$ and the cocycle $f_\alpha(\tau,x_\alpha)$ so that for every $x\in X$ holds
\begin{equation}\label{eq:p_X}
\pi_Y^{-1}(\pi_Y(x))\times\{0\}=\mc D_{T, f_\alpha\circ\pi_\alpha}(x,0)\cap(\pi_\alpha^{-1}(\pi_\alpha(x))\times\{0\}) .
\end{equation}
The $\R$-flow $\{\psi^t:t\in\R\}\subset\textup{Aut}(Y,T)$ defined by $\psi^t(x_\alpha,m)=(x_\alpha,\phi^t(m))$ for $(x_\alpha,m)\in Y=X_\alpha\times M$ fulfils for every $y\in Y$ and every $t\in\R$ that
\begin{equation}\label{eq:o_flow}
\bar{\mc O}_{T, f_\alpha\circ\rho_\alpha}(y,0)\cap(\rho_\alpha^{-1}(\rho_\alpha(y))\times\{t\})\subset\{(\psi^t(y),t)\} ,
\end{equation}
with coincidence of these sets if $(\rho_\alpha(y),0)\in X_\alpha\times\R$ is transitive for $\widetilde\tau_{f_\alpha}$.
\end{proposition}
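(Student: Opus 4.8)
The plan is to \emph{linearise} the extension $\pi_\alpha$ through its regulariser and then, using the point transitivity of $\widetilde\tau_{f_\alpha}$, to carve out a $T$-equivariant cross section sitting over the skew product $X_\alpha\times\R$. First I would dispose of the case $\alpha=\eta$ (take $M$ a point and $Y=X$), so assume $\alpha<\eta$. By Fact~\ref{fact:reg_ex} I would fix a distal minimal compact Hausdorff flow $(\tilde X,T)$, a homomorphism $\tilde\sigma\colon\tilde X\to X_\alpha$, a compact Hausdorff group $G\subseteq\textup{Aut}(\tilde X,T)$ acting freely and strictly transitively on the $\tilde\sigma$-fibres, and a closed subgroup $H\le G$ with $X=\tilde X/H$, $\pi\colon\tilde X\to X$ the quotient and $\tilde\sigma=\pi_\alpha\circ\pi$. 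Lift the cocycle: $\hat f:=f_\alpha\circ\tilde\sigma$. Since $\tilde\sigma\circ g=\tilde\sigma$ for $g\in G$, one has $\hat f(\tau,g\tilde x)=\hat f(\tau,\tilde x)$, so the skew product $\widetilde\tau_{\hat f}$ on $\tilde X\times\R$ commutes with the right translations $R_s$ and with the $G$-action $g\cdot(\tilde x,s)=(g\tilde x,s)$, on which $G$ then acts by automorphisms. I would also record that $\widetilde\tau_{\hat f}$ and $\widetilde\tau_{f_\alpha}$ are distal flows on their non-compact phase spaces (a proximal pair must agree in the distal base and hence, the $\R$-difference being constant, coincide), and that $f_\alpha$ is topologically recurrent, because point transitivity of $\widetilde\tau_{f_\alpha}$ forces its regional recurrence and then Remarks~\ref{rems:rec} and the minimality of $X_\alpha$ apply.

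Next I would fix $\bar x_\alpha\in X_\alpha$ with $\bar{\mc O}_{T,f_\alpha}(\bar x_\alpha,0)=X_\alpha\times\R$, together with a lift $\tilde{\bar x}$ for which $\bar x:=\pi(\tilde{\bar x})$ lies in the dense $G_\delta$ set of Fact~\ref{fact:o_p}, and set $\widetilde N:=\bar{\mc O}_{T,\hat f}(\tilde{\bar x},0)\subseteq\tilde X\times\R$. By transitivity every $(x_\alpha,s)$ is a simultaneous limit of $\tau_k\bar x_\alpha$ and $f_\alpha(\tau_k,\bar x_\alpha)$; lifting $\tau_k\tilde{\bar x}$ inside the compact fibre over $\tau_k\bar x_\alpha$ and passing to a limit shows that $\tilde\sigma\times\textup{id}\colon\widetilde N\to X_\alpha\times\R$ is \emph{onto}. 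A diagonalisation using the $G$-invariance of $\hat f$ then shows that $G_0:=\{g\in G:(g\tilde{\bar x},0)\in\widetilde N\}$ is a closed subsemigroup of $G$, hence a closed subgroup, that $g\widetilde N=\widetilde N$ precisely for $g\in G_0$, and that $G\cdot\widetilde N$ is closed, dense and $T$- and $G$-invariant, hence all of $\tilde X\times\R$; so $G/G_0$ parametrises the family $\{g\widetilde N:g\in G\}$, which covers $\tilde X\times\R$.

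The hard part will be to prove that this covering is actually a \emph{partition}, equivalently that $\tilde\sigma\times\textup{id}\colon\widetilde N\to X_\alpha\times\R$ is \emph{injective}, so that $\widetilde N$ with $\widetilde\tau_{\hat f}$ becomes a $T$-equivariant copy of $(X_\alpha\times\R,\widetilde\tau_{f_\alpha})$. If it failed there would be $(\tilde z,s)\in\widetilde N$ and $g\neq\mathbf 1_G$ with $(g\tilde z,s)\in\widetilde N$; a diagonalisation in the spirit of Lemma~\ref{lem:trans} would give $(g\tilde z,0)\in\mc D_{T,\hat f}(\tilde z,0)$, and iterating with the automorphisms $g^n$ and the composition rule for prolongations (Remark~\ref{rem:o_p}), together with the fact that $\overline{\langle g\rangle}$ is a non-trivial compact subgroup of $G$, should force either a replete semigroup on which $\hat f$ stays away from a neighbourhood of $0$ — contradicting recurrence as in the proof of Lemma~\ref{lem:tr_coc} — or a sequence witnessing, via Lemmas~\ref{lem:c_t} and~\ref{lem:lim} applied to the first isometric step above $X_\alpha$, a point transitive $\widetilde\tau_{f_\xi}$ with $\xi>\alpha$, contradicting the maximality of $\alpha$ in Proposition~\ref{prop:max}. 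Marrying the recurrence, the distality and this maximality is the step I expect to be the most delicate.

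Granting the partition, $\widetilde N$ is the graph of a continuous cross section; over $(\bar x_\alpha,s)$ it is the single point $(g_s\tilde{\bar x},s)$ with $g_0=\mathbf 1_G$, and one more diagonalisation with the $G$-invariance of $\hat f$ shows $s\mapsto g_s$ is a continuous homomorphism $\R\to G$ with abelian image. I would let $H''$ be the closed subgroup generated by $H$ and by $G_0$ (shown to be the prolongation group $\{g:(g\tilde x,0)\in\mc D_{T,\hat f}(\tilde x,0)\}$ inside any $\tilde\sigma$-fibre, independent of $\tilde x$), put $M:=G/H''$ with the $\R$-flow $\phi^t(gH''):=g_tgH''$, and $Y:=\tilde X/H''$; since $H\le H''\le G$ this is a factor $\pi_Y\colon X\to Y$ over $X_\alpha$ with fibre $M$, and it is metric as a factor of the metric flow $X$. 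The cross section provides a $T$-equivariant identification of $Y$ with $X_\alpha\times M$ carrying the Rokhlin action $\tau_{\phi,f_\alpha}$; $M$ is a homogeneous space of the compact group $G$ with $\R$ acting by translations, hence equicontinuous and in particular distal, and it is minimal because the minimality of $\tilde X$ together with the partition forces $\overline{\{g_t:t\in\R\}}\cdot H''=G$. Equation~\eqref{eq:p_X} then holds essentially by construction: the prolongation-at-height-zero relation inside a $\tilde\sigma$-fibre is exactly the $G_0$-coset relation (generically by Fact~\ref{fact:o_p} and the orbit closures, then in general by upper semicontinuity of prolongations and $T$-invariance), and pushing it through $\pi$ and dividing by $H$ yields the $\pi_Y$-fibre relation. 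Finally \eqref{eq:o_flow} is immediate from the Rokhlin form of the action — along the $\widetilde\tau_{f_\alpha\circ\rho_\alpha}$-orbit of $(y,0)$ the $M$-coordinate at height $t$ is $\phi^t$ of that of $y$, and this persists in the closure by continuity of $(t,m)\mapsto\phi^t(m)$ — while for transitive $(\rho_\alpha(y),0)$ one picks $\tau_k$ with $\tau_k\rho_\alpha(y)\to\rho_\alpha(y)$, $f_\alpha(\tau_k,\rho_\alpha(y))\to t$, so that $\widetilde{(\tau_k)}_{f_\alpha\circ\rho_\alpha}(y,0)\to(\psi^t(y),t)$ and the inclusion is an equality.
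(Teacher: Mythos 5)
Your plan follows the same broad architecture as the paper's proof (regulariser, a group $G_0$ capturing the prolongation-at-height-zero relation inside fibres, quotient by the group generated by $G_0$ and $H$, identification of the result as a Rokhlin extension), but it rests on a false premise: you take the group $G\subset\textup{Aut}(\tilde X,T)$ in Fact~\ref{fact:reg_ex} to be \emph{compact}. For the general distal extension $\pi_\alpha\colon X\to X_\alpha$ with $\alpha<\eta$ the Fact only gives a Hausdorff topological group; $G$ is compact metric only when the extension is isometric (cf.\ Remark~\ref{rem:d_n_m}), and the paper explicitly records that $G_0$ ``is not necessarily compact.'' Several steps in your argument collapse without compactness: a closed subsemigroup of $G$ need not be a subgroup, so your $G_0$ is not automatically a group; $\overline{\langle g\rangle}$ need not be a compact subgroup, so the contradiction you sketch in the ``hard part'' does not get off the ground; and $M$ is not a homogeneous space of a compact group, so its equicontinuity (and even its distality) does not come for free. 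The paper sidesteps all of this by working with \emph{prolongations} in the metric space $X\times\R$ rather than orbit closures in the non-metric $\tilde X\times\R$ (where Fact~\ref{fact:o_p} is not available), and by proving $G_{-t}=(G_t)^{-1}$ and $G_tG_{t'}=G_{t+t'}$ directly via diagonalisations that do not invoke compactness.

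A second, independent gap: you equate ``the family $\{g\widetilde N:g\in G\}$ is a partition'' with ``$\tilde\sigma\times\textup{id}$ restricted to $\widetilde N$ is injective.'' These are not equivalent. Injectivity of that projection would give exactly one point of $\widetilde N$ over $(\bar x_\alpha,0)$, hence, by freeness of the $G$-action, $G_0=\{\mathbf 1_G\}$; but then $H''=H$, $Y=X$, and equality \eqref{eq:p_X} would assert that the prolongation at height zero of every $(x,0)$ inside its $\pi_\alpha$-fibre is the singleton $\{x\}$, which is generally false (it is precisely the set whose size you are trying to control). What is actually needed is that the $G_0$-orbit relation, pushed down to $X$, is a \emph{closed} $T$-invariant equivalence relation; the paper obtains this from the identification of that relation with the prolongation condition, whose graph is closed by upper semicontinuity of prolongations. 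So the ``hard part'' you flag as delicate is indeed where the proposal fails, and the sketch you give for it both misstates what must be shown and leans on the unavailable compactness of $G$.
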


\begin{proof}
We shall construct a factor $(Y,T)$ of $(X,T)$ and a flow $\{\varphi^t:t\in\R\}\subset\textup{Aut}(Y,T)$, and then we shall represent $(Y,T)$ as a Rokhlin extension of $(X_\alpha,T)$.
Let $(\tilde X,T)$ be a distal minimal compact Hausdorff flow with $(X,T)=\pi(\tilde X,T)$ and a Hausdorff topological group $G\subset\textup{Aut}(\tilde X,T)$ acting freely on the fibres of $\pi_\alpha\circ\pi$ so that $(X,T)$ is the $H$-orbit space of a subgroup $H\subset G$ (cf. Fact \ref{fact:reg_ex}).
For an arbitrary point $\tilde z\in \tilde X$ and $t\in\R$ we define a closed subset of $G$ by
\begin{equation}\label{eq:G}
G_{\tilde z,t}=\{g\in G: (\pi(g(\tilde z)),t)\in\mc D_{T,f_\alpha\circ\pi_\alpha}(\pi(\tilde z),0)\} .
\end{equation}
The mapping $\pi$ is open as a homomorphism of distal minimal compact flows, and hence for every $g\in G_{\tilde z,t}$ there exist nets $\{\tilde z_i\}_{i\in I}\subset \tilde X$ and $\{\tau_i\}_{i\in I}\subset T$ with $\tilde z_i\to \tilde z$, $\tau_i\pi(\tilde z_i)\to\pi(g(\tilde z))$, and $f_\alpha(\tau_i,\pi_\alpha\circ\pi(\tilde z_i))\to t$.
Since the cocycle $(f_\alpha\circ\pi_\alpha)(\tau,x_\alpha)$ is constant on the fibres of $\pi_\alpha$ and $T$ is Abelian, it follows for every fixed $\tau\in T$ that
\begin{equation*}
\tau_i\pi(\tau \tilde z_i)=\tau_i\tau\pi(\tilde z_i)=\tau\tau_i\pi(\tilde z_i)\to\tau\pi(g(\tilde z))=\pi(\tau g(\tilde z))=\pi(g(\tau \tilde z))
\end{equation*}
and by the cocycle identity
\begin{eqnarray*}
f_\alpha(\tau_i,\pi_\alpha\circ\pi(\tau \tilde z_i)) & = & f_\alpha(\tau_i,\pi_\alpha\circ\pi(\tilde z_i))-f_\alpha(\tau,\pi_\alpha\circ\pi(\tilde z_i))\\
& & +f_\alpha(\tau,\pi_\alpha\circ\pi(\tau_i\tilde z_i))\to t .
\end{eqnarray*}
By the density of the $T$-orbit of $\tilde z$ and a diagonalisation of nets there exist for every $\tilde x\in \tilde X$ nets $\{\tilde x_i\}_{i\in I}\subset \tilde X$ and $\{\tau'_i\}_{i\in I}\subset T$ with $\tilde x_i\to \tilde x$, $\tau'_i\pi(\tilde x_i)\to\pi(g(\tilde x))$, and $f_\alpha(\tau_i,\pi_\alpha\circ\pi(\tilde x_i))\to t$.
Therefore
\begin{equation*}
(\pi(g(\tilde x)),t)\in\mc D_{T,f_\alpha\circ\pi_\alpha}(\pi(\tilde x),0)
\end{equation*}
so that $g\in G_{\tilde x,t}=G_{\tilde z,t}=G_t$.
By symmetry follows now that $G_{-t}=(G_t)^{-1}$.

Then we fix a point $x'\in X$ with $\bar{\mc O}_{T,f_\alpha}(\pi_\alpha(x'),0)=X_\alpha\times\R$ and $\mc D_{T,f_\alpha\circ\pi_\alpha}(x',0)=\bar{\mc O}_{T,f_\alpha\circ\pi_\alpha}(x',0)$ (cf. Fact \ref{fact:o_p}).
The set $G_t$ is non-empty for every $t\in\R$, since $\bar{\mc O}_{T, f_\alpha}(\pi_\alpha(x'),0)=X_\alpha\times\R$ and the compactness of $X$ ensure that
\begin{equation*}
\bar{\mc O}_{T,f_\alpha\circ\pi_\alpha}(x',0)\cap\pi_\alpha^{-1}(\pi_\alpha(x'))\times\{t\}\neq\emptyset .
\end{equation*}
For arbitrary $t,t'\in\R$ and $g\in G_t$, $g'\in G_{t'}$, we select $\tilde x, \tilde z\in \tilde X$ so that $\pi(\tilde x)=x'$ and $\tilde x=g'(\tilde z)$.
Then we have
\begin{equation*}
(x',t')=(\pi(g'(\tilde z)),t')\in\mc D_{T,f_\alpha\circ\pi_\alpha}(\pi(\tilde z),0) ,
\end{equation*}
and for $\tilde y=g(\tilde x)=gg'(\tilde z)$ it holds that $(\pi(\tilde y),t)\in\bar{\mc O}_{T,f_\alpha\circ\pi_\alpha}(x',0)=\mc D_{T,f_\alpha\circ\pi_\alpha}(x',0)$.
By Remark \ref{rem:o_p} follows $(\pi(\tilde y),t+t')\in\mc D_{T,f_\alpha\circ\pi_\alpha}(\pi(\tilde z),0)$ so that $gg'\in G_{t+t'}$.
Hence $G_t G_{t'}\subset G_{t+t'}$ holds for all $t,t'\in\R$, and from $G_{-t}=(G_{t})^{-1}$ follows $(G_{t})^{-1} G_{t+t'}=G_{-t} G_{t+t'}\subset G_{t'}$ so that $G_{t} G_{t'}= G_{t+t'}$.
Thus the Hausdorff topological group
\begin{equation*}
\tilde G=\cup_{t\in\R}G_t
\end{equation*}
has the closed set $G_0\supset H$ as a normal subgroup so that $G_t$ is a $G_0$-coset in $\tilde G$ for every $t\in\R$.
Moreover, the mapping $t\mapsto G_t$ is a group homomorphism from $\R$ into $\tilde G/G_0$.
The group $G_0$ is not necessarily compact, however its orbit space on $\tilde X$ defines a partition into sets invariant under $H\subset G_0$.
Hence this is also a partition of $X$, and the equivalence relation $R_Y$ of this partition of $X$ is $T$-invariant since $G_0\subset\textup{Aut}(\tilde X,T)$.
Moreover, $R_Y$ is closed in $X^2$, since definition (\ref{eq:G}) implies that $(x,x')\in R_Y$ if and only if $(x',0)\in\mc D_{T, f_\alpha\circ\pi_\alpha}(x,0)\cap(\pi_\alpha^{-1}(\pi_\alpha(x))\times\{0\})$.
The factor $(Y,T)=\pi_Y(X,T)$ defined by the $T$-invariant closed equivalence relation $R_Y$ is an extension of $(X_\alpha,T)=\rho_\alpha(Y,T)$, and equality (\ref{eq:p_X}) follows.
The $\R$-action $\{\varphi^t:t\in\R\}\subset\textup{Aut}(Y,T)$ is well defined for every $y\in Y$ and $t\in\R$ by
\begin{equation*}
\varphi^t(y)=G_t ((\pi_Y\circ\pi)^{-1}(y))=G_t (\{\tilde x\in \tilde X:G_0(\tilde x)=y\}) .
\end{equation*}
Let $\{(t_k,y_k)\}_{k\geq 1}\subset\R\times Y$ be a sequence with $(t_k,y_k)\to (t,y)$, then $\varphi^{t_k}(y_k)=G_0 g_k(\tilde x_k)$ for a sequence $\{\tilde x_k\}_{k\geq 1}\subset\tilde X$ with $\pi_Y\circ\pi(\tilde x_k)=y_k$ and $g_k\in G_{t_k}$.
We can assume that $\tilde x_k\to\tilde x$ and $g_k(\tilde x_k)\to\tilde z$ so that $(\pi(\tilde z),t)\in\mc D_{T,f_\alpha\circ\pi_\alpha}(\pi(\tilde x),0)$ and $\tilde z=g_t(\tilde x)$ for some $g_t\in G_t$.
From $\pi_Y\circ\pi(\tilde x)=y$ and $\varphi^{t_k}(y_k)=\pi_Y\circ\pi(g_k(\tilde x_k))\to\pi_Y\circ\pi(\tilde z)=\varphi^t(y)$ follows the continuity of the action $\{\varphi^t:t\in\R\}$ on $Y$.

We turn to the inclusion (\ref{eq:o_flow}).
Suppose that $(y_i,t)\in\bar{\mc O}_{T, f_\alpha\circ\rho_\alpha}(y,0)\cap\rho_\alpha^{-1}(x_\alpha)\times\{t\}$ for some $x_\alpha\in X_\alpha$ and $i\in\{1,2\}$, and select $x\in\pi_Y^{-1}(y)$.
By the compactness of $X$ there exist points $x_i\in\pi_Y^{-1}(y_i)\subset\pi_\alpha^{-1}(x_\alpha)$ so that $(x_i,t)\in\bar{\mc O}_{T, f_\alpha\circ\pi_\alpha}(x,0)$, and therefore $(x_2,0)\in\mc D_{T,f_\alpha\circ\pi_\alpha}(x_1,0)$.
The equality (\ref{eq:p_X}) implies that $y_1=\pi_Y(x_1)=\pi_Y(x_2)=y_2$, and thus for every $y\in Y$ and $t\in\R$ holds
\begin{equation}\label{eq:card_1}
\textup{card}\{\bar{\mc O}_{T, f_\alpha\circ\rho_\alpha}(y,0)\cap\rho_\alpha^{-1}(\rho_\alpha(y))\times\{t\}\}\leq 1 .
\end{equation}
Moreover, for $x_\alpha=\rho_\alpha(y)$ follows $x_1=\pi(g_t(\tilde x))$ with $g_t\in G_t$ and $\tilde x\in\pi^{-1}(x)\subset\tilde X$.
Hence $y_1=\pi_Y(x_1)=\varphi^t(y)$ and inclusion (\ref{eq:o_flow}) is verified.
For $\widetilde\tau_{f_\alpha}$-transitive $(\rho_\alpha(y),0)$ the cardinality in (\ref{eq:card_1}) is equal to $1$ for every $y\in Y$ and $t\in\R$, and for $y'\in\rho_\alpha^{-1}(\rho_\alpha(y))$ and $x_\alpha\in X_\alpha$ holds $\bar{\mc O}_{T,f_\alpha\circ\rho_\alpha}(y',0)\cap\rho_\alpha^{-1}(x_\alpha)\times\{0\}=\{(y_1,0)\}$.
We fix a point $\bar x\in X_\alpha$ with $\widetilde\tau_{f_\alpha}$-transitive $(\bar x,0)$.
If $(y_2,0)\in\mc D_{T,f_\alpha\circ\rho_\alpha}(y',0)\cap\rho_\alpha^{-1}(x_\alpha)\times\{0\}$, then Remark \ref{rem:o_p} implies that $(y_2,0)\in\mc D_{T,f_\alpha\circ\rho_\alpha}(y_1,0)$, and as above follows $y_1=y_2$.
Hence
\begin{equation}\label{eq:homeo_Y}
\bar{\mc O}_{T,f_\alpha\circ\rho_\alpha}(y',0)\cap\rho_\alpha^{-1}(x_\alpha)\times\{0\}=\mc D_{T,f_\alpha\circ\rho_\alpha}(y',0)\cap\rho_\alpha^{-1}(x_\alpha)\times\{0\}
\end{equation}
holds for every $y'\in\rho_\alpha^{-1}(\bar x)$ and $x_\alpha\in X_\alpha$.
For distinct $y',y''\in\rho_\alpha^{-1}(\bar x)$ follows
\begin{equation*}
\bar{\mc O}_{T,f_\alpha\circ\rho_\alpha}(y',0)\cap\bar{\mc O}_{T,f_\alpha\circ\rho_\alpha}(y'',0)\cap\rho_\alpha^{-1}(x_\alpha)\times\{0\}=\emptyset .
\end{equation*}
Indeed, given a point $\bar y$ in this intersection, for every sequence $\{\tau_k\}_{k\geq 1}\subset T$ with $\tau_k\bar x\to\rho_\alpha(\bar y)$ and $f_\alpha(\tau_k,\bar x)\to 0$ follows by equality (\ref{eq:card_1}) that $d_Y(\tau_k y',\tau_k y'')\to 0$, in contradiction to the distality of $(Y,T)$.
Hence the mapping $\iota:X_\alpha\times\rho_\alpha^{-1}(\bar x)\longrightarrow Y$
\begin{equation*}
(x_\alpha,y')\mapsto\rho_\alpha^{-1}(x_\alpha)\cap\{y\in Y:(y,0)\in\bar{\mc O}_{T,f_\alpha\circ\rho_\alpha}(y',0)\}
\end{equation*}
is well-defined, one-to-one, and by equality (\ref{eq:homeo_Y}) also continuous.
For a dense set of points $\bar y\in Y$ holds the $\widetilde\tau_{f_\alpha}$-transitivity of $(\rho_\alpha(\bar y),0)$, since $\rho_\alpha$ is open.
We can conclude for every $y\in Y$ that $\mc D_{T,f_\alpha\circ\rho_\alpha}(y,0)\cap\rho_\alpha^{-1}(\bar x)\times\{0\}\neq\emptyset$, and thus $\mc D_{T,f_\alpha\circ\rho_\alpha}(y',0)\cap\{y\}\times\{0\}=\bar{\mc O}_{T,f_\alpha\circ\rho_\alpha}(y',0)\cap\{y\}\times\{0\}\neq\emptyset$ for some $y'\in\rho_\alpha^{-1}(\bar x)$.
Hence $\iota$ is onto and by compactness $Y$ and $X_\alpha\times\rho_\alpha^{-1}(\bar x)$ are homeomorphic.

Let $\{\phi^t:t\in\R\}$ be the restriction of $\{\varphi^t:t\in\R\}$ to the $\{\varphi^t:t\in\R\}$-invariant compact metric space $M=\rho_\alpha^{-1}(\bar x)$.
For every $y'\in M$ and $\tau\in T$ holds $\bar{\mc O}_{T, f_\alpha\circ\rho_\alpha}(y',0)\cap\rho_\alpha^{-1}(\bar x)\times\{-f_\alpha(\tau,\bar x) \}=\{(\phi^{-f_\alpha(\tau,\bar x)}(y'),-f_\alpha(\tau,\bar x))\}$ and $\widetilde{\tau}_{f_\alpha\circ\pi_Y}(\phi^{-f_\alpha(\tau,\bar x)}(y'),-f_\alpha(\tau,\bar x))\in\rho_\alpha^{-1}(\tau\bar x)\cap\{y\in Y:(y,0))\in\bar{\mc O}_{T,f_\alpha\circ\rho_\alpha}(y',0)\}$.
Therefore $\tau\phi^{-f_\alpha(\tau,\bar x)}(y')=\iota(\tau\bar x,y')$ and $\tau y=\iota(\tau_{\phi,f_\alpha}(\bar x,y))$ for every $y\in M$ and $\tau\in T$.
The minimality of $(Y,T)$ implies that $(X_\alpha\times M,T)$ and $(Y,T)$ are topologically isomorphic via $\iota$.
Moreover, for the mapping $\psi^t(x_\alpha,m)=(x_\alpha,\phi^t(m))$ with $\psi\in\textup{Aut}(X_\alpha\times M,T)$ and every $m\in M=\rho_\alpha^{-1}(\bar x)$ and $t\in\R$ holds $\varphi^t(\iota(\tau_{\phi,f_\alpha}(\bar x,m)))=\varphi^t(\tau m)=\tau\varphi^t(m)=\iota(\tau_{\phi,f_\alpha}(\bar x,\phi^t(m)))=\iota(\psi^t(\tau_{\phi,f_\alpha}(\bar x,m)))$.
By the minimality of $(X_\alpha\times M,T)$ follows $\psi^t=\iota^{-1}\circ\varphi^t\circ\iota$ for every $t\in\R$.
The flow $(M,\{\phi^t:t\in\R\})$ is minimal and distal, since a non-transitive point $m'\in M$ and a proximal pair $(m',m'')\in M^2$, respectively, give rise a non-transitive point $(x_\alpha,m')\in Y$ and a proximal pair $((x_\alpha,m'),(x_\alpha,m''))\in Y^2$, respectively.
\end{proof}

It should be mentioned that an ordinal $\xi\leq\eta$ with $(Y,T)=(X_\xi,T)$ does not necessarily exist.
Therefore we shall define a cocycle $f_Y:T\times Y\longrightarrow\R$ independently of the cocycles $f_\xi(\tau,x_\xi)$, and it will turn out that $(f_Y\circ\pi_Y)(\tau,x)$ can be chosen topologically cohomologous to $f$.

\begin{proposition}\label{prop:res}
There exists a topological cocycle $f_Y(\tau,y)$ of the flow $(Y,T)$ so that $(f_Y\circ\pi_Y)(\tau,x)$ is topologically cohomologous to $f(\tau,x)$ and $f_Y(\tau,y)$ is relatively trivial with respect to $(f_\alpha\circ\rho_\alpha)(\tau,y)$.
\end{proposition}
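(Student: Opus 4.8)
The plan is to obtain $f_Y$ by pushing the remainder cocycle $r:=f-f_\alpha\circ\pi_\alpha$ down to $Y$. By Proposition \ref{prop:max} this $r$ is relatively trivial with respect to $f_\alpha\circ\pi_\alpha$, and $\widetilde\tau_{f_\alpha}$ is point transitive on $X_\alpha\times\R$; since $f_\alpha\circ\pi_\alpha=(f_\alpha\circ\rho_\alpha)\circ\pi_Y$ it is enough to produce a cocycle $r_Y$ of $(Y,T)$ with $r_Y\circ\pi_Y$ topologically cohomologous to $r$ and then put $f_Y:=f_\alpha\circ\rho_\alpha+r_Y$, the relative triviality of $f_Y$ over $f_\alpha\circ\rho_\alpha$ being the same assertion as that of $r_Y$. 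The whole construction will be read off from the skew product $\widetilde\tau_F$ on $X\times\R\times\R$ of the $\R^2$-valued cocycle $F:=(f_\alpha\circ\pi_\alpha,r)$.

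First I would fix $\bar x\in X$ in the (residual) set of points for which $(\pi_\alpha(\bar x),0)$ is a transitive point of $\widetilde\tau_{f_\alpha}$ and, by the $\R^2$-version of Fact \ref{fact:o_p}, $\bar{\mc O}_{T,F}(\bar x,s,u)=\mc D_{T,F}(\bar x,s,u)$ for all $(s,u)\in\R^2$; put $\Gamma:=\bar{\mc O}_{T,F}(\bar x,0,0)$ and $\Delta:=\bar{\mc O}_{T,f_\alpha\circ\pi_\alpha}(\bar x,0)$. If $(x,s,t),(x,s,t')\in\Gamma$, then Remark \ref{rem:o_p} gives $(x,s,t')\in\mc D_{T,F}(x,s,t)$, hence $(x,0,t'-t)\in\mc D_{T,F}(x,0,0)$ after a right translation; unwinding this membership yields $x_k\to x$ and $\tau_k\in T$ with $\tau_k x_k\to x$, $f_\alpha(\tau_k,\pi_\alpha x_k)\to0$, $r(\tau_k,x_k)\to t'-t$, so $d(x_k,\tau_k x_k)\to0$ forces $t'-t=0$ by relative triviality. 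Thus $\Gamma$ is the graph of a function $\Phi$ over its projection to $X\times\R$, with $\Phi(\tau x,s+f_\alpha(\tau,\pi_\alpha x))=\Phi(x,s)+r(\tau,x)$ on that projection and $\Phi(\tau\bar x,f_\alpha(\tau,\pi_\alpha\bar x))=r(\tau,\bar x)$. A diagonalisation in the spirit of the proof of Fact \ref{fact:GH} (pass to indices $k_l<k_l'$ along which $\tau_{k_l}\bar x$ and $f_\alpha(\tau_{k_l},\pi_\alpha\bar x)$ barely move while $r(\tau_{k_l'},\bar x)-r(\tau_{k_l},\bar x)$ explodes, contradicting relative triviality of $r$) shows $r(\tau_k,\bar x)$ stays bounded whenever $\tau_k\bar x$ and $f_\alpha(\tau_k,\pi_\alpha\bar x)$ converge; this identifies the projection of $\Gamma$ with all of $\Delta$ and shows $\Phi$ is bounded on compact subsets of $\Delta$, so the continuous bijection $\Gamma\to\Delta$ is proper and hence a homeomorphism and $\Phi$ is continuous on the locally compact set $\Delta$.

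\textbf{The main obstacle} is the descent of $\Phi$ along $\pi_Y$: one must show $\Phi(x,s)=\Phi(x',s)$ whenever $(x,s),(x',s)\in\Delta$ and $\pi_Y(x)=\pi_Y(x')$, so that $\Phi$ induces a continuous $\Phi_Y$ on $\Delta_Y:=(\pi_Y\times\mathrm{id}_\R)(\Delta)=\bar{\mc O}_{T,f_\alpha\circ\rho_\alpha}(\bar y,0)$ with $\bar y:=\pi_Y(\bar x)$. Remark \ref{rem:o_p} again gives $(x',0,\Phi(x',s)-\Phi(x,s))\in\mc D_{T,F}(x,0,0)$, i.e.\ a sequence $x_k\to x$, $\tau_k\in T$ with $\tau_k x_k\to x'$, $f_\alpha(\tau_k,\pi_\alpha x_k)\to0$, $r(\tau_k,x_k)\to\Phi(x',s)-\Phi(x,s)$; since $x\neq x'$ is possible, relative triviality does not apply directly and one has to ``return'' $x'$ to $x$ \emph{inside the fibre} $\pi_Y^{-1}(\pi_Y(x))$. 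I would do this through the regulariser picture used in the proof of Proposition \ref{prop:flow}: by equality (\ref{eq:p_X}), $\pi_Y(x)=\pi_Y(x')$ means $x'=\pi(g(\tilde x))$ for some $g$ in the group $G_0$ of that proof, and the density-of-orbit and net-diagonalisation around (\ref{eq:G}) produce approximating sequences along which $f_\alpha$ returns to $0$ while $r$ stays bounded — the already established continuous $\Phi$ supplying the a priori bound. Composing these with the $\tau_k$ gives a sequence $(\sigma_m,z_m)$ with $d(z_m,\sigma_m z_m)\to0$, $f_\alpha(\sigma_m,\pi_\alpha z_m)\to0$ and $r(\sigma_m,z_m)\to\Phi(x',s)-\Phi(x,s)+c$; running the symmetric argument cancels the bounded correction $c$, and relative triviality of $r$ then forces $\Phi(x',s)=\Phi(x,s)$.

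Finally, on the dense $T$-orbit of $\bar y$ set $r_Y(\tau,\sigma\bar y):=r(\tau,\sigma\bar x)$. This is well defined: as $T$ is Abelian and $(Y,T)$ is minimal, $\sigma_1\bar y=\sigma_2\bar y$ forces $\sigma_2^{-1}\sigma_1$ to act as the identity on $Y$, and then the descent just proved, together with the cocycle relation satisfied by $\Phi$ on $\Delta$ (and the Rokhlin structure $Y=X_\alpha\times M$ with minimal $(M,\{\phi^t\})$ from Proposition \ref{prop:flow}), gives $r(\tau,\sigma_1\bar x)=r(\tau,\sigma_2\bar x)$; continuity of $\Phi_Y$ and the boundedness on compacta let $r_Y$ extend to a topological cocycle of $(Y,T)$. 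Put $f_Y:=f_\alpha\circ\rho_\alpha+r_Y$. Because $\rho_\alpha(\bar y)=\pi_\alpha(\bar x)$ one has $(f_Y\circ\pi_Y)(\tau,\bar x)=f_\alpha(\tau,\pi_\alpha\bar x)+r(\tau,\bar x)=f(\tau,\bar x)$ for every $\tau\in T$, so $f-f_Y\circ\pi_Y$ is bounded on the $T$-orbit of $\bar x$ and hence a topological coboundary by Fact \ref{fact:GH}; write $f_Y\circ\pi_Y=f+b(\tau\,\cdot)-b$ with $b\in C(X)$. For relative triviality of $f_Y$, take $(\tau_k,y_k)$ in $Y$ with $d_Y(y_k,\tau_k y_k)\to0$ and $f_\alpha(\tau_k,\rho_\alpha y_k)\to0$; lift to $x_k\in\pi_Y^{-1}(y_k)$ with $d(x_k,\tau_k x_k)\to0$ (openness of $\pi_Y$ and continuity of $y\mapsto\pi_Y^{-1}(y)$, exactly as in the proof of the decomposition theorem); then $r(\tau_k,x_k)\to0$ by relative triviality of $r$, and uniform continuity of $b$ gives $b(\tau_k x_k)-b(x_k)\to0$, whence $r_Y(\tau_k,y_k)=r(\tau_k,x_k)-b(\tau_k x_k)+b(x_k)\to0$.
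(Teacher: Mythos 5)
Your overall plan --- push the remainder $r:=f-f_\alpha\circ\pi_\alpha$ down to $Y$ by reading off a graph $\Gamma=\bar{\mc O}_{T,F}(\bar x,0,0)$ of the $\R^2$-valued cocycle $F=(f_\alpha\circ\pi_\alpha,r)$, prove $\Phi$ constant on $\pi_Y$-fibres, and then define $r_Y$ on a dense orbit --- is in the same spirit as the paper, but the two places where you wave your hands are precisely where the paper expends most of its effort, and I do not think your shortcuts close.

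\emph{The descent step.} To show $\Phi(x,s)=\Phi(x',s)$ when $\pi_Y(x)=\pi_Y(x')$ you invoke the regulariser $\tilde X$ of the extension $X_\alpha\to X$ and ``net-diagonalisation around (\ref{eq:G})''. But that regulariser is only a compact \emph{Hausdorff} flow: by Remark \ref{rem:d_n_m} it is metrisable precisely when the extension is isometric, which $X_\alpha\to X$ generically is not. The sequential tools you rely on --- Fact \ref{fact:o_p} giving $\bar{\mc O}=\mc D$, the sequence formulation of relative triviality, the ``a priori bound from continuous $\Phi$'' --- are not available on $\tilde X$, and the nets coming out of (\ref{eq:G}) do not obviously land in the orbit of $\bar x$, so you cannot use the graph $\Gamma$ to bound the $r$-increments along the return. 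The paper avoids this exact difficulty by interposing the factor $(Y_c,T)$ of \emph{connected components} of the $\pi_Y$-fibres: connectedness of the $\pi_c$-fibres gives, via a $\delta$-chain covering, a \emph{uniform} bound $D$ on the oscillation of $b_\tau$ in each fibre (equation (\ref{eq:f_c})), hence $|f-f_c\circ\pi_c|\le 2D$ and $f-f_c\circ\pi_c$ is a coboundary by Fact \ref{fact:GH}. Only after that does it treat the remaining extension $Y_c\to Y$, which by \cite{MMWu} Theorem 3.7 is \emph{isometric}, so its regulariser $\tilde Y$ is a \emph{compact metric} group extension and Lemma \ref{lem:tilde_coc} applies. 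Your single-stage descent would have to re-prove something like Lemma \ref{lem:tilde_coc} on a non-metric space, and the sketch does not do this.

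\emph{Relative triviality of $f_Y$.} You claim to lift a sequence $(\tau_k,y_k)$ in $T\times Y$ with $d_Y(y_k,\tau_k y_k)\to0$ to $x_k\in\pi_Y^{-1}(y_k)$ with $d(x_k,\tau_k x_k)\to0$, citing ``openness of $\pi_Y$ \dots exactly as in the proof of the decomposition theorem.'' But that proof only produces $x_k\in\pi_Y^{-1}(y_k)$ with $x_k\to x$ and $\tau_k x_k\to x'$ for some \emph{possibly distinct} $x,x'$ in the same fibre; it never asserts $d(x_k,\tau_k x_k)\to0$. And in general such a lift need not exist: the bijection $\tau_k:\pi_Y^{-1}(y_k)\to\pi_Y^{-1}(\tau_k y_k)$ can be a nontrivial twist of nearby fibres (e.g.\ a near-half-rotation on a circle fibre), in which case no point of $\pi_Y^{-1}(y_k)$ is moved a small distance even though $d_Y(y_k,\tau_k y_k)\to0$. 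The paper instead proves relative triviality by contradiction: if it failed, the last assertion of Lemma \ref{lem:trans} produces a \emph{new} sequence $(\tilde\tau_k,x_k)$ in $T\times X$ with $d(x_k,\tilde\tau_k x_k)\to0$, $(f_\alpha\circ\pi_\alpha)(\tilde\tau_k,x_k)\to0$ and $(f_c\circ\pi_c)(\tilde\tau_k,x_k)\to\infty$, contradicting Proposition \ref{prop:max} together with boundedness of the transfer function. That route does not require the pointwise lift you assume. (Minor: your identity $r_Y(\tau,\pi_Y(x))=r(\tau,x)-b(\tau x)+b(x)$ has a sign error; the correct relation is $r_Y\circ\pi_Y=r+b\circ\tau-b$.)

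So while the skeleton is right, the two load-bearing steps are gaps, and closing them essentially reconstructs the paper's two-stage argument ($Y_c$ via connected fibres, then the isometric remainder via the metric regulariser and Lemma \ref{lem:tilde_coc}), plus the Lemma \ref{lem:trans}-style contradiction for relative triviality.
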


We shall prove another technical lemma first.

\begin{lemma}\label{lem:tilde_coc}
Let $(Z,T)$ be a distal minimal compact metric flow which extends $(X_\alpha,T)=\sigma_\alpha(Z,T)$, and let $G\subset\textup{Aut}(Z,T)$ be a Hausdorff topological group preserving the fibres of $\sigma_\alpha$.
Suppose that there exists a continuous group homomorphism $\varphi:G\longrightarrow\R$ so that for every $g\in G$ and every $z\in Z$ it holds that
\begin{equation*}
(g(z),\varphi(g))\in\mc D_{T,f_\alpha\circ\sigma_\alpha}(z,0) .
\end{equation*}
Furthermore, suppose that $h(\tau,z)$ is a real-valued cocycle of $(Z,T)$ which is relatively trivial with respect to $(f_\alpha\circ\sigma_\alpha)(\tau,z)$.
Then there exists a continuous cocycle $\bar h((\tau,g),z)$ of the flow $(Z,T\times G)$ with the action $\{g\circ\tau:(\tau,g)\in T\times G\}$ so that $h(\tau,z)=\bar h((\tau,\mathbf 1_G),z)$ holds for every $(\tau,z)\in T\times Z$ and the mapping $h\mapsto\bar h$ is linear.
For $z\in Z$, $g\in G$, and a sequence $\{(\tau_k,z_k)\}_{k\geq 1}\subset T\times Z$ with $z_k\to z$, $\tau_k z_k\to g(z)$, and $(f_\alpha\circ\rho_\alpha)(\tau_k,z_k)\to\varphi(g)$ holds
\begin{equation}\label{eq:h_uni}
h(\tau_k,z_k)\to\bar h((\mathbf 1_T,g),z)\quad\textup{as}\enspace k\to\infty.
\end{equation}
\end{lemma}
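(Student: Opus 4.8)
The plan is to build $\bar h((\tau,g),z)$ by prescribing its value on a ``test'' trajectory and then propagating it by the cocycle identity and the density of $T$-orbits. First I would fix a point $z_0\in Z$ whose orbit under $\widetilde\tau_{f_\alpha\circ\sigma_\alpha}$ through $(z_0,0)$ is dense (such a point exists by Fact \ref{fact:o_p} together with the point transitivity of $\widetilde\tau_{f_\alpha}$, lifted along $\sigma_\alpha$), and for each $g\in G$ choose nets $\{\tau_i\}\subset T$, $\{z_i\}\subset Z$ with $z_i\to z_0$, $\tau_i z_i\to g(z_0)$, $(f_\alpha\circ\sigma_\alpha)(\tau_i,z_i)\to\varphi(g)$, guaranteed by the hypothesis $(g(z_0),\varphi(g))\in\mc D_{T,f_\alpha\circ\sigma_\alpha}(z_0,0)$. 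Relative triviality of $h$ with respect to $f_\alpha\circ\sigma_\alpha$ forces $h(\tau_i,z_i)$ to converge (one must first pass from the finite-$\varphi(g)$ version of Definition \ref{def:rnt} — which gives convergence of $h(\tau_i,\bar z)$ along a \emph{fixed} base point — to convergence along the net $z_i\to z_0$; this is handled exactly as in the proof of Lemma \ref{lem:trans}, by fixing $\tau\in T$ with $\tau z_0$ near the target and using the cocycle identity $h(\tau_i,\tau z_i)=h(\tau,\tau_i z_i)+h(\tau_i,z_i)-h(\tau,z_i)$ together with continuity of $h(\tau,\cdot)$). One checks the limit is independent of the chosen net, again by a diagonalisation/cocycle-identity argument, and \emph{defines} $\bar h((\mathbf 1_T,g),z_0)$ to be this common limit. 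By the same diagonalisation over the dense $T$-orbit of $z_0$ one transports the definition to every $z\in Z$, obtaining a function $z\mapsto\bar h((\mathbf 1_T,g),z)$, and finally one sets
\begin{equation*}
\bar h((\tau,g),z)=h(\tau,z)+\bar h((\mathbf 1_T,g),\tau z).
\end{equation*}

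Next I would verify that this $\bar h$ is a cocycle of $(Z,T\times G)$ for the action $(\tau,g)\cdot z=g(\tau z)$. The cocycle identity splits into three checks: the $T$-part is inherited from $h$; the $G$-part, $\bar h((\mathbf 1_T,g_1g_2),z)=\bar h((\mathbf 1_T,g_1),g_2 z)+\bar h((\mathbf 1_T,g_2),z)$, follows from composing the approximating nets for $g_1$ and $g_2$ (using that $G\subset\textup{Aut}(Z,T)$ commutes with the $T$-action, so $\varphi(g_1g_2)=\varphi(g_1)+\varphi(g_2)$ and the concatenated net realises $(g_1g_2(z),\varphi(g_1g_2))$ as a prolongation point) combined with Remark \ref{rem:o_p}; and the mixed identity relating $\bar h((\tau,g),z)$ to $h$ and $\bar h((\mathbf 1_T,g),\cdot)$ is exactly the defining formula once one knows $z\mapsto\bar h((\mathbf 1_T,g),z)$ transforms correctly under $T$, which is itself a cocycle-identity manipulation. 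Linearity of $h\mapsto\bar h$ is immediate since every step — taking limits of nets, the defining formula — is linear in $h$. The normalisation $h(\tau,z)=\bar h((\tau,\mathbf 1_G),z)$ holds because $\bar h((\mathbf 1_T,\mathbf 1_G),z)=0$ (the constant net works).

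Continuity of $\bar h$ on $T\times G\times Z$ and the convergence assertion \eqref{eq:h_uni} would be proved together, since \eqref{eq:h_uni} is essentially a restatement of continuity at points of the form $(\mathbf 1_T,g,z)$ combined with the definition: if $z_k\to z$, $\tau_k z_k\to g(z)$, and $(f_\alpha\circ\sigma_\alpha)(\tau_k,z_k)\to\varphi(g)$, then $(\tau_k,z_k)$ is, up to the fixed-base-point reduction above, an admissible net computing $\bar h((\mathbf 1_T,g),z)$, so $h(\tau_k,z_k)\to\bar h((\mathbf 1_T,g),z)$; feeding this into the defining formula $\bar h((\tau,g),z)=h(\tau,z)+\bar h((\mathbf 1_T,g),\tau z)$ and using continuity of $h$ and of the $T$-action gives joint continuity. \textbf{The main obstacle} I expect is precisely the well-definedness of $\bar h((\mathbf 1_T,g),z)$: showing the limit of $h(\tau_i,z_i)$ is finite and independent of the approximating net. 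Finiteness is where relative triviality is used in an essential way — it rules out escape to $\pm\infty$, since an unbounded subsequence would, after the fixed-base-point reduction, produce via Lemma \ref{lem:trans} a sequence witnessing $(f_\alpha\circ\sigma_\alpha)(\bar\tau_k,\bar z)\to 0$ with $h(\bar\tau_k,\bar z)\to\infty$, contradicting relative triviality; and net-independence requires combining two such nets into a single one realising $(z,0)\to(z,0)$ in the prolongation with controlled $f_\alpha\circ\sigma_\alpha$-values, then invoking relative triviality once more. Everything else is routine bookkeeping with the cocycle identity.
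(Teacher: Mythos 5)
Your overall strategy matches the paper's: define $\bar h((\mathbf 1_T,g),\cdot)$ as the limit of $h$ along sequences approximating $(g(z),\varphi(g))$ in the prolongation, then extend by the cocycle identity over $T$ and continuity. Two steps, however, contain genuine gaps.

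\textbf{The choice of reference point.} You propose to fix $z_0\in Z$ with dense $\widetilde\tau_{f_\alpha\circ\sigma_\alpha}$-orbit through $(z_0,0)$, ``by point transitivity of $\widetilde\tau_{f_\alpha}$ lifted along $\sigma_\alpha$''. But point transitivity does \emph{not} lift through $\sigma_\alpha$: when $(Z,T)\to(X_\alpha,T)$ is a proper extension, $\widetilde\tau_{f_\alpha\circ\sigma_\alpha}$ is typically not point transitive on $Z\times\R$ (indeed, for the Rokhlin extension $Z=X_\alpha\times M$ with $\sigma_\alpha=\rho_\alpha$, orbit closures are contained in $X_\alpha\times\{(\phi^t(m),t):t\in\R\}^{-}$). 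More importantly, even if it were, density is not the right property: what the argument needs is that the orbit closure and the prolongation of a base point \emph{coincide} for the $\R^2$-valued cocycle $F=(f_\alpha\circ\sigma_\alpha,h)$ in the compactification $Z\times(\R_\infty)^2$. This is exactly what Fact \ref{fact:o_p} supplies for a generic $\bar z$, with no transitivity assumption, and the paper's proof relies on it crucially.

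\textbf{Net-independence.} Your plan ``combine two nets into a single one realising $(z,0)\to(z,0)$ in the prolongation with controlled $f_\alpha\circ\sigma_\alpha$-values, then invoke relative triviality'' does not explain how the combination is done. If $(\tau_k^1,z_k^1)$ and $(\tau_k^2,z_k^2)$ both approximate $(g(z),\varphi(g))$ from $(z,0)$ but give $h$-limits $\bar h_1\neq\bar h_2$, the natural candidate $\tau_k^2(\tau_k^1)^{-1}$ applied to $\tau_k^1 z_k^1$ does not reproduce $h(\tau_k^2,z_k^2)$ because the base points $z_k^1,z_k^2$ differ; one cannot directly subtract the two nets. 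The paper sidesteps this by recording the ambiguity as two distinct points $(g(z),\varphi(g),\bar h_i)\in\mc D_{T,F}(z,0,0)$, transporting it along the dense $T$-orbit of $z$ (the cocycle identity, together with $f_\alpha\circ\sigma_\alpha\circ g=f_\alpha\circ\sigma_\alpha$, preserves the ``gap'' $\bar h_1-\bar h_2$) into $\mc D_{T,F}(\bar z,0,0)$, and then using $\bar{\mc O}_{T,F}(\bar z,0,0)=\mc D_{T,F}(\bar z,0,0)$ to realise the two distinct third-coordinates on the honest orbit of $\bar z$. Only there does the \emph{fixed-base-point} form of relative triviality (the second sentence of Definition \ref{def:rnt}) give a contradiction. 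You would need to reproduce this chain; the cocycle-identity bookkeeping for the $G$-part of $\bar h$ and the final continuity statement are, as you say, routine once the above is in place.
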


\begin{proof}
We put $F=(f_\alpha\circ\sigma_\alpha,h):T\times Z\longrightarrow\R^2$ and fix a point $\bar z\in Z$ so that $\bar{\mc O}_{T,F} (\bar z,0,0)$ and $\mc D_{T,F} (\bar z,0,0)$ coincide in $Z\times(\R_\infty)^2$ (cf. Fact \ref{fact:o_p}).
For every $g\in G$ we fix a sequence $\{\tau_k^g\}_{k\geq 1}\subset T$ with $\widetilde{(\tau_k^g)}_{f_\alpha\circ\sigma_\alpha}(\bar z,0)\to(g(\bar z),\varphi(g))$ as $k\to\infty$, with $\{\tau_k^{\mathbf 1_G}=\mathbf 1_T\}_{k\geq 1}$.
Since $g\in\textup{Aut}(Z,T)$ and $f_\alpha\circ\sigma_\alpha\circ g=f_\alpha\circ\sigma_\alpha$, we can conclude for every $t\in T$ that $\tau_k^gt\bar z=t\tau_k^g\bar z\to t g(\bar z)=g(t\bar z)$ as $k\to\infty$ as well as
\begin{equation}\label{eq:transf}
(f_\alpha\circ\sigma_\alpha)(\tau_k^g,t\bar z)=(f_\alpha\circ\sigma_\alpha)(t, \tau_k^g\bar z)+(f_\alpha\circ\sigma_\alpha)(\tau_k^g,\bar z)-(f_\alpha\circ\sigma_\alpha)(t,\bar z)\to\varphi(g).
\end{equation}
By the relative triviality of $h(\tau,z)$ with respect to $(f_\alpha\circ\sigma_\alpha)(\tau,z)$, the sequence $\{h(\tau\tau_k^g,t\bar z)\}_{k\geq 1}$ converges for all $\tau,t\in T$.
Thus we can put
\begin{equation}\label{eq:def_g}
\bar h((\tau,g),t\bar z)=\lim_{k\to\infty} h(\tau\tau_k^g,t\bar z)=h(\tau,g(t\bar z))+\lim_{k\to\infty} h(\tau_k^g,t\bar z)
\end{equation}
for every $(\tau, g,t\bar z)\in T\times G\times Z$.
Suppose that there exist sequences $\{(\tau_k^{i},z_k^i)\}_{k\geq 1}\subset T\times Z$ for $i=1,2$ so that $z_k^i\to z$, $\tau_k^i z_k^i\to g(z)=\lim_{k\to\infty}g(z_k^i)$, and
\begin{equation*}
(f_\alpha\circ\sigma_\alpha)(\tau_k^{i},z_k^i)\to\varphi(g) \quad\textup{as}\enspace k\to\infty,
\end{equation*}
while for $i=1,2$ the limit points $\bar h_i=\lim_{k\to\infty} h(\tau_k^{i},z_k^i)\in\R_\infty$ are either distinct or both equal to $\infty$.
Then $(g(z),\varphi(g),\bar h_i)\in\mc D_{T,F} (z,0,0)$ for $i=1,2$, and for every $\tau'\in T$ follows from $g\in\textup{Aut}(Z,T)$ and the cocycle identity that
\begin{eqnarray*}
(g(\tau' z),\varphi(g)+(f_\alpha\circ\sigma_\alpha)(\tau',g(z))-(f_\alpha\circ\sigma_\alpha)(\tau',z),h(\tau',g(z))+\bar h_i-h(\tau',z))=\hspace{-5mm}\\
(g(\tau' z),\varphi(g),h(\tau',g(z))+\bar h_i-h(\tau',z))\in\mc D_{T,F}(\tau' z,0,0).
\end{eqnarray*}
Since $\bar{\mc O}_T(z)=Z$, either there are distinct points $a_1,a_2\in\R_\infty$ with $(g(\bar z),\varphi(g),a_i)\in\mc D_{T,F} (\bar z,0,0)$ or it holds that $(g(\bar z),\varphi(g),\infty)\in\mc D_{T,F} (\bar z,0,0)$.
In either case, since $\bar{\mc O}_{T,F} (\bar z,0,0)=\mc D_{T,F} (\bar z,0,0)$ in $Z\times(\R_\infty)^2$, this contradicts to the relative triviality of $h(\tau,z)$ with respect to $(f_\alpha\circ\sigma_\alpha)(\tau,z)$.
Therefore equality (\ref{eq:h_uni}) holds true, and the definition (\ref{eq:def_g}) extends uniquely from the $T$-orbit of $\bar z$ to a continuous mapping $\bar h:T\times G\times Z\longrightarrow\R$ since the action of $T\times G$ on $X$ and $\varphi$ are continuous.

For the cocycle identity let $(\tau_1,g_1),(\tau_2,g_2)\in T\times G$ be arbitrary with sequences $\{\tau_k^{g_1}\}_{k\geq 1},\{\tau_k^{g_2}\}_{k\geq 1}\subset T$.
By equality (\ref{eq:transf}) we select a sequence $\{k_l\}_{l\geq 1}\subset\N$ with
\begin{equation*}
\tau_{k_l}^{g_2}\tau_l^{g_1}\bar z=\tau_l^{g_1}\tau_{k_l}^{g_2}\bar z\to g_2(g_1(\bar z))\enspace\textup{and}\enspace (f_\alpha\circ\sigma_\alpha)(\tau_l^{g_1}\tau_{k_l}^{g_2},\bar z)\to\varphi(g_2)+\varphi(g_1)=\varphi(g_2 g_1)
\end{equation*}
as $l\to\infty$.
Thus we can put $\{\tau_l^{g_2 g_1}\}_{l\geq 1}=\{\tau_l^{g_1}\tau_{k_l}^{g_2}\}_{l\geq 1}$, and for every $t\in T$ the equality (\ref{eq:transf}) implies that $\tau_{k_l}^{g_2}\tau_2 t\bar z\to g_2(\tau_2 t\bar z)$, $\tau_l^{g_1}\tau_{k_l}^{g_2}\tau_2 t\bar z\to(g_2 g_1)(\tau_2 t\bar z)$, and
\begin{eqnarray*}
(f_\alpha\circ\sigma_\alpha)(\tau_1\tau_l^{g_1},\tau_{k_l}^{g_2}\tau_2 t\bar z)=\hspace{7.5cm}\\
(f_\alpha\circ\sigma_\alpha)(\tau_1,\tau_l^{g_1}\tau_{k_l}^{g_2}\tau_2 t\bar z)+(f_\alpha\circ\sigma_\alpha)(\tau_l^{g_1}\tau_{k_l}^{g_2},\tau_2 t\bar z)-(f_\alpha\circ\sigma_\alpha)(\tau_{k_l}^{g_2},\tau_2 t\bar z)\\
\to (f_\alpha\circ\sigma_\alpha)(\tau_1,g_2(\tau_2 t\bar z))+\varphi(g_2g_1)-\varphi(g_2) \quad\textup{as}\enspace l\to\infty.
\end{eqnarray*}
The uniqueness according to equality (\ref{eq:h_uni}) verifies that $\bar h((\tau_1,g_2 g_1 g_2^{-1}),g_2(\tau_2 t\bar z))=\lim_{l\to\infty}h(\tau_1\tau_l^{g_1},\tau_2\tau_{k_l}^{g_2}t\bar z)$, and therefore
\begin{eqnarray*}
\bar h((\tau_1,g_2 g_1 g_2^{-1}),g_2(\tau_2t\bar z))+\bar h((\tau_2,g_2),t\bar z)=\hspace{4.5cm}\\
=\lim_{l\to\infty}h(\tau_1\tau_l^{g_1},\tau_2\tau_{k_l}^{g_2}t\bar z)+\lim_{l\to\infty}h(\tau_2\tau_{k_l}^{g_2},t\bar z)=\\
=\lim_{l\to\infty}h(\tau_1\tau_l^{g_1}\tau_2\tau_{k_l}^{g_2},t\bar z)=\bar h((\tau_1\tau_2,g_2g_1),t\bar z) .
\end{eqnarray*}
We substitute $g_2^{-1} g_1 g_2$ for $g_1$ and obtain from $\bar{\mc O}_T(\bar z)=Z$ and the continuity of $\bar h$ that cocycle identity is valid.
\end{proof}

\begin{proof}[Proof of Proposition \ref{prop:res}]
Let $(Y_c,T)=\pi_c(X,T)$ be the flow defined by the connected components of the fibres of $\pi_Y$ (cf. \cite{MMWu}, Definition 2.2), and let $\rho$ be the homomorphism from $(Y_c,T)$ onto $(Y,T)=\rho(Y_c,T)$.
With a \emph{RIM} $\{\mu_{c,y}:y\in Y_c\}$ for the distal extension $(Y_c,T)=\pi_c(X,T)$ we define a cocycle $f_c(\tau,y)=\mu_{c,y}(f(\tau,\cdot))$ for every $(\tau,y)\in T\times Y_c$.
We fix a point $\bar x\in X$ with $\mc D_{T,f_\alpha\circ\pi_\alpha}(\tau\bar x,0)=\bar{\mc O}_{T,f_\alpha\circ\pi_\alpha}(\tau\bar x,0)$ for all $\tau\in T$.
By equality (\ref{eq:p_X}) and $\pi_c^{-1}(\pi_c(\tau\bar x))\subset\pi_Y^{-1}(\pi_Y(\tau\bar x))$ holds for all $\tau\in T$
\begin{equation*}
\bar{\mc O}_{T,f_\alpha\circ\pi_\alpha}(\bar x,0)\cap(\pi_c^{-1}(\pi_c(\tau\bar x))\times\R)=\pi_c^{-1}(\pi_c(\tau\bar x))\times\{(f_\alpha\circ\pi_\alpha)(\tau,\bar x)\} .
\end{equation*}
Let $F(\tau,x)$ be the $\R^2$-valued cocycle $(f_\alpha\circ\pi_\alpha,f)$.
We shall verify that
\begin{eqnarray}\label{eq:f_c}
\bar{\mc O}_{T,F} (\bar x,0,0)\cap(\pi_c^{-1}(\pi_c(\tau\bar x))\times\{(f_\alpha\circ\pi_\alpha)(\tau,\bar x)\}\times\R)=\nonumber\\
\{(x,(f_\alpha\circ\pi_\alpha)(\tau,\bar x),b_\tau(x)):x\in\pi_c^{-1}(\pi_c(\tau\bar x))\}
\end{eqnarray}
for every $\tau\in T$, in which $b_\tau:\pi_c^{-1}(\pi_c(\tau\bar x))\longrightarrow\R$ is a continuous function.
Indeed, for a sequence $\{\tau_k\}_{k\geq 1}\subset T$ with $\tau_k\tau\bar x\to x\in\pi_c^{-1}(\pi_c(\tau\bar x))$ and $(f_\alpha\circ\pi_\alpha)(\tau_k,\tau\bar x)\to 0$ follows by the relative triviality of $(f-f_\alpha\circ\pi_\alpha)(\tau,x)$ the existence and uniqueness of the limit $b_\tau(x)$ of $f(\tau_k,\tau\bar x)$.
It also follows that for every $\varepsilon>0$ there exists a $\delta>0$ so that for all $\tau\in T$ and $x,x'\in\pi_c^{-1}(\pi_c(\tau\bar x))$ with $d(x,x')<\delta$ holds $|b_\tau(x)-b_\tau(x')|<\varepsilon$.
Since the fibres of $\pi_c$ are connected, a covering of $X$ by $\delta$-neighbourhoods provides a constant $D>0$ with $|b_\tau(x)-b_\tau(x')|<D$ for all $\tau\in T$ and $x,x'\in\pi_c^{-1}(\pi_c(\tau\bar x))$.
Equality (\ref{eq:f_c}) shows for $x\in\pi_c^{-1}(\pi_c(\bar x))$ and $\tau\in T$ that $b_{\mathbf 1_T}(x)+f(\tau,x)=b_{\tau}(\tau x)$ and hence $|f(\tau,\bar x)-f(\tau,x)|\leq 2D$.
Since $(f-f_c\circ\pi_c)(\tau,x)$ assumes zero on each $\pi_c$-fibre, for all $\tau\in T$ holds $|(f-f_c\circ\pi_c)(\tau,\bar x)|< 2D$ so that this is a coboundary.

Due to Theorem 3.7 in \cite{MMWu} the extension from $(Y,T)$ to $(Y_c,T)$ is an isometric extension, and by Fact \ref{fact:reg_ex} there exists a compact group extension $(\tilde Y,T)$ of $(Y,T)=\rho(Y_c,T)$ by $G\subset\textup{Aut}(\tilde Y,T)$ so that $(Y_c,T)=\sigma(\tilde Y,T)$ is the orbit space of a compact subgroup $H\subset G$.
For every sequence $\{(\tau_k,\tilde y_k)\}_{k\geq 1}\subset T\times\tilde Y$ with $d_{\tilde Y} (\tilde y_k,\tau_k\tilde y_k)\to 0$ and $(f_\alpha\circ\rho_\alpha\circ\rho\circ\sigma)(\tau_k,\tilde y_k)\to 0$ holds $(f_c\circ\sigma)(\tau_k,\tilde y_k)\to 0$.
Otherwise, by Lemma \ref{lem:trans} there exists a sequence $\{(\tilde\tau_k,x_k)\}_{k\geq 1}\subset T\times X$ so that $d(x_k,\tilde\tau_k x_k)\to 0$, $(f_\alpha\circ\pi_\alpha)(\tilde\tau_k,x_k)\to 0$, and $(f_c\circ\pi_c)(\tilde\tau_k,x_k)\to\infty$, which contradicts to Proposition \ref{prop:max} and the boundedness of the transfer function between $f$ and $f_c\circ\pi_c$.
We can apply Lemma \ref{lem:tilde_coc} for the flow $(\tilde Y,T)$, the cocycle $h=f_c\circ\sigma$, the group $G\subset\textup{Aut}(\tilde Y,T)$, and the group homomorphism $\varphi\equiv 0$, and we obtain a real valued cocycle $\bar h((\tau,h),\tilde y)$ with $\bar h((\tau,\mathbf 1_G),\tilde y)=(f_c\circ\sigma)(\tau,\tilde y)$ for every $(\tau,\tilde y)\in T\times\tilde Y$.
We define a topological cocycle of $(Y,T)$ by $f_Y(\tau,y)=\mu_{Y,y}((f_c\circ\sigma)(\tau,\cdot))$, where $\{\mu_{Y,y}:y\in Y\}$ is the \emph{RIM} for the extension $(Y,T)=\rho\circ\sigma(\tilde Y,T)$.
From the cocycle identity $\bar h((\tau,g),\tilde y)=(f_c\circ\sigma)(\tau,g(\tilde y))+\bar h((\mathbf 1_T,g),\tilde y)=\bar h((\mathbf 1_T,g),\tau\tilde y)+(f_c\circ\sigma)(\tau,\tilde y)$ and the boundedness of $\bar h((\mathbf 1_T,g),\tilde y)$ for $(g,\tilde y)\in G\times\tilde Y$ we can conclude that $(f_c\circ\sigma-f_Y\circ\rho\circ\sigma)(\tau,\tilde y)$ is uniformly bounded and a coboundary.
Hence also $(f_c-f_Y\circ\rho)(\tau,y_c)$ is a coboundary, and the relative triviality of $f_Y(\tau,y)$ with respect to $(f_\alpha\circ\rho_\alpha)(\tau,y)$ can be verified as above for the cocycle $(f_c\circ\sigma)(\tau,\tilde y)$.
\end{proof}

\begin{proposition}\label{prop:inc}
The cocycle $(f_Y-f_\alpha\circ\rho_\alpha)(\tau,y)$ of the flow $(Y,T)$ can be extended to a cocycle $\bar f((\tau,t),y)$ of the $T\times\R$-flow $(Y,\{\psi^t\circ\tau:(\tau,t)\in T\times\R\})$ so that
\begin{equation*}
(f_Y-f_\alpha\circ\rho_\alpha)(\tau,y)=\bar f((\tau,0),y)
\end{equation*}
for every $(\tau,y)\in T\times Y$.
We put $L_\tau (x_\alpha,m)=(\tau x_\alpha,m)=\psi^{-f_\alpha(\tau,x_\alpha)}(\tau (x_\alpha,m))$ for $(x_\alpha,m)\in X_\alpha\times M=Y$.
Then for arbitrary $\bar x\in X_\alpha$ there exists a continuous function $\bar b:Y\longrightarrow\R$ with $\bar b(\rho_\alpha^{-1}(\bar x))=\{0\}$ so that for every $(\tau,y)\in T\times Y$ holds
\begin{equation}\label{eq:tf}
\bar f((\tau,-(f_\alpha\circ\rho_\alpha)(\tau,y)),y)=\bar b(\psi^{-(f_\alpha\circ\rho_\alpha)(\tau,y)} (\tau y))-\bar b(y) =\bar b(L_\tau y)-\bar b(y) ,
\end{equation}
and $(\tau,y)\mapsto\bar f((\tau,-(f_\alpha\circ\rho_\alpha)(\tau,y)),y)$ is a topological coboundary of the distal flow $(Y,\{L_\tau:\tau\in T\})$ with transfer function $\bar b$.
For every $(x_\alpha,m)\in Y$ and $t\in\R$ holds
\begin{equation}\label{eq:tF}
\bar f((\mathbf 1_T,t),(x_\alpha,m))= \bar f((\mathbf 1_T,t),(\bar x,m))+\bar b(x_\alpha,\phi^t(m))-\bar b(x_\alpha,m) .
\end{equation}
\end{proposition}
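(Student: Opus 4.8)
The whole statement rests on Lemma~\ref{lem:tilde_coc}. The plan is to apply it with $(Z,T)=(Y,T)$, $\sigma_\alpha=\rho_\alpha$, the one-parameter group $G=\{\psi^t:t\in\R\}\subset\textup{Aut}(Y,T)$, the continuous homomorphism $\varphi(\psi^t)=t$, and the cocycle $h=f_Y-f_\alpha\circ\rho_\alpha$ of $(Y,T)$. Its hypotheses hold: the relation $(\psi^t(y),t)\in\mc D_{T,f_\alpha\circ\rho_\alpha}(y,0)$ for \emph{every} $t\in\R$ and $y\in Y$ is precisely what the construction of $\{\psi^t\}$ in Proposition~\ref{prop:flow} gives after applying $\pi_Y\times\textup{id}_\R$ to the membership built into the sets $G_t$ of \eqref{eq:G} (recall $f_\alpha\circ\pi_\alpha=f_\alpha\circ\rho_\alpha\circ\pi_Y$), and $h$ is relatively trivial with respect to $f_\alpha\circ\rho_\alpha$ since $f_Y$ is so by Proposition~\ref{prop:res} and relative triviality is additive in the numerator cocycle. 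Lemma~\ref{lem:tilde_coc} then produces a continuous cocycle $\bar h((\tau,\psi^t),y)$ of $(Y,T\times G)$ with $\bar h((\tau,\mathbf 1_G),y)=h(\tau,y)$, and one sets $\bar f((\tau,t),y)=\bar h((\tau,\psi^t),y)$; this is a cocycle of $(Y,\{\psi^t\circ\tau\})$ with $\bar f((\tau,0),y)=(f_Y-f_\alpha\circ\rho_\alpha)(\tau,y)$, the first assertion.

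Now put $c(\tau,y)=\bar f\big((\tau,-(f_\alpha\circ\rho_\alpha)(\tau,y)),y\big)$. A short computation from the cocycle identity of $\bar f$, using that $f_\alpha\circ\rho_\alpha$ is constant on $\rho_\alpha$-fibres, that $L_\tau y=\psi^{-(f_\alpha\circ\rho_\alpha)(\tau,y)}(\tau y)$, and the resulting identity $(f_\alpha\circ\rho_\alpha)(\tau_1\tau_2,y)=(f_\alpha\circ\rho_\alpha)(\tau_1,L_{\tau_2}y)+(f_\alpha\circ\rho_\alpha)(\tau_2,y)$, shows $c$ is a cocycle of the distal flow $(Y,\{L_\tau:\tau\in T\})$. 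The decisive point is the implication
\begin{equation*}
d_Y(y_k,L_{\tau_k}y_k)\longrightarrow 0\qquad\Longrightarrow\qquad c(\tau_k,y_k)\longrightarrow 0 .
\end{equation*}
To obtain it, observe that the element $\big(\tau_k,\psi^{-(f_\alpha\circ\rho_\alpha)(\tau_k,y_k)}\big)$ of $T\times G$ sends $y_k$ to $L_{\tau_k}y_k$, and that the cocycle $((\tau,g),y)\mapsto(f_\alpha\circ\rho_\alpha)(\tau,y)+\varphi(g)$ of $(Y,T\times G)$ vanishes there; since $(\psi^t(y),t)\in\mc D_{T,f_\alpha\circ\rho_\alpha}(y,0)$, the relative triviality of $h$ with respect to $f_\alpha\circ\rho_\alpha$ propagates, through equality \eqref{eq:h_uni} and a diagonalisation, to $\bar h$ and forces $c(\tau_k,y_k)\to 0$.

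The transfer function is then produced fibrewise. For each $m\in M$ the cocycle $c^{(m)}(\tau,x_\alpha)=c(\tau,(x_\alpha,m))$ of the minimal flow $(X_\alpha,T)$ is bounded on every sequence with $d(x^k_\alpha,\tau_k x^k_\alpha)\to 0$ (by the displayed implication with $y_k=(x^k_\alpha,m)$), hence a topological coboundary by Fact~\ref{fact:GH}; writing $c^{(m)}(\tau,x_\alpha)=b^{(m)}(\tau x_\alpha)-b^{(m)}(x_\alpha)$ with $b^{(m)}$ continuous and $b^{(m)}(\bar x)=0$, set $\bar b(x_\alpha,m)=b^{(m)}(x_\alpha)$. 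The family $\{b^{(m)}\}_{m\in M}$ is equicontinuous: otherwise, approximating $x^k_\alpha$ by points of the dense $T$-orbit of a nearby $x'^k_\alpha$ and diagonalising produces elements $\tau_k$ with $d(x'^k_\alpha,\tau_k x'^k_\alpha)\to 0$ but $|c^{(m_k)}(\tau_k,x'^k_\alpha)|$ bounded below, contradicting the displayed implication with $y_k=(x'^k_\alpha,m_k)$. Equicontinuity together with $b^{(m)}(\bar x)=0$ makes $\{b^{(m)}\}$ uniformly bounded and, with the continuity of $c$, yields joint continuity of $\bar b$; moreover $\bar b(\rho_\alpha^{-1}(\bar x))=\{0\}$. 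Then $c(\tau,y)=\bar b(L_\tau y)-\bar b(y)$, which is \eqref{eq:tf} and exhibits $(\tau,y)\mapsto\bar f\big((\tau,-(f_\alpha\circ\rho_\alpha)(\tau,y)),y\big)$ as a coboundary of $(Y,\{L_\tau\})$ with transfer function $\bar b$.

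For \eqref{eq:tF} one expands $\bar f\big((\tau,-(f_\alpha\circ\rho_\alpha)(\tau,(\bar x,m))+t),(\bar x,m)\big)$ in the two orders permitted by the cocycle identity of $\bar f$, obtaining $\bar f((\mathbf 1_T,t),(\tau\bar x,m))+c(\tau,(\bar x,m))=\bar f((\mathbf 1_T,t),(\bar x,m))+c(\tau,(\bar x,\phi^t(m)))$; since $c(\tau,(\bar x,\cdot))=\bar b(\tau\bar x,\cdot)$ this is \eqref{eq:tF} on the dense set $\{(\tau\bar x,m):\tau\in T,\ m\in M\}$, hence everywhere by continuity of $\bar f$ and $\bar b$. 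The main obstacle is the displayed implication and the equicontinuity it produces: everything hinges on carrying the relative triviality of $f_Y$ across Lemma~\ref{lem:tilde_coc}, i.e. on the fact that once the $\R$-parameter is tied to $-(f_\alpha\circ\rho_\alpha)(\tau,y)$ the combined cocycle $(f_\alpha\circ\rho_\alpha)(\tau,y)+\varphi(g)$ vanishes, so the diagonalisations genuinely stay inside the regime controlled by Proposition~\ref{prop:res} while the constancy of $f_\alpha\circ\rho_\alpha$ on $\rho_\alpha$-fibres absorbs the $M$-coordinate.
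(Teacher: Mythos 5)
Your proposal is correct and follows essentially the same route as the paper: apply Lemma~\ref{lem:tilde_coc} with $G=\{\psi^t\}$ and $\varphi(\psi^t)=t$, use the relative triviality of $f_Y-f_\alpha\circ\rho_\alpha$ carried across that lemma to show the twisted cocycle $c(\tau,y)=\bar f\bigl((\tau,-(f_\alpha\circ\rho_\alpha)(\tau,y)),y\bigr)$ tends to $0$ along sequences with $d_Y(y_k,L_{\tau_k}y_k)\to 0$, and then invoke Fact~\ref{fact:GH} on the $\{L_\tau\}$-orbit closures $X_\alpha\times\{m\}$. The only minor divergences are that you assemble $\bar b$ directly as $\bar b(x_\alpha,m)=b^{(m)}(x_\alpha)$ and verify equicontinuity and separate continuity by hand, whereas the paper packages the boundedness of the auxiliary cocycle $f_{\bar x}=f'-f'((\mathbf 1_T,\cdot),(\bar x,\cdot))$ of the full $T\times\R$-flow and applies Fact~\ref{fact:GH} once to get a jointly continuous $\bar b$; and you derive \eqref{eq:tF} by expanding the cocycle identity for $\bar f$ in two orders and passing to the closure of $\{\tau\bar x\}$, while the paper reads it off from the coboundary relation for $f_{\bar x}$. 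Both yield the same $\bar b$ (with the normalisation $b^{(m)}(\bar x)=0$), so the two arguments are interchangeable.
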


\begin{proof}
Since Lemma \ref{lem:tilde_coc} can be applied to the cocycle $h=f_Y-f_\alpha\circ\rho_\alpha$, the group $G=\{\psi^t:t\in\R\}\subset\textup{Aut}(Y,T)$, and the group homomorphism $\varphi=\textup{id}_\R$, it provides the cocycle $\bar f((\tau,t),y)$.
The mapping $f'((\tau,t),y)=\bar f((\tau,t-(f_\alpha\circ\rho_\alpha)(\tau,y)),y)$ fulfils
\begin{eqnarray*}
f'((\tau,t),\psi^{t'}(L_{\tau'} y))+f'((\tau',t'),y)=\hspace{55mm}\\
=\bar f((\tau,t-(f_\alpha\circ\rho_\alpha)(\tau,\psi^{t'}(L_{\tau'} y))),\psi^{t'}(L_{\tau'} y))+\bar f((\tau',t'-(f_\alpha\circ\rho_\alpha)(\tau',y)),y)=\hspace{-7mm}\\
=\bar f((\tau\tau',t+t'-(f_\alpha\circ\rho_\alpha)(\tau\tau',y)),y)=f'((\tau\tau',t+t'),y)
\end{eqnarray*}
and is thus a cocycle of the \emph{minimal} flow $(Y,\{\psi^t\circ L_\tau:(\tau,t)\in T\times\R\})$.
Now let $(\tau,y)\in T\times Y$ be arbitrary.
By equality (\ref{eq:o_flow}) and the density of $\widetilde{\tau}_{f_\alpha}$-transitive points in $X_\alpha$, there exists a sequence $\{(\tau_k,y_k)\}_{k\geq 1}\subset T\times Y$ so that $y_k\to\tau y$, $\tau_k y_k\to\psi^{-(f_\alpha\circ\rho_\alpha)(\tau,y)}(\tau y)$, and $(f_\alpha\circ\rho_\alpha)(\tau_k,y_k)\to -(f_\alpha\circ\rho_\alpha)(\tau,y)$ as $k\to\infty$, and by equality (\ref{eq:h_uni}) holds $(f_Y-f_\alpha\circ\rho_\alpha)(\tau_k,y_k)\to\bar f((\mathbf 1_T,-(f_\alpha\circ\rho_\alpha)(\tau,y)),\tau y)$.
Thus
\begin{equation*}
(f_Y-f_\alpha\circ\rho_\alpha)(\tau_k\tau,\tau^{-1} y_k)\to\bar f((\mathbf 1_T,-(f_\alpha\circ\rho_\alpha)(\tau,y)),\tau y)+(f_Y-f_\alpha\circ\rho_\alpha)(\tau,y)
\end{equation*}
as $k\to\infty$, and this limit coincides with $\bar f((\tau,-(f_\alpha\circ\rho_\alpha)(\tau,y)),y)=f'((\tau,0),y)$ due to the cocycle identity for $\bar f((\tau,t),y)$.
Since $f_Y(\tau,y)$ is relatively trivial with respect to $(f_\alpha\circ\rho_\alpha)(\tau,y)$, for every $\varepsilon>0$ there exists a $\delta>0$ so that for every $(\tau',y')\in T\times Y$ with $d_Y(y',\tau' y')<\delta$ and $|(f_\alpha\circ\rho_\alpha)(\tau',y')|<\delta$ holds $|(f_Y-f_\alpha\circ\rho_\alpha)(\tau',y')|<\varepsilon$.
From $\tau^{-1} y_k\to y$, $\tau_k y_k\to L_\tau y$, and $(f_\alpha\circ\rho_\alpha)(\tau_k\tau,\tau^{-1} y_k)\to 0$ follows for every $(\tau,y)\in T\times Y$ with $d_Y(y,L_\tau(y))<\delta$ that $f'((\tau,0),y)<\varepsilon$.
Fact \ref{fact:GH} implies that the cocycle $(\tau,y)\mapsto f'((\tau,0),y)$ of the distal flow $(Y,\{L_\tau:\tau\in T\})$ is a coboundary on the $\{L_\tau:\tau\in T\}$-orbit closure $X_\alpha\times\{m\}$ with transfer function $b_m:X_\alpha\longrightarrow\R$ for every $m\in M$.
Since $\delta>0$ is valid for all $\{L_\tau:\tau\in T\}$-orbit closures, the transfer functions $\{b_m:m\in M\}$ are uniformly equicontinuous.
We fix a point $\bar x\in X_\alpha$ and obtain from the cocycle identity for all $(\tau,t)\in T\times\R$ and $(x_\alpha,m)\in Y$ that
\begin{eqnarray*}
f_{\bar x}((\tau,t),(x_\alpha,m))=f'((\tau,t),(x_\alpha,m))-f'((\mathbf 1_T,t),(\bar x,m))=\\
=b_{\phi^t(m)}(\tau x_\alpha)-b_{\phi^t(m)}(\bar x)-b_{m}(\tau x_\alpha)+b_m(\bar x) .
\end{eqnarray*}
The function $f_{\bar x}((\tau,t),(x_\alpha,m))$ is also a cocycle of $(Y,\{\psi^t\circ L_\tau:(\tau,t)\in T\times\R\})$ and bounded on $T\times\R\times Y$, hence a coboundary with a transfer function $\bar b:Y\longrightarrow\R$ so that $\bar b(\rho_\alpha^{-1}(\bar x))=\{0\}$.
Now equality (\ref{eq:tf}) follows, and equality (\ref{eq:tF}) follows from $f'((\mathbf 1_T,t),(x_\alpha,m))=\bar f((\mathbf 1_T,t),(x_\alpha,m))$ for all $t\in\R$ and $(x_\alpha,m)\in Y$.
\end{proof}

With these prerequisites we can conclude the proof of our main result.

\begin{proof}[Proof of the structure theorem]
We let all  elements of the theorem and the flow $\{\psi^t:t\in\R\}\subset\textup{Aut}(Y,\{L_\tau:\tau\in T\})\cap\textup{Aut}(Y,T)$ be defined according to the Propositions \ref{prop:max}, \ref{prop:flow}, \ref{prop:res}, and \ref{prop:inc}.
We fix a point $\bar x\in X_\alpha$ so that $(\bar x,0)$ is transitive for $\widetilde\tau_{f_\alpha}$ and $\bar b(\rho_\alpha^{-1}(\bar x))=\{0\}$.
Then we define a cocycle $g(t,m)$ of the distal minimal flow $(M,\{\phi^t:t\in\R\})$ by
\begin{equation*}
g(t,m)=\bar f((\mathbf 1_T,t),(\bar x,m)) \quad\textup{for all}\enspace (t,m)\in\R\times M.
\end{equation*}
From equalities (\ref{eq:tf}) and (\ref{eq:tF}) follows for all $\tau\in T$ and $(x_\alpha,m)\in Y$ that
\begin{eqnarray*}
(f_Y-f_\alpha\circ\rho_\alpha)(\tau,(x_\alpha,m))=\bar f((\tau,0),(x_\alpha,m))=\hspace{3cm}\\
=\bar f((\mathbf 1_T,f_\alpha(\tau,x_\alpha)),L_\tau (x_\alpha,m))+\bar b(L_\tau (x_\alpha,m))-\bar b(x_\alpha,m)=\\
=g(f_\alpha(\tau,x_\alpha),m)+\bar b\circ \tau(x_\alpha,m)-\bar b(x_\alpha,m) .
\end{eqnarray*}
Hence equality (\ref{eq:f_Y2}) holds for the cocycle $f_Y(\tau,y)-\bar b(\tau y)+\bar b(y)$ cohomologous to $f_Y(\tau,y)$, and this cocycle will be substituted for $f_Y(\tau,y)$ henceforth.
For every sequence $\{(\tau_k,x_k)\}_{k\geq 1}\subset T\times X$ with $(f_\alpha\circ\pi_\alpha)(\tau_k,x_k)\to 0$ it holds also that $(f_Y\circ\pi_Y)(\tau_k,x_k)\to 0$ as $k\to\infty$, and thus identity (\ref{eq:p_X}) implies identity (\ref{eq:fpy}).

For every ordinal $\xi$ with $\alpha\leq\xi\leq\eta$ we can apply the Propositions \ref{prop:flow}, \ref{prop:res}, and \ref{prop:inc} to the distal minimal flow $(X_\xi,T)$ and the cocycle $f_\xi(\tau,x_\xi)$.
We obtain a factor $(Y_\xi,T)=\pi_{Y_\xi}(X_\xi,T)$ with $(X_\alpha,T)=\rho_\alpha^\xi(Y_\xi,T)$, an $\R$-flow $\{\psi_\xi^t:t\in\R\}\subset\textup{Aut}(Y_\xi,T)$, a cocycle $f_{Y_\xi}(\tau,y_\xi)$ of $(X_\xi,T)$, and a cocycle $\bar f_\xi((\tau,t),y_\xi)$ of the flow $(Y_\xi,T\times\R)$ extending the cocycle $(f_{Y_\xi}-f_\alpha\circ\rho^\xi_\alpha)(\tau,y_\xi)$.
Striving for a contradiction to the maximality of the ordinal $\alpha$ (cf. Proposition \ref{prop:max}), we assume that the cocycle $(\mathbbm 1+g)(t,m)$ of the minimal flow $(M,\{\phi^t:t\in\R\})$ is recurrent so that the cocycle $\bar f_\eta((\mathbf 1_T,t),y_\eta)+t$ of the minimal flow $((\rho^\eta_\alpha)^{-1}(\bar x),\{\psi_\eta^t:t\in\R\})$ is also recurrent.
We let $\beta$ be the minimal element of the non-empty set of ordinals
\begin{equation*}
\{\alpha\leq\xi\leq\eta: \bar f_\xi((\mathbf 1_T,t),y_\xi)+t\enspace\textup{is a recurrent cocycle of}\enspace ((\rho_\alpha^\xi)^{-1}(\bar x),\{\psi_\xi^t:t\in\R\})\} ,
\end{equation*}
with $\beta>\alpha$ since $f_{Y_\alpha}=f_\alpha$ and $\bar f_\alpha((\mathbf 1_T,t),y_\alpha)\equiv 0$.
We fix a point $\bar x_\beta\in(\pi_\alpha^\beta)^{-1}(\bar x)$ so that $(\pi_{Y_\beta}(\bar x_\beta),0)$ is a recurrent point for the skew product extension of the flow $((\rho_\alpha^\beta)^{-1}(\bar x),\{\psi_\beta^t:t\in\R\})$ by the cocycle $\bar f_\beta((\mathbf 1_T,t),y_\beta)+t$.
Then there exists a sequence $\{\bar\tau_k\}_{k\geq 1}\subset T$ with $f_\alpha(\bar\tau_k,\pi_\alpha^\beta(\bar x_\beta))\to\infty$ so that
\begin{equation*}
\bar f_\beta((\mathbf 1_T,f_\alpha(\bar\tau_k,\pi_\alpha^\beta(\bar x_\beta))),\pi_{Y_\beta}(\bar x_\beta))+f_\alpha(\bar\tau_k,\pi_\alpha^\beta(\bar x_\beta))=f_{Y_\beta}(\bar\tau_k,\pi_{Y_\beta}(\bar x_\beta))\to 0
\end{equation*}
for $k\to\infty$, and by the cohomology of the cocycles $f_\beta(\tau,x_\beta)$ and $(f_{Y_\beta}\circ\pi_{Y_\beta})(\tau,x_\beta)$ the sequence $f_\beta(\bar\tau_k,\bar x_\beta)$ is bounded.
Hence there exists a sequence $\{k_l\}_{l\geq 1}\subset\N$ so that $f_\alpha(\bar\tau_{k_{l+1}},\pi_\alpha^\beta(\bar\tau_{k_l}\bar x_\beta))\to\infty$, $d_\beta(\bar\tau_{k_{l+1}} \bar x_\beta,\bar\tau_{k_l}\bar x_\beta)\to 0$, and $f_\beta(\bar\tau_{k_l},\bar x_\beta)$ is convergent.
Then the sequence $\{(\tau_l,x_l)=(\bar\tau_{k_{l+1}}(\bar\tau_{k_l})^{-1},\bar\tau_{k_l}\bar x_\beta)\}_{l\geq 1}\subset T\times X_\beta$ fulfils $f_\alpha(\tau_l,\pi_\alpha^\beta(x_l))\to\infty$, $d_\beta(x_l,\tau_l x_l)\to 0$, and $f_\beta(\tau_l,x_l)\to 0$ for $l\to\infty$.
However, for every $\alpha\leq\xi<\beta$ holds
\begin{equation*}
\bar f_\xi((\mathbf 1_T,f_\alpha(\tau_l,\pi_\alpha^\beta(x_l))),\pi_{Y_\xi}\circ\pi_\xi^\beta(x_l))+f_\alpha(\tau_l,\pi_\alpha^\beta(x_l))=f_{Y_\xi}(\tau_l,\pi_{Y_\xi}\circ\pi_\xi^\beta(x_l))\to\infty
\end{equation*}
for $l\to\infty$.
Otherwise, since $|\bar f_\xi((\mathbf 1_T,t),(x_\alpha,m_\xi))-\bar f_\xi((\mathbf 1_T,t),(\bar x,m_\xi))|$ is uniformly bounded for all $t\in\R$, $x_\alpha\in X_\alpha$, and $m_\xi\in M_\xi$ (cf. identity (\ref{eq:tF})), there exists a non-trivial prolongation in the skew product of the minimal flow $((\rho_\alpha^\xi)^{-1}(\bar x),\{\psi_\xi^t:t\in\R\})$ and its cocycle $\bar f_\xi((\mathbf 1_T,t),y_\xi)+t$, which sufficient for its recurrence (cf. Lemma \ref{lem:tr_coc}).
Therefore also $f_\xi(\tau_l,\pi_\xi^\beta(x_l))\to\infty$ as $l\to\infty$, and depending on the type of the ordinal $\beta$ follows either from Lemma \ref{lem:c_t} or Lemma \ref{lem:lim} that $\widetilde\tau_{f_\beta}$ is point transitive, in contradiction to the maximality of $\alpha$.
\end{proof}

\begin{proof}[Proof of the structure of the topological Mackey action]
In the proof of the decomposition theorem it is verified that the topological Mackey actions for the cocycle $f(\tau,x)$ of $(X,T)$ and the transient cocycle $(\mathbbm 1+g)(t,m)$ of $(M,\{\phi^t:t\in\R\})$ are topologically isomorphic.
Let $\{(M_\xi,\{\phi_\xi^t:t\in\R\}):0\leq\xi\leq\theta\}$ be the normal \emph{I}-system for the distal minimal compact metric flow $(M,\{\phi^t:t\in\R\})$ with the homomorphisms $\sigma_\xi:M\longrightarrow M_\xi$.
For every ordinal $0\leq\xi\leq\theta$ a cocycle $g_\xi(t,m_\xi)$ of $(M_\xi,\{\phi_\xi^t:t\in\R\})$ is defined by a \emph {RIM}.
Let $\beta$ be the minimal element of the set
\begin{equation*}
\theta\in\{0\leq\xi\leq\theta:(g-g_\xi\circ\sigma_\xi)(t,m)\enspace\textup{is a coboundary of}\enspace(M,\{\phi^t:t\in\R\})\} .
\end{equation*}
The cocycle $(\mathbbm 1+g_\beta)(t,m_\beta)$ is transient, since the cocycle $(\mathbbm 1+g)(t,m)$ cohomologous to $(\mathbbm 1+g_\beta\circ\sigma_\beta)(t,m)$ is transient.
By Lemma \ref{lem:tr_coc} the right translation action $\{R_b:b\in\R\}$ acts minimally on the Fell compact space $D$ of orbits in $M_\beta\times\R$.
The mapping $\chi:M_\beta\longrightarrow D$ defined by $m_\beta\mapsto\mc O_{\phi_\beta,(\mathbbm 1+g_\beta)}(m_\beta,0)$ is Fell continuous, and for every $t\in\R$ holds $\chi\circ\phi_\beta^t(m_\beta)=R_{(\mathbbm 1+g_\beta)(t,m_\beta)}\circ\chi(m_\beta)$.
For $\beta=0$ the flow $(D,\{R_b:b\in\R\})$ is trivial and thus weakly mixing.
If $\beta\geq 1$, then $(D,\{R_b:b\in\R\})$ is a non-trivial minimal compact metric flow.
If it is not weakly mixing, then there exists a non-trivial equicontinuous factor $(D_1,\{\varphi^t:t\in\R\})=\nu(D,\{R_b:b\in\R\})$ with homomorphism $\nu$ (cf. \cite{KeRo}).
We shall use a generalised and relativised version of Theorem 1 in \cite{Eg} to obtain a contradiction to the minimality of $\beta$.
Since $(D_1,\{\varphi^t:t\in\R\})$ is a minimal and non-trivial flow, for each small enough $\varepsilon>0$ holds $\varphi^\varepsilon(d_1)\neq d_1$ for all $d_1\in D_1$.
We shall verify as a sub-lemma that there are no sequences $\{t_k\}_{k\geq 1}\subset\R$, $\{m_k\}_{k\geq 1},\{m'_k\}_{k\geq 1}\subset M_\beta$ so that $m_k\to\bar m$, $m'_k\to\bar m$, $\phi_\beta^{t_k}(m_k)\to\bar m'$, $\phi_\beta^{t_k}(m'_k)\to\bar m'$, and $g_\beta(t_k,m'_k)-g_\beta(t_k,m_k)\to\varepsilon$.
Indeed,
\begin{gather*}
\nu\circ\chi\circ\phi_\beta^{t_k}(m'_k)=\varphi^{(\mathbbm 1+g_\beta)(t_k,m'_k)}\circ\nu\circ\chi(m'_k)\to\varphi^{\varepsilon}(\lim \varphi^{(\mathbbm 1+g_\beta)(t_k,m_k)}\circ\nu\circ\chi(m'_k))
\end{gather*}
and $\lim\varphi^{(\mathbbm 1+g_\beta)(t_k,m_k)}\circ\nu\circ\chi(m'_k)=\lim\varphi^{(\mathbbm 1+g_\beta)(t_k,m_k)}\circ\nu\circ\chi(m_k)=\nu\circ\chi(\bar m')$, by the equicontinuity of $(D_1,\{\varphi^t:t\in\R\})$, imply that $\nu\circ\chi(\bar m')=\varphi^{\varepsilon}\circ\nu\circ\chi(\bar m')$, which contradicts to the choice of $\varepsilon$.

If $\beta=\gamma+1$ for some ordinal $0\leq\gamma<\theta$, then the sub-lemma implies the uniform equicontinuity of the mapping $m_\beta\mapsto g_\beta(t,m_\beta)$ restricted on the $\sigma_\gamma^\beta$-fibres and for all $t\in\R$.
Indeed, otherwise we can find sequences as above with $\sigma_\gamma^\beta(m_k)=\sigma_\gamma^\beta(m'_k)$ for all $k\geq 1$, and the condition on sufficiently small $\varepsilon$ can be fulfilled by the connectedness of the $\sigma_\gamma^\beta$-fibres.
Thus for all $t\in\R$ the cocycle $(g_\beta-g_\gamma\circ\sigma_\gamma^\beta)(t,m_\beta)$ is uniformly equicontinuous and assumes zero on every connected $\sigma_\gamma^\beta$-fibre.
By Fact \ref{fact:GH} this is then a coboundary, in contradiction to the minimality of $\beta$.

If $\beta$ is a limit ordinal, then the sub-lemma applies to sequences so that for every ordinal $0\leq\xi<\beta$ there exists an integer $k_\xi\geq 1$ with $\sigma_\xi^\beta(m_k)=\sigma_\xi^\beta(m'_k)$ for all $k\geq k_\xi$.
Hence there exists an ordinal $0\leq\zeta<\beta$ so that $|g_\beta(t,m_\beta)-g_\beta(t,m'_\beta)|<\varepsilon$ for all $m_\beta,m'_\beta\in M_\beta$ with $\sigma_\zeta^\beta(m_\beta)=\sigma_\zeta^\beta(m'_\beta)$ and for all $t\in\R$.
It follows that $(g_\beta-g_\zeta\circ\sigma_\zeta^\beta)(t,m_\beta)$ is a coboundary, in contradiction to the minimality of $\beta$.

The topological Mackey action of the transient cocycle $(\mathbbm 1+g)(t,m)$ is topologically isomorphic to the topological Mackey action of the cohomologous cocycle $(\mathbbm 1+g_\beta\circ\sigma_\beta)(t,m)$ (cf. the proof of  the decomposition theorem).
The weakly mixing flow $(D,\{R_b:b\in\R\})$ is a factor of the topological Mackey action of the cocycle $(\mathbbm 1+g_\beta\circ\sigma_\beta)(t,m)$, since for every $(m,s)\in M\times\R$ the mapping $\sigma_\beta\times\textup{id}_\R$ maps the orbit $\mc O_{\phi,(\mathbbm 1+g_\beta\circ\sigma_\beta)}(m,s)$ in $M\times\R$ to the orbit $\mc O_{\phi_\beta,(\mathbbm 1+g_\beta)}(\sigma_\beta(m),s)$ in $M_\beta\times\R$ continuously with respect to the Fell topologies.
Suppose that there exists two distinct orbits $\mc O, \mc O'$ in $M\times\R$ within the same $\sigma_\beta\times\textup{id}_\R$-fibre and $\{t_k\}_{k\geq 1}\subset\R$ is a sequence with $R_{t_k}\mc O\to \mc O''$ and $R_{t_k}\mc O'\to \mc O''$.
Since the mapping $t\mapsto (\mathbbm 1+g_\beta)(t,m_\beta)$ is onto $\R$ for every $m_\beta\in M_\beta$ (cf. Lemma \ref{lem:tr_coc}), there exists a point $\bar m\in M_\beta$ and distinct $m,m'\in\sigma_\beta^{-1}(\bar m)$ so that $(m,0)\in\mc O$ and $(m',0)\in\mc O'$.
Moreover, for every integer $k\geq 1$ we can select a real number $t'_k$ so that $(\mathbbm 1+g_\beta)(t'_k,\bar m)=t_k$, and therefore $(\phi^{t'_k}(m),0)\in R_{t_k}\mc O$ as well as $(\phi^{t'_k}(m'),0)\in R_{t_k}\mc O'$.
By changing to a subsequence we can suppose that $\phi^{t'_k}(m)\to m_1$ and $\phi^{t'_k}(m')\to m_2$ as $k\to\infty$ with $m_1\neq m_2$, by the distality of the flow $(M,\{\phi^t:t\in\R\})$.
However, since $(m_1,0),(m_2,0)\in\mc O''$, the point $(\bar m,0)$ is a periodic point in $(D,\{R_b:b\in\R\})$ in contradiction to the transience of the cocycle $(\mathbbm 1+g_\beta)(t,m_\beta)$.
We can conclude that $\sigma_\beta\times\textup{id}_\R$ is a distal homomorphism of the topological Mackey action of the cocycle $(\mathbbm 1+g_\beta\circ\sigma_\beta)(t,m)$ onto the weakly mixing flow $(D,\{R_b:b\in\R\})$.
\end{proof}

\textbf{Acknowledgement}: The author would like to thank Professor Jon Aaronson and Professor Eli Glasner for useful discussions and encouragement.

\end{document}